\newtheorem*{rep@theorem}{\rep@title}
\newcommand{\newreptheorem}[2]{%
\newenvironment{rep#1}[1]{%
 \def\rep@title{#2 \ref{##1}}%
 \begin{rep@theorem}}%
 {\end{rep@theorem}}}
\DeclareMathOperator{\Aut}{Aut}
\DeclareMathOperator{\PSL}{PSL}
\DeclareMathOperator{\Cay}{Cay}
\DeclareMathOperator{\Isom}{Isom}
\DeclareMathOperator{\BS}{BS}
\newcommand{\Af}{\mathbf{A}}
\newcommand{\Ag}{A}
\newcommand{\Bf}{\mathbf{B}}
\newcommand{\Bg}{B}
\newcommand{\Cg}{C}
\newcommand{\Dg}{D}
\newcommand{\NN}{\mathbf{N}}
\newcommand{\ZZ}{\mathbf{Z}}
\newcommand{\RR}{\mathbf{R}}
\newcommand{\CC}{\mathbf{C}}
\newcommand{\QQ}{\mathbf{Q}}
\newcommand{\Trivial}{\mathbf{1}}
\newcommand{\Tree}{\mathcal{T}}
\newcommand{\injnearrow}{\mathrel{\scalebox{1.2}{\rotatebox[origin=c]{45}{$\hookrightarrow$}}}}
\newcommand{\injnwarrow}{\mathrel{\scalebox{1.2}{\rotatebox[origin=c]{-45}{$\hookleftarrow$}}}}
\newcommand{\surjnearrow}{\mathrel{\scalebox{1.2}{\rotatebox[origin=c]{45}{$\twoheadrightarrow$}}}}
\newcommand{\surjnwarrow}{\mathrel{\scalebox{1.2}{\rotatebox[origin=c]{-45}{$\twoheadleftarrow$}}}}
\newtheorem{theorem}{Theorem}[section]
\newtheorem{alphtheorem}{Theorem}
\newtheorem{lemma}[theorem]{Lemma}
\newtheorem{prop}[theorem]{Proposition}
\newtheorem{corollary}[theorem]{Corollary}
\newtheorem{alphcorollary}[alphtheorem]{Corollary}
\theoremstyle{remark}
\newtheorem{step}{Step}
\newtheorem{remark}[theorem]{Remark}
\newtheorem{examples}[theorem]{Examples}
\newtheorem{notation}[theorem]{Notation}
\newtheorem{definition}[theorem]{Definition}
\title{Commability of groups quasi-isometric to trees}
\author{Mathieu Carette}
\address{\tt Universit\'e catholique de Louvain, 
IRMP,
Chemin du Cyclotron 2, bte L7.01.01, 
1348 Louvain-la-Neuve,
Belgium}
\email{\tt mathieu.carette@uclouvain.be}
\thanks{The author is a Postdoctoral Researcher of the F.R.S.-FNRS (Belgium).}
\subjclass[2010]{22D05, 20F65, 20E08, 20E42}
\keywords{Commability, groups acting on trees, quasi-isometric rigidity} 
\begin{document}
	
	\begin{abstract}
		Commability is the finest equivalence relation between locally compact groups such that $G$ and $H$ are equivalent whenever there is a continuous proper homomorphism $G \to H$ with cocompact image. Answering a question of Cornulier, we show that all non-elementary locally compact groups acting geometrically on locally finite simplicial trees are commable, thereby strengthening previous forms of quasi-isometric rigidity for trees. We further show that 6 homomorphisms always suffice, and provide the first example of a pair of locally compact groups which are commable but without commation consisting of less than 6 homomorphisms. Our strong quasi-isometric rigidity also applies to products of symmetric spaces and Euclidean buildings, possibly with some factors being trees.
	\end{abstract}
		
	\maketitle
	
	\section{Results}
		
	Commability between locally compact (l.c.) groups was introduced by Cornulier in his systematic study of quasi-isometry classes of focal hyperbolic groups. Two l.c.\ groups $G, H$ are \textbf{commable} if there is a sequence of l.c.\ groups (called a \textbf{commation}) $G = G_0 - G_1 - G_2 - \ldots - G_n = H$ where a dash $G_i - G_{i+1}$ indicates there is a continuous, proper homomorphism with (closed) cocompact image (or \textbf{copci} homomorphism for short) between them, pointing in either direction. The \textbf{commability distance} (or simply distance) between two commable l.c.\ groups $G$ and $H$ is the smallest $n$ such that there is a commation as above. In order to improve readability, we write e.g. $G \nearrow \nwarrow \nearrow H$ to mean a commation $G \rightarrow G_1 \leftarrow G_2 \rightarrow H$.
	
	Commability between compactly generated l.c.\ groups implies quasi-isometry. A natural questions promptly arises: does the converse hold for specific classes of groups? Positive examples are provided by the work of Kleiner-Leeb (see Theorem~\ref{theorem:Kleiner-Leeb} below when no factor is a tree). On the other hand the author and Tessera \cite{CaretteTessera13} provide examples of groups which are quasi-isometric but not commable, such as the free products $\Gamma_1 \ast \ZZ$ and $\Gamma_2 \ast \ZZ$ where the $\Gamma_i$ are non-commensurable cocompact lattices in $\PSL_2(\CC)$. Cornulier conjectured {\cite[Conjecture 6.A.1]{Cornulier12}} that if $G$ is a focal l.c.\ group and $H$ is a compactly generated l.c.\ group quasi-isometric to $G$ then $G$ is commable to $H$. In the case where $G$ is a totally disconnected focal group, it acts geometrically on a regular tree.
	
	The purpose of the present paper is a detailed study of the commability relation among groups which are quasi-isometric to trees. More precisely, denote by $\Tree$ the class of all non-elementary locally compact groups acting geometrically on a locally finite tree (e.g. a non-abelian finitely generated free group, $\PSL_2(\ZZ)$, $\PSL_2(\QQ_p)$, the automorphism group of a $d$-regular tree for $d\geq 3$, \ldots). Groups in $\Tree$ form a single quasi-isometry class and in particular $\Tree$ is closed under commability. 
	
	Cornulier asked in \cite[Question 5.E.7]{Cornulier12} and \cite[Question 7.7]{Cornulier13} whether all groups in $\Tree$ are commable to each other. We provide a positive answer, thereby solving Cornulier's Conjecture for focal groups of totally disconnected type.
	\begin{alphtheorem} \label{theorem:upper_bounds} There exists a group $G_0 \in \Tree$ such that for any $G, G' \in \Tree$ and any unimodular group $U \in \Tree$, there are commations 
	\begin{enumerate}
		\item $G \nearrow\nwarrow\nearrow\nwarrow G_0$; \label{item:upper_bound_radius}
		\item $G \nearrow\nwarrow\nearrow\nwarrow\nearrow U$; \label{item:upper_bound_unimod}
		\item $G \nearrow\nwarrow\nearrow\nwarrow\nearrow\nwarrow G'$. \label{item:upper_bound_diameter}
	\end{enumerate}
	\end{alphtheorem}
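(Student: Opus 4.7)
\medskip

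\noindent\textbf{Proof proposal.} The plan is to exhibit a universal ``hub'' $G_0 \in \Tree$ and to connect every $G \in \Tree$ to $G_0$ by a short chain of copci maps. A natural candidate is $G_0 = \Aut(T_d)$ for a fixed regular tree of valence $d \geq 3$ (or a suitable cocompact lattice therein). What matters is that $G_0$ acts on a tree with enough combinatorial flexibility to absorb the actions of the other groups in $\Tree$.

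The first elementary move I would rely on is this: for any $G \in \Tree$ acting geometrically on its locally finite tree $T_G$, the tautological map $G \to \Aut(T_G)$, with $\Aut(T_G)$ given the topology of pointwise convergence, is continuous, proper and has closed cocompact image, hence copci; this supplies the arrow $G \nearrow \Aut(T_G)$ at the start of every commation I would construct. The three remaining arrows must link $\Aut(T_G)$ to $G_0$. To produce them I would modify $T_G$ into a $d$-regular tree through two Bass--Serre moves on the quotient graph of groups $G \backslash T_G$: subdividing edges (which equalizes edge-valences) and pulling back along a finite covering (which equalizes vertex-valences). Each such move realizes $\Aut(T_G)$ and $\Aut(T_d)$ as the two sides of a canonical span $\Aut(T_G) \nwarrow H \nearrow \Aut(T_d) = G_0$, where $H$ is the subgroup of automorphisms of the intermediate tree preserving the extra data (covering or subdivision). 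Concatenating with the first arrow, and adding one further equalization step if parity of valences forces it, gives item~(\ref{item:upper_bound_radius}).

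For~(\ref{item:upper_bound_unimod}) I would exploit that unimodularity of $U \in \Tree$ translates into a balanced edge-index datum on the quotient graph of groups $U \backslash T_U$; such data can be realized inside $G_0$ once $d$ is large enough, providing one additional copci arrow $\cdot \to U$ obtained as a quotient by a compact normal subgroup. Appending this arrow to the commation of~(\ref{item:upper_bound_radius}) extends it to length~$5$ ending with $\nearrow U$. For~(\ref{item:upper_bound_diameter}), naive concatenation of two commations through $G_0$ produces length~$8$, so the length-$6$ bound is a genuinely separate statement; I would prove it by building a common biregular tree $T_{m,n}$ directly covering both $T_G$ and $T_{G'}$ (after finite-index and finite-quotient modifications of $G$ and $G'$), and placing $\Aut(T_{m,n})$ near the middle of the commation so that each of $G$ and $G'$ reaches it through three arrows.

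The main obstacle is the construction of the common cover in the argument for~(\ref{item:upper_bound_radius}): controlling valences so that both the projection to $T_G$ and the projection to $T_d$ induce copci maps of the associated automorphism groups, while keeping every intermediate group non-elementary and acting cocompactly on a locally finite tree. The bounds $4$, $5$ and $6$ in the three parts encode how many subdivision and covering moves are forced by this valence juggling. A more structural difficulty, relevant for~(\ref{item:upper_bound_diameter}), is that copci maps do not compose into shorter spans in any formal way, so the length-$6$ bound requires a construction that does not simply factor through~$G_0$.
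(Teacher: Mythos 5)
Your overall architecture (first arrow $G \nearrow \Aut(T_G)$, then edge-indexed graph moves to reach a regular tree, Bass--Kulkarni for the unimodular endgame, and a genuinely separate middle-meeting construction for the diameter bound) matches the paper's, and your instinct for item~(3) --- arranging a common tree in the middle rather than concatenating through the hub --- is exactly right: the paper passes to an $m'$-sheeted cover of $G$'s graph and an $m$-sheeted cover of $G'$'s graph so that both collapse onto a single-vertex graph with universal cover $T_{mm'+1}$, yielding $G \nearrow\nwarrow\nearrow \Aut(T_{mm'+1}) \nwarrow\nearrow\nwarrow G'$.

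However, your choice of hub $G_0 = \Aut(T_d)$ for a fixed $d$ is not a repairable detail but a provably impossible one: Theorem~\ref{theorem:lower_bounds}\eqref{item:lower_bound_unimod} (via Proposition~\ref{prop:bound_on_degree} and Corollary~\ref{corollary:lower_bound_unimod}) shows that around \emph{every} unimodular group there are members of $\Tree$ at commability distance at least $5$, so no unimodular group --- in particular no $\Aut(T_d)$ and no lattice therein --- can be a radius-$4$ center. The underlying difficulty, which your valence-juggling argument never confronts, is that the degree $n$ of the regular tree reachable from $G$ in three arrows depends on $G$ and cannot be prescribed in advance (the paper states explicitly that $n$ cannot be fixed a priori); a unimodular hub acts minimally only on trees of bounded degree, so it cannot absorb all these $\Aut(T_n)$. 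The missing idea is to take the hub \emph{non-unimodular} and degree-flexible: the paper uses $G_{2,4}$, the closure of $\BS(2,4)$ in the automorphism group of its Bass--Serre tree, which by a chain of expansions and collapses acts geometrically on the regular tree of degree $2^k+3$ for every $k\geq 0$. The proof of item~(1) then steers the blow-up/slide moves on $G$'s edge-indexed graph so that the resulting regular tree has degree exactly $2^{i_0}+3$, after which the single arrow $\Aut(T_{2^{i_0}+3}) \nwarrow G_{2,4}$ closes the commation. A smaller inaccuracy: in item~(2) the passage from $\Aut(T_n)$ to $U$ is not one quotient arrow but the two arrows $\Aut(T_n) \nwarrow \Gamma \nearrow U$ through a common (virtually free, hence commensurable) cocompact lattice supplied by Bass--Kulkarni.
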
	
	\begin{corollary}\label{corollary:tree_commable} Any two groups in $\Tree$ are commable.
	\end{corollary}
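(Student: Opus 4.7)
The corollary is an essentially formal consequence of Theorem~\ref{theorem:upper_bounds}. The plan is simply to invoke item~(\ref{item:upper_bound_diameter}) of that theorem: for any $G, G' \in \Tree$, it directly produces a length-$6$ commation
\[
G \nearrow \nwarrow \nearrow \nwarrow \nearrow \nwarrow G',
\]
which by the very definition of commability certifies that $G$ and $G'$ are commable.

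Alternatively, one could use the weaker item~(\ref{item:upper_bound_radius}): given $G, G' \in \Tree$ and the common reference group $G_0$ supplied there, one has commations $G \nearrow \nwarrow \nearrow \nwarrow G_0$ and $G' \nearrow \nwarrow \nearrow \nwarrow G_0$. Reading the second sequence in reverse (the copci maps themselves are unchanged; only the order of traversal is) and concatenating at $G_0$ yields the length-$8$ commation
\[
G \nearrow \nwarrow \nearrow \nwarrow G_0 \nearrow \nwarrow \nearrow \nwarrow G',
\]
which already suffices to establish the corollary, albeit with a suboptimal bound. Item~(\ref{item:upper_bound_unimod}) similarly offers a length-$10$ route through any fixed unimodular $U \in \Tree$.

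There is no real obstacle in the deduction itself: once Theorem~\ref{theorem:upper_bounds} is available in any one of its three forms, commability of every pair in $\Tree$ is automatic. All the substance of the corollary lies in the theorem, and specifically item~(\ref{item:upper_bound_diameter}) is the statement to cite in order to obtain both commability and the sharp bound $6$ on the commability distance simultaneously.
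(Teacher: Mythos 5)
Your deduction is correct and matches the paper, which states Corollary~\ref{corollary:tree_commable} as an immediate consequence of Theorem~\ref{theorem:upper_bounds}: item~(\ref{item:upper_bound_diameter}) exhibits an explicit commation between any $G, G' \in \Tree$, which is exactly the definition of commability. The alternative routes via items~(\ref{item:upper_bound_radius}) and~(\ref{item:upper_bound_unimod}) are also valid (commations are reversible since a dash allows the copci homomorphism to point in either direction), just with longer chains.
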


	Combining Corollary~\ref{corollary:tree_commable} with the fact that $\Tree$ is closed under quasi-isometry, we obtain a stronger, algebraic form of quasi-isometric rigidity. 
	\begin{corollary}\label{corollary:tree_strong_QI} If $G, H$ are compactly generated l.c.\ groups quasi-isometric to a locally finite bushy tree, then $G$ and $H$ are commable.
	\end{corollary}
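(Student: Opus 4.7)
The plan is to reduce Corollary \ref{corollary:tree_strong_QI} directly to Corollary \ref{corollary:tree_commable} by showing that $G$ and $H$ themselves belong to the class $\Tree$. Once this reduction is made, the previous corollary provides a commation between them, and we are done.

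The required input is a quasi-isometric rigidity statement for bushy trees in the locally compact setting: any compactly generated locally compact group $G$ that is quasi-isometric to a locally finite bushy tree admits a continuous, proper, cocompact action on some locally finite simplicial tree $T'$. For finitely generated discrete groups this is the classical theorem of Mosher--Sageev--Whyte; the same strategy extends to compactly generated locally compact groups. Concretely, a quasi-isometry $G \to T$ induces a cobounded quasi-action of $G$ on the bushy tree $T$, which can be straightened, using bushyness, to an honest isometric action on some locally finite tree $T'$; properness and continuity of this action are inherited from the quasi-action once the local compactness of $G$ is used.

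Bushyness (the tree has more than two ends) guarantees that the resulting action on $T'$ is non-elementary, i.e.\ fixes no vertex, no edge, and no pair of ends of $T'$. Hence $G \in \Tree$, and the same argument gives $H \in \Tree$. Applying Corollary \ref{corollary:tree_commable} yields a commation $G - \cdots - H$, completing the proof.

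The only potentially delicate step is the rigidity input in the locally compact generality, since the discrete Mosher--Sageev--Whyte theorem is not directly applicable to arbitrary compactly generated l.c.\ groups. However, this extension is by now standard and is implicit in the very setup of the class $\Tree$ used throughout the paper, so once it is invoked the deduction is immediate.
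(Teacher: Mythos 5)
Your proposal is correct and matches the paper's argument: the corollary is deduced by noting that $\Tree$ is closed under quasi-isometry, so $G,H\in\Tree$, and then invoking Corollary~\ref{corollary:tree_commable}. The only cosmetic difference is that the paper obtains the rigidity input from the cited characterization of $\Tree$ ({\cite[Theorem 4.A.1]{Cornulier12}}, via Stallings--Abels and accessibility) rather than from a quasi-action straightening argument in the style of Mosher--Sageev--Whyte, but this is the same reduction.
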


	Note that the only discrete groups in $\Tree$ are finitely generated virtually \{free non-abelian\} and hence are all commensurable. More generally Corollary~\ref{corollary:tree_commable} is well-known for unimodular groups. Indeed we have the following result of Bass and Kulkarni
	\begin{theorem}[\cite{BassKulkarni90}]\label{theorem:BassKulkarni} Any unimodular group $U \in \Tree$ contains a cocompact lattice $\Gamma < U$.
	\end{theorem}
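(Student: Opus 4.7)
The plan is to produce a cocompact lattice as the fundamental group of a suitable graph of finite groups, using Bass--Serre theory together with the unimodularity hypothesis on $U$.

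Since $U \in \Tree$, it acts continuously, properly and cocompactly on a locally finite tree $T$. Passing to the barycentric subdivision if necessary, I may assume the action is without inversions, so that vertex and edge stabilizers are compact open subgroups of $U$ and the quotient graph $A = U \backslash T$ is finite. First I would encode the action by its edge-indexed graph in the sense of Bass: fix lifts of the vertices and edges of $A$ to $T$, take stabilizers $U_v, U_e$, and record the finite indices $i(e) = [U_{o(e)} : U_e]$. Unimodularity of $U$, via the modular function together with the standard covolume calculation for actions on trees, translates into Bass's combinatorial unimodularity on $A$: for every closed edge path $e_1 \cdots e_k$ in $A$, $\prod_j i(e_j)/i(\bar{e}_j) = 1$.

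The key step is to use this unimodularity to realise the edge-indexed data by a graph of finite groups. Concretely, I would look for positive integers $n_v, n_e$ with $n_v = i(e) n_e$ whenever $o(e) = v$; the cycle unimodularity above is precisely the cocycle condition that allows this system to be solved globally on the tree $A$, up to a common scaling factor which can be cleared by multiplying through. With such $(n_v, n_e)$ in hand, I would construct a graph of finite groups $(A, F_\bullet)$ whose vertex groups $F_v$ have order $n_v$ and whose edge groups $F_e \leq F_v$ have index $i(e)$; its fundamental group $\Gamma$ then acts on the same Bass--Serre tree $T$ with quotient $A$. To actually embed $\Gamma$ inside $U$, one realises the $F_v$ as quotients of $U_v$ by a sufficiently small common compact open normal subgroup, chosen so that the edge-group inclusions are respected.

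I expect the main obstacle to be making the choices of finite subgroups coherent across all of $A$: at each edge one must match data at the two endpoints. The natural approach is to fix a spanning tree of $A$, make the choices freely along it, and then verify that they close up at each remaining edge. The closure condition at each such edge is governed by precisely the cycle unimodularity computed above, so the hypothesis on $U$ enters at exactly the right point. Once $\Gamma$ is built, its discreteness in $U$ follows from the finiteness of its vertex stabilizers inside the compact $U_v$, while cocompactness reduces to the equality $\Gamma \backslash T = A = U \backslash T$ together with properness of both actions.
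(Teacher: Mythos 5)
The paper offers no proof of this statement: it is quoted directly from \cite{BassKulkarni90}, so your sketch can only be measured against the argument there. Its first half is correct and does match the beginning of that argument: unimodularity of $U$ is equivalent to combinatorial unimodularity of the quotient edge-indexed graph $(A,i)$, which is precisely the cocycle condition needed to solve $n_v = i(e)\,n_e$ in positive integers (solve along a spanning tree, check closure on the remaining edges, clear denominators), and taking, say, cyclic groups of these orders yields a finite grouping of $(A,i)$ whose fundamental group acts on the same universal covering tree $T$ and hence gives a uniform lattice in $\Aut(T)$.

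The gap is in the final step, where you try to put $\Gamma$ inside $U$. Realizing $F_v$ as a \emph{quotient} of $U_v$ produces no subgroup of $U$; and even when the local quotients are compatible with the edge maps, the induced homomorphism $U \to \Gamma$ has non-compact kernel (the normal closure of the subgroups you quotient by), so it is not a copci in either direction. If instead you intend $F_v$ to be a \emph{subgroup} of $U_v$ of order $n_v$, this is impossible in general: the vertex stabilizers of a group in $\Tree$ can be torsion-free, e.g. $U_v \cong \ZZ_n$ as in the procyclic groupings used throughout this paper, in which case every finite subgroup of $U$ is trivial, any cocompact lattice $\Gamma < U$ must be a free group acting freely on $T$, and its quotient graph is then a proper finite cover of $A$ rather than $A$ itself. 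So no grouping of $(A,i)$ by nontrivial finite groups can sit inside such a $U$, and the lattice cannot be produced over the graph $A$ with the orders $n_v$ you computed. The actual Bass--Kulkarni construction builds $\Gamma$ directly inside $U$: one uses the integers $n_v$ to manufacture a suitable finite cover of $(A,i)$ (equivalently, a covering of the quotient graph of compact open groups), then realizes it by choosing finitely many elements of $U$ that pair edges of a finite subtree of $T$ and verifying a local bijectivity condition, whence the subgroup they generate is discrete, acts with finite stabilizers, and is cocompact. Unimodularity enters exactly where you said it should, but the passage from the numerical data to an actual subgroup of $U$ is the substantive content of the theorem, and your sketch replaces it with a step that does not work.
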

	Such $\Gamma$ is virtually free, and it follows easily that given any two unimodular groups $U,U' \in \Tree$ there is a commation $U \nwarrow \nearrow U'$.

	However, commations in $\Tree$ may be significantly more complicated than when restricted to unimodular groups. Indeed, we provide examples showing that the distances in Theorem~\ref{theorem:upper_bounds} are optimal. In particular, we give the first examples of l.c.\ groups which are commable but at commability distance greater than $4$.
	\begin{alphtheorem} \label{theorem:lower_bounds}
		Let $G \in \Tree$ and let $U \in \Tree$ be unimodular. There are groups $H_1, H_2 \in \Tree$ such that:
		\begin{enumerate}
			\item $H_1$ is at commability distance at least $4$ from $G$. \label{item:lower_bound_radius}
			\item $H_1$ is at commability distance at least $5$ from $U$.\label{item:lower_bound_unimod}
			\item $H_1$ is at commability distance at least $6$ from $H_2$.\label{item:lower_bound_diameter}
		\end{enumerate}
	\end{alphtheorem}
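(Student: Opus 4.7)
The plan is to establish each lower bound by identifying an invariant of groups in $\Tree$ that changes in a controlled way along copci homomorphisms, and then constructing $H_1, H_2 \in \Tree$ whose invariants are too incompatible to be bridged by a short commation. The natural candidate is the modular function $\Delta_G$, which takes values in $\QQ^{\times}_{>0}$ for $G \in \Tree$ since $G$ acts cocompactly on a locally finite tree. Along a copci homomorphism $\varphi \colon G \to H$ in $\Tree$, one expects $\mathrm{Im}(\Delta_G)$ and $\mathrm{Im}(\Delta_H)$ to be comparable, with the direction of inclusion determined by the direction of the arrow, so that any commation $H_1 = G_0 - G_1 - \dots - G_n = H_2$ produces a zigzag chain of subgroups of $\QQ^{\times}_{>0}$.

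The constructions of $H_1, H_2$ would then be tailored to each part. For Part~(1), I would take $H_1$ non-unimodular (e.g.\ a $\BS(1,p)$-type HNN extension) with modular image chosen incompatible with that of $G$ in a way requiring at least four arrows to reconcile. For Part~(2), the unimodularity of $U$ forces $\mathrm{Im}(\Delta_U) = 1$, so any commation to $H_1$ must ``collapse'' the nontrivial image of $\Delta_{H_1}$ at some point, costing an extra arrow compared with Part~(1). For Part~(3), I would choose $H_2$ with modular data incomparable to that of $H_1$ (say generated by a distinct prime) so that bridging them forces the modular image first to shrink to a common base and then to grow in another direction, producing the full distance $6$.

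The main obstacle will be separating distance $5$ from distance $6$: a coarse invariant like $\mathrm{Im}(\Delta)$ alone may be absorbed into a large common overgroup, so one needs a finer, direction-sensitive invariant. This could be an asymmetric arithmetic invariant tracking the ``sign'' of prime exponents appearing in $\mathrm{Im}(\Delta_G)$, or structural data extracted from the Bass--Serre decomposition of $G$ on its tree (such as vertex-to-edge indices and the way they transform along copci arrows). Verifying that no shorter commation exists for each constructed pair is the crux: it requires a case analysis of the possible intermediate groups in $\Tree$ and how their tree actions interact, in the spirit of the non-commability obstructions developed by the author and Tessera in \cite{CaretteTessera13}.
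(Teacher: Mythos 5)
There is a genuine gap here: your proposal is a plan whose central step is explicitly deferred, and the primary invariant you propose cannot carry the argument. If $\varphi\colon G\to H$ is copci then $\Delta_G=\Delta_H\circ\varphi$, so along a commation the images $\mathrm{Im}(\Delta)\leq\QQ^\times_{>0}$ form a zigzag of inclusions; but any two finitely generated subgroups of $\QQ^\times_{>0}$ sit inside a common finitely generated overgroup, so this zigzag imposes no constraint at all --- it cannot even rule out distance $2$, let alone distances $4$, $5$, $6$. You acknowledge that a ``finer, direction-sensitive invariant'' is needed, but identifying and controlling that invariant \emph{is} the theorem; without it the proposal proves nothing. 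Note also that the needed obstruction is not commensurability-flavoured as in \cite{CaretteTessera13}: the groups here are all commable, so the invariant must degrade in a quantified way with each arrow.

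What actually works in the paper is quite different in its specifics. One takes $H_1=H^{2,2,2}_{q,r,s}$, the fundamental group of a triangle of procyclic groups $\ZZ_n$ ($n=2qrs$) with each edge indexed $2$ in one direction and $q,r,s$ in the other. Two properties are established. First, a combinatorial analysis of coverings of this edge-indexed triangle (Lemmas~\ref{lemma:combinatorics} and~\ref{lemma:cycle}, Proposition~\ref{prop:cocpt_subgp_fi}) shows that \emph{every} closed cocompact subgroup of $H^{2,2,2}_{q,r,s}$ has finite index and corresponds to a graph cover of the triangle; from this one deduces (Theorem~\ref{theorem:key_example}\eqref{theorem:key_example_minimal}) that any segment $H_1\nwarrow\nearrow\nwarrow G$ can be replaced by $H_1\nearrow\Aut(T^{2,2,2}_{q,r,s})\nwarrow G$, which forces every shortest commation to start $H_1\nearrow\nwarrow\cdots$ and is the direction-sensitive mechanism you were looking for. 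Second, the relevant arithmetic invariant is not $\mathrm{Im}(\Delta)$ but vertex degrees: if $G$ acts geometrically on $T^{2,2,2}_{q,r,s}$ then a counting argument on the covering of the triangle shows the primes $q,r,s$ must lie in the sparse set $\mathcal S_{2,N}=\Pi_N\cup(\Pi_N+\Pi_N)$, where $N$ bounds the degrees of some geometric $G$-tree. Choosing $q,r,s$ outside $\mathcal S_{2,N}$ (possible by Corollary~\ref{corollary:primes_not_in_S2N}) yields the distance-$\geq 4$ bound; combining with Proposition~\ref{prop:bound_on_degree} (unimodular groups have uniformly bounded degrees on all their minimal geometric trees) gives distance $\geq 5$ from $U$; and taking $H_2=H^{2,2,2}_{q',r',s'}$ with $q',r',s'$ avoiding $\mathcal S_{2,N}$ for $N=\max\{q,r,s\}+2$ gives distance $\geq 6$. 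Your sketch gestures at ``structural data extracted from the Bass--Serre decomposition,'' which is the right neighbourhood, but the construction of $H_1,H_2$, the rigidity of their cocompact subgroups, and the degree obstruction are all missing and none of them follows from modular considerations.
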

	\begin{remark} Commability distance forgets the information contained in the directions of arrows. Indeed, the relation of commability through $\nearrow \nwarrow$ and $\nwarrow \nearrow$ are very different. For example Mosher, Sageev and Whyte gave examples of virtually free groups which are not commable through $\nearrow \nwarrow$ despite being commensurable \cite{MosherSageevWhyte03}. In fact if $G, U, H_1, H_2$ are as in Theorem~\ref{theorem:lower_bounds} we show that any commation of minimal length from $H_1$ to $G$ (resp. $U$ and $H_2$) is of the form $H_1\nearrow\nwarrow\nearrow\ldots G$ (resp. $H_1\nearrow\nwarrow\nearrow\ldots U$ and $H_1\nearrow\nwarrow\nearrow\ldots H_2$).
	\end{remark}
	Combining Theorems~\ref{theorem:upper_bounds} and~\ref{theorem:lower_bounds} we thus obtain:
	\begin{corollary} The commability class $\Tree$ has radius $4$ and diameter $6$. For every unimodular group $U \in \Tree$ the smallest ball around $U$ covering $\Tree$ has radius $5$.
	\end{corollary}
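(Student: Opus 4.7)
The plan is to derive each of the three assertions by pairing one clause of Theorem~\ref{theorem:upper_bounds} with the corresponding clause of Theorem~\ref{theorem:lower_bounds}; since both theorems already do the real work, the argument reduces to unpacking the definitions of radius, diameter, and smallest covering ball. Implicit throughout is that the commability distance on $\Tree$ is well-defined and finite, which is exactly the content of Corollary~\ref{corollary:tree_commable}.

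First I would settle the radius: Theorem~\ref{theorem:upper_bounds}(\ref{item:upper_bound_radius}) exhibits a group $G_0$ at distance at most $4$ from every $G \in \Tree$, showing that the radius of $\Tree$ is at most $4$. Conversely Theorem~\ref{theorem:lower_bounds}(\ref{item:lower_bound_radius}) asserts that for every $G \in \Tree$ there exists some $H_1 \in \Tree$ at distance at least $4$ from $G$, so no ball of radius strictly less than $4$ can cover $\Tree$; this yields the matching lower bound.

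The smallest ball around a unimodular $U \in \Tree$ is handled identically: Theorem~\ref{theorem:upper_bounds}(\ref{item:upper_bound_unimod}) gives radius at most $5$, while Theorem~\ref{theorem:lower_bounds}(\ref{item:lower_bound_unimod}) produces a group at distance at least $5$ from $U$. Similarly the diameter is at most $6$ by Theorem~\ref{theorem:upper_bounds}(\ref{item:upper_bound_diameter}) and at least $6$ by Theorem~\ref{theorem:lower_bounds}(\ref{item:lower_bound_diameter}).

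There is no genuine obstacle here: the statement is a formal consequence of the preceding two theorems. The only point requiring (very mild) care is the quantifier structure in the lower bounds, namely that the group $H_1$ furnished by Theorem~\ref{theorem:lower_bounds} is allowed to depend on $G$ (respectively on $U$), which is precisely what is needed to rule out every possible centre of a smaller covering ball.
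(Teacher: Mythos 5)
Your proof is correct and is exactly the paper's argument: the corollary is stated immediately after the remark "Combining Theorems~\ref{theorem:upper_bounds} and~\ref{theorem:lower_bounds} we thus obtain," so the intended proof is precisely the pairing of upper and lower bounds you describe, including the observation about the quantifier dependence of $H_1$ on $G$ and $U$.
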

	\begin{remark} The question of existence of a pair of groups at commability distance greater than $4$ was raised in \cite[Remark 5.A.2]{Cornulier12}. If two discrete groups are commable within discrete groups then they are commensurable up to finite kernels (i.e. there is a commation within discrete groups of the form $\injnwarrow \surjnearrow \surjnwarrow \injnearrow$).
	\end{remark}
	 
	\subsection{Products of symmetric spaces and Euclidean buildings} \label{sec:products} Throughout this section, we fix $X = \prod_{i=1}^{m} X_i \times Y$ where 
	\begin{itemize}
		\item each $X_i$ is a locally finite bushy tree
		\item $Y = \prod_{j=1}^{n} Y_j$ and each $Y_j$ is one of the following
	\begin{itemize}
		\item an irreducible symmetric space of noncompact type.
		\item a thick irreducible Euclidean building of rank/dimension $\geq 2$, with cocompact Weyl group.
	\end{itemize}
	\end{itemize}
	Suppose additionally that the factors are scaled in such a fashion that whenever $Y_j$ is homothetic to $Y_k$ then $Y_j$ is isometric to $Y_k$. The work of Cornulier \cite{Cornulier12} allows us to recast a result of Kleiner-Leeb in the setting of locally compact groups as follows:
	\begin{theorem}[\cite{KleinerLeeb09}] \label{theorem:Kleiner-Leeb} Let $X$ be as above, and let $G$ be a compactly generated l.c.\ group quasi-isometric to $X$. Then there is a family $X_i'$ of locally finite bushy trees such that $G$ acts geometrically by isometries on $X'=\prod_{i=1}^m X_i' \times Y$.
	\end{theorem}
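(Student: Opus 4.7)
The plan is to combine Kleiner-Leeb's quasi-isometric rigidity for products of symmetric spaces and Euclidean buildings with the quasi-isometric rigidity of bushy trees due to Mosher-Sageev-Whyte, and then invoke the machinery of Cornulier \cite{Cornulier12} to promote the conclusion from the finitely generated setting to that of a compactly generated locally compact group.

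First I would establish a product decomposition for quasi-isometries of $X$. Each $Y_j$ carries a well-defined top-dimensional flat structure that is preserved, up to bounded distance, by any quasi-isometry, while the trees $X_i$ are bushy and so carry no such structure; higher rank factors therefore cannot be mixed with tree factors under a quasi-isometry. Kleiner-Leeb's analysis of asymptotic cones then shows that any $(\lambda, C)$-quasi-isometry of $X$ is, modulo bounded error and a permutation of pairwise isometric factors, a product of quasi-isometries of the individual factors. The scaling hypothesis on the $Y_j$ ensures that only genuinely isometric factors get permuted. Consequently the cobounded quasi-action of $G$ on $X$ induced by the quasi-isometry $G \to X$ decomposes into cobounded quasi-actions of $G$ on each $X_i$ and each $Y_j$.

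Next I would straighten each factor. On each $Y_j$, the theorems of Mostow, Tukia, Pansu and Kleiner-Leeb identify $\QI(Y_j)$ with $\Isom(Y_j)$, so the factorised quasi-action of $G$ on $Y_j$ is at bounded distance from an isometric action. For each tree factor $X_i$, the Mosher-Sageev-Whyte theory produces a locally finite bushy tree $X_i'$ on which the induced quasi-action of $G$ is quasi-conjugate to a geometric isometric action, giving a $G$-equivariant quasi-isometry $X_i' \to X_i$. Assembling these factorwise straightenings yields a candidate geometric isometric action of $G$ on $X' = \prod_{i=1}^m X_i' \times Y$.

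The main obstacle is that this argument is naturally phrased in the discrete or finitely generated setting, whereas the statement concerns an arbitrary compactly generated l.c.\ group $G$, with the additional requirement that the action of $G$ be continuous, proper and cocompact with respect to its ambient topology. The essential input here is Cornulier's recasting \cite{Cornulier12}, which upgrades a cobounded quasi-action of a compactly generated l.c.\ group on a suitable geodesic space to a genuine continuous proper cocompact isometric action whenever sufficient rigidity of $\QI$ is available on the target. Checking the topological requirements compatibly on each factor, using that the straightening maps produced by the various rigidity theorems are canonical up to bounded error and equivariant with respect to the factorised quasi-actions, and then taking the product, yields the desired geometric action of $G$ by isometries on $X'$.
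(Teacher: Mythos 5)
The paper does not prove this statement---it is quoted from Kleiner--Leeb together with Cornulier's locally compact recasting---and your outline correctly reconstructs that derivation: the product splitting of quasi-actions from \cite{KleinerLeeb09}, Mosher--Sageev--Whyte rigidity to replace each tree factor $X_i$ by a tree $X_i'$ carrying a genuine geometric action, $\QI=\Isom$ rigidity on the factors $Y_j$, and Cornulier's upgrade to a continuous proper cocompact action in the locally compact category. The only points I would tighten are that your heuristic for why factors cannot mix (preservation of top-dimensional flats) fails when some $Y_j$ has rank one, whereas the factor-preservation statement of \cite{KleinerLeeb09} does cover that case, and that properness is a property of the diagonal action on the whole product rather than of the individual factor actions, which are merely cobounded.
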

	
	In particular if $m=0$ then $X'=Y = X$ so that there is a copci homomorphism $G \to \Isom(X)$. On the other hand, if $m \geq 1$ then the situation is drastically different. As a basic example, the free group $F_2$ of rank $2$ does not act geometrically on the $5$-regular tree $T_5$ despite being quasi-isometric to it. Using Theorem~\ref{theorem:Kleiner-Leeb} we nonetheless extend Corollary~\ref{corollary:tree_strong_QI} to products as above.
	
	\begin{alphcorollary}\label{corollary:products} Let $X$ be as above. If $G, H$ are compactly generated l.c.\ groups quasi-isometric to $X$ then $G$ and $H$ are commable. 
	\end{alphcorollary}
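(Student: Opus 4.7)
The plan is to reduce Corollary~\ref{corollary:products} to Corollary~\ref{corollary:tree_commable} by replacing the tree factors one at a time while leaving $Y$ untouched. First I would apply Theorem~\ref{theorem:Kleiner-Leeb} to $G$ and $H$, obtaining locally finite bushy trees $X_1',\ldots,X_m'$ and $X_1'',\ldots,X_m''$ together with geometric actions of $G$ on $X' := \prod_{i=1}^m X_i' \times Y$ and of $H$ on $X'' := \prod_{i=1}^m X_i'' \times Y$. Endowing the (locally compact) isometry groups of these proper CAT(0) spaces with the compact-open topology, these actions give copci homomorphisms $G \to \Isom(X')$ and $H \to \Isom(X'')$. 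The key geometric input is then the uniqueness of the de Rham--type factorization for proper CAT(0) spaces of this shape, which implies that every isometry of $X'$ preserves the product decomposition up to permutation of isometric factors, and in particular does not mix tree factors with the irreducible factors of $Y$. This would allow me to realize $\prod_{i=1}^m \Isom(X_i') \times \Isom(Y)$ as a closed finite-index, hence copci, subgroup of $\Isom(X')$, and likewise for $X''$.

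Next I would verify that each $\Isom(X_i')$ belongs to $\Tree$: projecting a compact fundamental domain for $G \curvearrowright X'$ to the $i$-th factor yields a compact fundamental domain for the image of $G$ in $\Isom(X_i')$, and hence for $\Isom(X_i')$ itself, which together with bushyness of $X_i'$ places $\Isom(X_i')$ in $\Tree$; similarly for $\Isom(X_i'')$. Corollary~\ref{corollary:tree_commable} then supplies, for each $i$, a commation
\[ \Isom(X_i') \;-\; A_{i,1} \;-\; \cdots \;-\; A_{i,\ell_i-1} \;-\; \Isom(X_i'') \]
within $\Tree$. Using that a direct product of copci homomorphisms is copci, I would multiply each arrow of the $i$-th commation by the fixed locally compact group $\prod_{j<i}\Isom(X_j'')\times\prod_{j>i}\Isom(X_j')\times\Isom(Y)$ and concatenate over $i=1,\ldots,m$ to obtain a commation from $\prod_i \Isom(X_i')\times\Isom(Y)$ to $\prod_i \Isom(X_i'')\times\Isom(Y)$. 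Splicing this with the two copci inclusions from the first paragraph and with the maps $G \to \Isom(X')$, $H \to \Isom(X'')$ produces the desired commation
\[ G \;\nearrow\; \Isom(X') \;\nwarrow\; \textstyle\prod_i \Isom(X_i') \times \Isom(Y) \;-\; \cdots \;-\; \prod_i \Isom(X_i'') \times \Isom(Y) \;\nearrow\; \Isom(X'') \;\nwarrow\; H. \]

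The main obstacle is the factorization step: for the strategy to work I must know that every isometry of $X'$ respects the splitting into tree and non-tree factors (so that $\prod_i \Isom(X_i')\times\Isom(Y)$ is genuinely cocompact in $\Isom(X')$), and that each $\Isom(X_i')$ acts cocompactly on $X_i'$. Both facts rest on the Kleiner-Leeb analysis of CAT(0) spaces of the form prescribed in Section~\ref{sec:products}; once they are in place, the remainder of the argument is a purely formal manipulation of products of copci homomorphisms.
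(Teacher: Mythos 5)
Your proposal is correct and follows essentially the same route as the paper: apply Theorem~\ref{theorem:Kleiner-Leeb} to both groups, pass to the finite-index subgroup $\prod_i \Isom(X_i') \times \Isom(Y)$ of $\Isom(X')$, and transport the commations from Corollary~\ref{corollary:tree_commable} across the tree factors while holding $\Isom(Y)$ fixed. The only (harmless) differences are that you replace the tree factors one at a time rather than taking the product of all the commations simultaneously, and that you spell out the de Rham/cocompactness justifications that the paper leaves implicit.
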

	This applies in particular to (cocompact S-arithmetic lattices in) products of simple algebraic groups over local fields, possibly with non-Archimedean rank $1$ factors (e.g. $\PSL_2(\QQ_p)$). Note also that there exist many discrete groups quasi-isometric to the product of two bushy trees which are not commensurable with products of free groups (e.g. irreducible cocompact lattices in $\PSL_2(\QQ_p) \times \PSL_2(\QQ_p)$). 
	
	The paper is organized as follows. We provide more detailed statements which imply Theorems~\ref{theorem:upper_bounds} and~\ref{theorem:lower_bounds} in Section~\ref{sec:outline}. We review the basics of commability in Sections~\ref{sec:lc_groups} and~\ref{sec:commability}. Roughly speaking, studying commability in the class $\Tree$ requires a combined understanding of the two following situations:
	\begin{itemize}
		\item Which groups $G_1, G_2$ act geometrically on the same tree  $G_1 \curvearrowright T \curvearrowleft G_2$?
		\item Which trees $T_1, T_2$ admit geometric actions by a same group $T_1 \curvearrowleft G \curvearrowright T_2$?
	\end{itemize}
	In the first situation, $G_1$ and $G_2$ are both cocompact subgroups of $\Isom(T)$. The study of cocompact subgroups of $\Isom(T)$ is addressed via covering theory of edge-indexed graphs, which is the content of Sections~\ref{sec:gog}--\ref{sec:covering}. Dealing with the second situation is done using Forester's Deformation Theorem and developed in Section~\ref{sec:deformations}.
	 Finally, we combine these tools in the technical Sections~\ref{sec:upper_bounds},~\ref{sec:key_examples} and~\ref{sec:lower_bounds}, an overview of which is deferred to Section~\ref{sec:outline}.
	\subsection*{Acknowledgments}
	I am grateful to Roberto Tauraso for providing a proof of Proposition~\ref{prop:finite_series}. I also thank Yves de Cornulier and Romain Tessera for inspiring discussions. Finally, I thank the referee for his careful reading and useful comments.
		
	\section{Detailed outline} \label{sec:outline}
	
	We record here the key statement to be proven, and apply them to prove Theorems~\ref{theorem:upper_bounds} and~\ref{theorem:lower_bounds}. We end with the proof of Corollary~\ref{corollary:products}.
	
	\subsection{Commability and upper bounds}
	
	For any natural number $n \geq 3$ let $T_n$ denote the $n$-regular tree. The first step in proving upper bounds is to show
	
	\begin{reptheorem}{theorem:commation_to_regular}
	Given any $G \in \Tree$ there is some $n \geq 3$ such that there is a commation $G \nearrow \nwarrow \nearrow \Aut(T_n)$.
	\end{reptheorem}
	Recall that by Theorem~\ref{theorem:BassKulkarni} of Bass--Kulkarni, any two unimodular groups $U, U' \in \Tree$ are commable through $U \nwarrow \nearrow U'$. Moreover the full automorphism group of the $n$-regular tree is unimodular. Thus Theorem~\ref{theorem:upper_bounds}\eqref{item:upper_bound_unimod} follows from Theorem~\ref{theorem:commation_to_regular}. 
		
	It is not possible to improve the above result by requiring $n$ to be an a priori fixed integer. However, it is possible to change slightly the commation above (and the number $n$) in order to obtain the upper bound of $6$ on the diameter of $\Tree$:
	\begin{reptheorem}{theorem:upper_bound_diam}[Theorem~\ref{theorem:upper_bounds}\eqref{item:upper_bound_diameter}] Given any $G, G' \in \Tree$ there is some $m \geq 3$ such that there is a commation $G \nearrow \nwarrow \nearrow \Aut(T_{m}) \nwarrow \nearrow \nwarrow G'$.
	\end{reptheorem}
	
	In order to show that the radius of $\Tree$ is at most $4$, one must choose an appropriate candidate group from which to measure distances. It follows from Theorem~\ref{theorem:lower_bounds}\eqref{item:lower_bound_unimod} that such a candidate cannot be chosen to be unimodular.
		
	Let $G_{2,4}$ be the fundamental group of the following graph of groups
	\[\begin{xy}\xymatrix{ 
			 \ZZ_2 \ar@{-}@(ur,dr)^<<{\times 2}^>>{\times 4}^{\ZZ_2}
			 }
			 \end{xy}\]
	where $\ZZ_2$ denotes the (compact) group of $2$-adic integers. The group $G_{2,4}$ is isomorphic to the closure of the Baumslag-Solitar group $\BS(2,4)$ in the automorphism group of the standard Bass-Serre tree of $\BS(2,4)$, which is the $6$-regular tree.

	The upper bound of $4$ on the radius of $\Tree$ is provided by the following
	\begin{reptheorem}{theorem:upper_bound_radius}[Theorem~\ref{theorem:upper_bounds}\eqref{item:upper_bound_radius}]
		Given any $G \in \Tree$ there is a commation $G \nearrow \nwarrow \nearrow \nwarrow G_{2,4}$.
	\end{reptheorem}
	
	\subsection{Examples attaining the upper bounds}
	
	We show that all upper bounds above are attained, using the following family of groups.
	Fix $q,r,s$ pairwise distinct odd primes. Let $n = 2 \times q \times r \times s$ and $\ZZ_n = \ZZ_2 \times \ZZ_q \times \ZZ_r \times \ZZ_s$. We let $H^{2,2,2}_{q,r,s}$ denote the fundamental group of the following graph of groups:
	 
	\[\begin{xy}
			\xymatrix{ \ZZ_n \ar@{-}[rr]^<<{\times 2}^>>{\times q}^{\ZZ_n} & & \ZZ_n \ar@{-}[dl]^<<{\times 2}^>>{\times s}^{\ZZ_n} \\ 
			  & \ZZ_n \ar@{-}[lu]^<<{\times 2}^>>{\times r}^{\ZZ_n} &
			 }\end{xy}.\]
	Let $T^{2,2,2}_{q,r,s}$ denote the Bass-Serre tree of the above graph of groups. The group $H^{2,2,2}_{q,r,s}$ is isomorphic to $\overline{\Gamma^{2,2,2}_{q,r,s}}$ the closure of the obvious generalized Baumslag-Solitar group (fundamental group of the obvious graph of infinite cyclic groups) in $\Aut(T^{2,2,2}_{q,r,s})$.
	
	In Definition~\ref{definition:S_2,N} we associate to each natural number $N\geq 1$ a set $\mathcal S_{2,N} \subset \NN$ which turns out to be sparse even when intersected with the set of primes.
	\begin{repcorollary}{corollary:primes_not_in_S2N} Given any $N \in \NN$ there are infinitely many primes not in $\mathcal S_{2,N}$.
	\end{repcorollary}
	Our main technical result shows that the groups $H^{2,2,2}_{q,r,s}$ satisfy very strong rigidity properties.
	\begin{reptheorem}{theorem:key_example} Let ${q,r,s} \geq 11$ be pairwise distinct primes. 
		\begin{enumerate} 
			\item Suppose there is a commation $H^{2,2,2}_{q,r,s} \nwarrow \nearrow \nwarrow G$ then there is a commation $H^{2,2,2}_{q,r,s} \nearrow \Aut(T^{2,2,2}_{q,r,s}) \nwarrow G$.
			\item Suppose moreover that $q,r,s$ are not in $\mathcal S_{2,N}$. Then $G$ cannot act geometrically on a tree with degree bounded above by $N$.
		\end{enumerate}
	\end{reptheorem}

	In order to show that the diameter of $\Tree$ is $6$ it is enough to show the following:
	\begin{repcorollary}{corollary:lower_bound_diameter} Let $q,r,s$ be sufficiently large primes, let $N = \max\{q,r,s\}$ and let $q',r',s'$ be primes not in $\mathcal S_{2,N}$. Then there is no commation of length less than $6$ from $H^{2,2,2}_{q,r,s}$ to $H^{2,2,2}_{q',r',s'}$.
	\end{repcorollary}
	
	We also show that the radius of $\Tree$ is exactly $4$ as follows.	
	
	\begin{repcorollary}{corollary:lower_bound_radius}Let $G \in \Tree$ and let $T$ be a geometric $G$-tree. Let $N$ be the maximal degree of a vertex of $T$, and let $q,r,s$ be primes not in $\mathcal S_{2,N}$. Then $G$ is at distance at least $4$ from $H^{2,2,2}_{q,r,s}$.
	\end{repcorollary}
	
	\begin{repprop}{prop:bound_on_degree}  Let $U\in \Tree$ be a unimodular group. Then there is a natural number $N_U$ such that if $T$ is a geometric minimal $U$-tree then all vertices of $T$ have degree at most $N_U$.
	\end{repprop}
	\begin{repcorollary}{corollary:lower_bound_unimod} Let $U \in \Tree$ be a unimodular group and $N_U \in \NN$ be as in Proposition~\ref{prop:bound_on_degree}. For pairwise distinct primes $q,r,s \geq 11$ not in $\mathcal S_{2,N_U}$ the group $H^{2,2,2}_{q,r,s}$ is at distance at least $5$ from $U$.
	\end{repcorollary}
	
	Theorem~\ref{theorem:lower_bounds} now follows from the combination of Corollaries~\ref{corollary:lower_bound_diameter},~\ref{corollary:lower_bound_radius} and~\ref{corollary:lower_bound_unimod} (along with the fact that $\mathcal S_{2,N} \subset \mathcal S_{2,N'}$ for $N \leq N'$). 
	
	\begin{remark} It is straightforward to strengthen Theorem~\ref{theorem:lower_bounds} as follows. 
	
	There is a countable family of groups $\{H_k\}_{k \in \NN}$ in $\Tree$ such that: 
		\begin{enumerate}
			\item Every unimodular group in $\Tree$ is at distance at least 3 from $H_k$ for all $k$.
			\item For every group $G \in \Tree$ there is some $K_G \in \NN$ such that $G$ is at distance at least $4$ from $H_k$ for all $k\geq K_G$.
			\item For every unimodular group $U \in \Tree$ there is some $K_U \in \NN$ such that $U$ is at distance $5$ from $H_k$ for all $k\geq K_U$.
			\item $H_k$ is at distance $6$ from $H_{k'}$ for $k \neq k'$.
		\end{enumerate}
	In particular one cannot cover $\Tree$ with the $2$-neighborhood of unimodular groups, nor with finitely many balls of radius $3$, nor with finitely many balls of radius $4$ centered at unimodular groups.
	\end{remark}
	
	\subsection{Proof of Corollary~\ref{corollary:products}}
	
	Let $X = \prod_{i=1}^{m} X_i \times Y$ be as in Section~\ref{sec:products}. Let $G,H$ be compactly generated l.c.\ groups quasi-isometric to $X$. By Theorem~\ref{theorem:Kleiner-Leeb} there are locally finite bushy trees $X_1', \ldots X_m'$ and $X_1'', \ldots, X_m''$ such that $G$ and $H$ act geometrically on $X' = \prod_{i=1}^{m} X_i' \times Y$ and $X'' = \prod_{i=1}^{m} X_i'' \times Y$ respectively. Moreover the group $\prod_{i=1}^{m} \Isom(X_i') \times \Isom(Y)$ has finite index in $\Isom(X')$ (and similarly for $X''$) so that there are commations 
	\[ G \nearrow \Isom(X') \nwarrow \prod_{i=1}^{m} \Isom(X_i') \times \Isom(Y) \] and \[ H \nearrow \Isom(X'') \nwarrow \prod_{i=1}^{m} \Isom(X_i'') \times \Isom(Y). \]
	Finally for each $1\leq i\leq m$ Corollary~\ref{corollary:tree_commable} provides commations from $\Isom(X_i')$ to $\Isom(X_i'')$, so that taking the product of these commations and of isomorphisms $\Isom(Y) \cong \Isom(Y)$ we find a commation from $\prod_{i=1}^{m} \Isom(X_i') \times \Isom(Y)$ to $\prod_{i=1}^{m} \Isom(X_i'') \times \Isom(Y)$.

	\section{Preliminaries}
	
	\subsection{Compactly generated l.c.\ groups} \label{sec:lc_groups}
		
	We remind the reader of a few basic definitions and facts about compactly generated locally compact groups.
	
	Let $G$ be a locally compact group acting by isometries on a proper metric space $X$. We call the action 
	\begin{itemize}
		\item \textbf{continuous} if the map $G \times X \to X$ is continuous.
		\item \textbf{proper} if for any compact $K \subset X$ the set $\{g \in G \mid gK \cap K \neq \emptyset \}$ has compact closure.
		\item \textbf{cocompact} if there is a compact $K \subset X$ such that $X = GK$.
		\item \textbf{geometric} if it is continuous, proper and cocompact.
	\end{itemize}
	
	A basic motivation for the study of locally compact groups stems from the following fact: if $X$ is a proper geodesic metric space, then the isometry group $\Isom(X)$ endowed with the topology of uniform convergence on compact sets is a locally compact topological group. Moreover, the action of $\Isom(X)$ on $X$ is proper and continuous.
	
	The Schwartz-Milnor lemma asserts that if a locally compact group $G$ acts geometrically on a proper geodesic metric space $X$, then $G$ has a compact generating set $K$ and the Cayley graph $\Cay(G,K)$ is quasi-isometric to $X$. As is the case for finitely generated groups, two compact generating sets of a locally compact group $G$ induce bilipschitz word metrics. Thus, the quasi-isometry class of a compactly generated locally compact groups is well-defined. However, Cayley graphs of nondiscrete locally compact groups with respect to compact generating sets have two shortcomings: the Cayley graph is not locally finite, and the action of $G$ on its Cayley graph is not continuous. 	It is nonetheless true that any compactly generated locally compact group acts geometrically on some proper geodesic metric space \cite[Proposition 2.1]{CCMT12}. In particular any quasi-isometry invariant of proper geodesic metric spaces (e.g. the space of ends, \ldots) turns into a quasi-isometry invariant of compactly generated locally compact groups.

	If $X$ is a proper geodesic metric space endowed with a geometric action by $G$ and $H$ is a locally compact group, then a homomorphism $H \to G$ is copci if and only if the induced action of $H$ on $X$ is geometric. Observe that being compactly generated is a commability invariant. In view of the above facts, one can alternatively describe commability in terms of geometric actions.
	\begin{prop} \label{prop:commability_spaces} Let $G, G'$ be l.c.\ groups, and suppose $G$ is compactly generated. Then
	\begin{enumerate}
		\item \label{item:commability_spaces_seq}$G,G'$ are commable if and only if there is a sequence of isometric actions \[G = G_0 \curvearrowright X_1 \curvearrowleft G_2 \curvearrowright X_3 \ldots X_{2n -1} \curvearrowleft G_{2n} = G'\]  where each $G_i$ is locally compact, each $X_j$ is a proper geodesic metric space and each action is geometric.
		\item If $G,G'$ are commable, then they are quasi-isometric.
	\end{enumerate}
	\end{prop}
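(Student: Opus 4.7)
The plan is to translate between commations and sequences of geometric actions on proper geodesic metric spaces, using two facts recorded in Section~\ref{sec:lc_groups}: (a) every compactly generated locally compact group admits a geometric action on some proper geodesic metric space, and (b) given a geometric $G$-action on such a space $X$, a homomorphism $H \to G$ is copci if and only if the induced $H$-action on $X$ is geometric.

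For the ``only if'' direction of (1), start with a commation $G = G_0 - G_1 - \cdots - G_m = G'$. Since compact generation is a commability invariant and $G$ is compactly generated, every $G_i$ is compactly generated, so by (a) each $G_i$ carries a geometric action on some proper geodesic metric space $Y_i$. For each dash $G_{i-1} - G_i$, take as the common space $X := Y_i$ if the arrow points $G_{i-1} \to G_i$, and $X := Y_{i-1}$ otherwise. In the first case, the action of $G_i$ on $X$ is geometric by construction, and the $G_{i-1}$-action induced by $G_{i-1} \to G_i \to \Isom(Y_i)$ is geometric by (b) applied to the copci $G_{i-1} \to G_i$; the other case is symmetric. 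Concatenating gives the required sequence $G_0 \curvearrowright X_1 \curvearrowleft G_1 \curvearrowright X_2 \curvearrowleft \cdots \curvearrowleft G_m$ (reindexed to match the notation of the statement).

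For the ``if'' direction, let $G_0 \curvearrowright X_1 \curvearrowleft G_2 \curvearrowright X_3 \cdots \curvearrowleft G_{2n} = G'$ be such a sequence. Each $\Isom(X_j)$ is a locally compact group, and for each geometric action on $X_j$ the associated homomorphism into $\Isom(X_j)$ has compact kernel (the pointwise stabilizer, compact by properness) and closed cocompact image (use the orbit map: any isometry of $X_j$ can be translated by a group element into a compact neighborhood of a basepoint determined by a fundamental domain). Hence each such homomorphism is copci, and we obtain the commation
\[ G = G_0 \to \Isom(X_1) \leftarrow G_2 \to \Isom(X_3) \leftarrow \cdots \leftarrow G_{2n} = G'. \]

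Assertion (2) is then immediate from (1): in the corresponding sequence of geometric actions, the Schwartz-Milnor lemma identifies each $G_{2i}$ up to quasi-isometry with the adjacent spaces $X_{2i\pm 1}$, and quasi-isometry is transitive. The only delicate point is the bookkeeping in the ``only if'' direction, namely verifying that compact generation propagates along the commation so that (a) applies to every intermediate group; this is precisely the commability invariance of compact generation already noted in Section~\ref{sec:lc_groups}.
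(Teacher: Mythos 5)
Your proof is correct and is essentially the argument the paper intends: the proposition is stated right after the facts you invoke (existence of a geometric action for any compactly generated l.c.\ group, the characterization of copci maps via induced geometric actions, commability invariance of compact generation, and Schwartz--Milnor), and the paper leaves the assembly implicit. Your passage to $\Isom(X_j)$ in the ``if'' direction and the propagation of compact generation in the ``only if'' direction are exactly the intended details.
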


	\subsection{Quasi-isometric rigidity of trees} \label{sec:commability}
	
	A locally compact group $G$ is called \textbf{non-elementary} if it does not act continuously and properly on the real line. Equivalently, $G$ is non-elementary if it is neither compact ($0$-ended) nor compactly generated and $2$-ended.

	\begin{prop} Let $G$ be a compactly generated l.c.\ group with infinitely many ends. Then $G$ has a unique maximal compact normal subgroup (namely the kernel of the action on the space of ends) which contains the connected component of the identity.
	\end{prop}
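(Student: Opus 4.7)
The plan is to leverage the structure theory of compactly generated l.c.\ groups with infinitely many ends: such a $G$ belongs to $\Tree$, and so admits a geometric action on a locally finite simplicial tree $T$ which we may take to be minimal (passing to the barycentric subdivision to rule out edge inversions if needed). Under this action, the space of ends of $G$ is naturally identified with $\partial T$ equipped with its totally disconnected compact topology, and the given action of $G$ on its end space coincides with the natural boundary action on $\partial T$.

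First I would identify the kernel $K$ of $G \curvearrowright \partial T$ with the pointwise stabilizer of $T$. Since $|\partial T| \geq 3$ and a hyperbolic isometry of a tree fixes only the two endpoints of its axis, every element $g \in K$ must be elliptic and hence has a fixed vertex $v$. For each end $\xi$, the unique geodesic ray from $v$ to $\xi$ is preserved setwise by $g$; being an isometry fixing both its starting vertex and its limit end, $g$ fixes this ray pointwise. Minimality of $T$ guarantees that every vertex lies on some bi-infinite geodesic between two ends, so $g$ fixes $T$ pointwise. The reverse inclusion is immediate. As the pointwise stabilizer of $T$ is contained in any vertex stabilizer, and vertex stabilizers are compact by properness, $K$ is a compact closed normal subgroup.

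The inclusion $G^0 \subset K$ follows at once from the fact that $\partial T$ is totally disconnected, so the image of the connected group $G^0$ in $\mathrm{Homeo}(\partial T)$ must be trivial. For maximality and uniqueness, let $N$ be any compact normal subgroup of $G$. Compactness of $N$ together with properness of the $G$-action on $T$ forces all $N$-orbits to be bounded, and the Serre fixed-point theorem then provides a non-empty fixed subtree $\mathrm{Fix}(N) \subset T$. Since $N$ is normal in $G$, the subtree $\mathrm{Fix}(N)$ is $G$-invariant, and minimality of $T$ forces $\mathrm{Fix}(N) = T$; hence $N$ fixes $T$ pointwise, and so $N \subset K$. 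This shows that $K$ is the unique maximal compact normal subgroup of $G$.

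The main obstacle is invoking the structure theorem which places $G$ in $\Tree$ with a geometric action on a locally finite tree; once that is available, the argument reduces to standard tree combinatorics: the dichotomy between elliptic and hyperbolic isometries, properness-induced compactness of vertex stabilizers, and the Bruhat--Tits--Serre fixed-point theorem. The minor technical subtlety of whether a compact subgroup fixes a vertex or merely the midpoint of an edge is handled uniformly by passing to the barycentric subdivision.
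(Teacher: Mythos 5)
The paper states this proposition without proof (it is a standard fact going back to Abels and recorded by Cornulier), so your attempt has to stand on its own, and it has a genuine gap at the very first step. You claim that a compactly generated l.c.\ group with infinitely many ends "belongs to $\Tree$" and hence acts geometrically on a locally finite tree. This is false: membership in $\Tree$ is equivalent to being quasi-isometric to a locally finite bushy tree (item (3) of the cited characterization), which is strictly stronger than having infinitely many ends. The paper's own example $\Gamma_1 \ast \ZZ$ with $\Gamma_1$ a cocompact lattice in $\PSL_2(\CC)$ --- or any free product of two one-ended groups --- has infinitely many ends but admits no geometric action on a locally finite tree. What Stallings--Abels actually gives for such a $G$ is a cocompact action on a tree with \emph{compact open edge stabilizers}, but the vertex stabilizers need not be compact, the tree need not be locally finite, and the action is not proper. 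Consequently your key step "the pointwise stabilizer of $T$ is contained in a vertex stabilizer, which is compact by properness" is exactly the step that breaks in the general case; it must be replaced by the observation that the pointwise stabilizer of a minimal tree with at least three ends lies in an \emph{edge} stabilizer, which is compact in the Stallings--Abels splitting. One also has to check that the kernel of the $G$-action on its space of ends is contained in the kernel of the action on $\partial T$, which is no longer immediate since ends of $G$ are not just ends of $T$ when vertex groups are noncompact.

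The remainder of your argument is sound \emph{under the hypothesis} $G \in \Tree$: the identification of the kernel of the boundary action with the pointwise stabilizer of a minimal tree, its compactness, the triviality of the action of $G^0$ on a totally disconnected boundary, and the fixed-subtree argument showing every compact normal subgroup is contained in this kernel are all correct. Since the paper only ever applies the proposition to groups acting geometrically on locally finite trees, your proof covers the cases actually used; but as a proof of the stated proposition it assumes a structure theorem that simply does not hold at the stated level of generality.
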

	
	A tree $T$ is called \textbf{bushy} if $T$ is unbounded and the set of vertices which separate $T$ in at least $3$ unbounded components is cobounded. Any two locally finite bushy trees of bounded degree are quasi-isometric \cite[Lemma 5.E.9]{Cornulier12}.

	We shall freely use the following equivalent characterizations of the class $\Tree$, which essentially follow from Stallings-Abels' theorem on ends of compactly generated l.c.\ groups, as well as accessibility results of Dunwoody and Thomassen-Woess.
	\begin{theorem}[{\cite[Theorem 4.A.1]{Cornulier12}}] Let $G$ be a l.c.\ group. TFAE:
	\begin{enumerate}
		\item $G$ is non-elementary and acts geometrically on a locally finite tree. \label{item:tree_action}
		\item $G$ is the fundamental group of a finite graph of compact groups with open edge monomorphisms. \label{item:gocg}
		\item $G$ is compactly generated and quasi-isometric to a locally finite bushy tree.\label{item:quasi-isometry}
	\end{enumerate}
	\end{theorem}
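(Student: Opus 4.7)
The plan is to close the cycle (1) $\Leftrightarrow$ (2) (via Bass-Serre theory for topological groups), (1) $\Rightarrow$ (3) (via the Schwartz-Milnor lemma), and (3) $\Rightarrow$ (2) (the substantive direction, using Stallings-Abels and accessibility).

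For (1) $\Rightarrow$ (2), I would start with a geometric action of $G$ on a locally finite tree $T$. Cellularity of the action implies that vertex and edge stabilizers are open subgroups of $G$; properness makes them compact, and cocompactness forces the quotient graph $G \backslash T$ to be finite. The Bass-Serre correspondence then realizes $G$ as the fundamental group of the finite graph of compact groups $G \backslash\!\backslash T$, with edge-to-vertex inclusions automatically open as inclusions of open subgroups of compact groups. For (2) $\Rightarrow$ (1), I would form the Bass-Serre tree $T$ of the given graph of groups. Openness of an edge group in an adjacent compact vertex group is equivalent to its being of finite index, so $T$ is locally finite; the canonical action of $G = \pi_1$ is then proper (conjugates of compact vertex groups as stabilizers) and cocompact (finite quotient graph), hence geometric. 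Non-elementarity is an implicit standing assumption on both sides and rules out the degenerate cases where the graph of groups is a single compact vertex or reduces to a compact-by-$\ZZ$ ascending HNN extension.

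For (1) $\Rightarrow$ (3), the Schwartz-Milnor lemma immediately yields that $G$ is compactly generated and quasi-isometric to $T$; non-elementarity forces the $G$-minimal invariant subtree to be bushy, and any two locally finite bushy trees are mutually quasi-isometric.

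The main obstacle is (3) $\Rightarrow$ (2). Since the number of ends is a quasi-isometric invariant and a locally finite bushy tree has a Cantor set of ends, $G$ has infinitely many ends. The Stallings-Abels theorem on ends of compactly generated l.c.\ groups then produces a nontrivial splitting of $G$ over a compact open subgroup as a topological amalgamated free product or HNN extension. Iterating with the accessibility theorem (Dunwoody in the discrete setting, extended by Thomassen-Woess to compactly generated l.c.\ groups) terminates in a finite graph of groups decomposition of $G$ with compact open edge groups and vertex groups of at most one end. The crucial remaining step, and the genuine work in the theorem, is to exclude one-ended vertex groups. Since vertex groups in a splitting over compact edge groups are quasi-isometrically embedded in $G$, a one-ended vertex group would yield a coarsely one-ended quasi-isometric embedding into a locally finite bushy tree; this is impossible, since removing any sufficiently large bounded subset of such a tree separates it into several unbounded components, so no coarsely one-ended subset can exist. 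Consequently every vertex group is compact, verifying (2).
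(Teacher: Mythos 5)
The paper does not prove this statement: it is quoted verbatim from Cornulier (Theorem 4.A.1 of his paper), and the surrounding text only records that it ``essentially follows from Stallings--Abels' theorem on ends of compactly generated l.c.\ groups, as well as accessibility results of Dunwoody and Thomassen--Woess.'' Your outline invokes exactly these ingredients for the substantive direction (3)$\Rightarrow$(2), handles the Bass--Serre and Schwartz--Milnor directions routinely, and correctly flags the implicit non-elementarity needed in (2), so it follows the same route the paper points to and I see no substantive gap.
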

	Consequently if $G \in \Tree$ and $G'$ is commable to $G$ then $G' \in \Tree$. Moreover, the spaces $X_i$ as in Proposition~\ref{prop:commability_spaces}\eqref{item:commability_spaces_seq} can be chosen to be locally finite trees.
	
	\begin{remark} Weaker characterizations of virtually free groups carry over to the class $\Tree$ \cite{CaretteDreesen12}. Indeed a l.c. group $G$ is in $\Tree$ if and only if one of the equivalent conditions is satisfied:
		\begin{enumerate}
			\item $G$ is non-elementary hyperbolic with totally disconnected boundary.
			\item $G$ has a uniform convergence action on the Cantor Set.
		\end{enumerate}
	\end{remark}

	\subsection{Graphs, graphs of groups} \label{sec:gog} We fix notation for graphs of groups, and refer the reader to \cite{Serre80} for the correspondance between actions on trees and graphs of groups. A \textbf{graph} $\Ag$ is given by the following data: a vertex set $VA$, an edge set $EA$, a fixed-point-free involution $\overline{\phantom e} : EA \to EA$ which associates to each edge its \textbf{inverse} edge, and a map $\alpha : EA \to VA$ assigning to each edge its \textbf{initial vertex}. For convenience, we also denote the \textbf{terminal vertex} of an edge $e$ by $\omega(e) = \alpha(\overline{e})$.
	
	A \textbf{graph of groups} $\Af$ is given by the following data: an underlying graph $\Ag$, a vertex group $\Af_v$ for each $v\in V\Ag$, an edge group $\Af_e$ for each $e \in E\Ag$ with $\Af_e = \Af_{\overline e}$ and injective edge homomorphisms $\alpha_e : \Af_e \to \Af_{\alpha(e)}$. For convenience, we also define $\omega_e :=\alpha_{\overline e}$. Given a connected graph of groups $\Ag$ we denote by $\pi_1(\Ag)$ its fundamental group.
	
	Let $\Af$ be a connected graph of groups. If each vertex and edge group is a topological group, and each edge monomorphisms is continuous and open, then $G = \pi_1(\Af)$ is a topological group when declaring each vertex and edge group to be open in $G$, and the action of $G$ on the Bass-Serre tree is continuous. Conversely, if a topological group $G$ acts continuously and without inversions on a tree $T$ then edge stabilizers are open in $G$, and in particular for every edge $e\in ET$ the inclusion $G_e \subset G_{\alpha(e)}$ is continuous and open.
	
	\subsection{From trees to edge-indexed graphs}
	We will use notations close to Bass \cite{Bass93} and Bass-Kulkarni \cite{BassKulkarni90}.	
	An \textbf{edge-indexed graph} is a pair $(\Ag,i)$ where $\Ag$ is a graph and $i$ is a map from $EA$ to the positive natural numbers $\NN$ (excluding $0$). The \textbf{trivial indexing} on a graph $A$ is the map $\Trivial$ which assigns $1$ to every edge. In the following, we will tacitly identify a graph $A$ with the trivially indexed graph $(A, \Trivial)$.
	
	Given a graph of groups $\Af$ such that all edge monomorphisms have finite index image (for example coming from the action of a group on a locally finite tree), one associates an edge-indexed graph with the same underlying graph as $\Af$ and the indexing of $(\Ag,i)$ is defined by $i(e) = [\Af_{\alpha(e)} : \alpha_e (\Af_e)]$. Following Bass--Kulkarni, we call $\Af$ a \textbf{grouping} of $(\Ag,i)$ if $(\Ag,i)$ is obtained from $\Af$ in this way. We observe that any edge-indexed graph always admits a grouping.
	
	\begin{definition}[(Pro)cyclic grouping] \label{defn:cyclic_grouping} Given any edge-indexed graph $(\Ag,i)$ we define its \textbf{cyclic grouping} $\ZZ(\Ag,i)$ which is the graph of groups $\Af$ with underlying graph $\Ag$, with infinite cyclic vertex and edge groups, and with edge monomorphisms $\alpha_e$ corresponding to multiplication by $i(e)$. Fundamental groups of such graphs of groups are also called \emph{generalized Baumslag-Solitar groups}. Given a torsion-free procyclic group (e.g. of the form $\ZZ_n$) we define the corresponding \textbf{procyclic grouping} analogously and denote it by $\ZZ_n(\Ag,i)$.
	\end{definition}
	
	\subsection{Covering theory for edge-indexed graphs} \label{sec:covering}
	A \textbf{covering map} of edge-indexed graphs $p : (\Ag,i) \to (\Bg,j)$ is a surjective graph map $p : A \to B$ with the additional property that for any $v \in VA$ and any edge $e \in EB$ with the property that $\alpha(e) = p(v)$ we have \[\sum_{f \in p^{-1}(e) \cap \alpha^{-1}(v)} i(f) = j(e).\] Otherwise stated, for any edge $e \in EB$ and any lift $v \in VA$ of the initial vertex of $e$, the index of the edge $e$ is the sum of the indices of all lifts of that edge with initial vertex $v$. 
	
	We collect facts from Tits \cite{Tits70}, Bass \cite{Bass93} and Bass-Kulkarni \cite{BassKulkarni90}. Every connected edge-indexed graph $(\Ag,i)$ admits an (essentially unique) covering by a tree with trivial indexing, called \textbf{universal cover} and denoted by $p : \widetilde{(\Ag,i)} \to (\Ag,i)$.
	\begin{remark}[Constructing the universal cover] Here are two approaches to constructing the universal covering tree of $(\Ag,i)$. 
	
	The first is via Bass-Serre theory: choose any grouping $\Af$ of $(\Ag,i)$ (see Definition~\ref{defn:cyclic_grouping} for the existence of a grouping), let $T$ be the corresponding Bass-Serre tree, and $p : T \to \pi_1(\Af) \backslash T = \Ag$ be the quotient map. Then it follows from Bass-Serre theory that $p : (T, \Trivial) \to (\Ag,i)$ is a cover.
	 	
	Alternatively, it is possible to construct the universal covering tree by induction on $k$, starting with a vertex $T_0 = v_0$ and constructing the ball $T_k$ of radius $k$ around $v_0$ by adding edges to the leaves of $T_{k-1}$, and mapping $p_k : T_k \to \Ag$ so that the restriction of the maps to the vertices of $T_{k-1}$ is like a cover (i.e. for every vertex $v \in VT_{k-1}$ and every edge $e \in E\Ag$ starting at $p_k(v)$ there are exactly $i(e)$ lifts of $e$ starting at $v$). Taking the union of the $T_k$ and of the maps $p_k$ produces the desired universal cover.
	\end{remark} 
	\begin{remark}[Recognizing the universal covering tree] \label{remark:recognition} Let $(\Ag,i)$ be a connected edge-indexed graph, and assume for simplicity that the graph is combinatorial, i.e. there does not exist $e \neq e'$ with $\alpha(e) = \alpha(e')$ and $\omega(e) = \omega(e')$ so that each (directed) edge $e$ is determined by its endpoints $(\alpha(e), \omega(e))$. Then, forgetting the covering map, the universal covering tree of $(\Ag,i)$ can be described as the unique tree $T$ (up to isomorphism) such that there is a partition of the vertices (given by a map $p : VT \to V\Ag$) with the property that for any edge $(v,w)$ in $\Ag$, each vertex in $p^{-1}(v)$ is adjacent to exactly $i(e)$ vertices in $p^{-1}(w)$.
	\end{remark}
	
	Given a connected edge-indexed graph $(\Ag,i)$ we denote by $\pi_1(\Ag,i)$ the group of \textbf{deck-transformations} of the universal covering $p : \widetilde{(\Ag,i)} \to (\Ag,i)$ defined as the group of those automorphisms $\alpha \in \Aut(\widetilde{(\Ag,i)})$ such that $\alpha \circ p = p$. The group $\pi_1(\Ag,i)$ endowed with the topology of pointwise convergence is a locally compact group acting continuously and properly (i.e. with compact open vertex stabilizers) on $\widetilde{(\Ag,i)}$ without inversions. Moreover the quotient graph of groups $\Af$ is naturally isomorphic as a graph to $A$, and for every edge $e \in EA$ one has $i(e) = [\Af_{\alpha(e)} : \alpha_e (\Af_e)]$.
	
	\begin{lemma} \label{lemma:qfree_group_copci_elg} Let $G$ be a l.c.\ group acting geometrically without inversions on a locally finite tree $T$. Let $(\Ag,i)$ be the corresponding (finite, connected) edge-indexed graph. Then there is copci homomorphism $\varphi : G \to \pi_1(\Ag,i)$ and a $\varphi$-equivariant isomorphism $\varphi : T \to \widetilde{(\Ag,i)}$.
	\end{lemma}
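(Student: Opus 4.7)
The plan is to realize $p:T\to G\backslash T=\Ag$ as the universal edge-indexed covering of $(\Ag,i)$, so that $G$ automatically acts by deck transformations, producing $\varphi$. The work then reduces to verifying the covering property of $p$ and the three topological axioms of copci for $\varphi$.

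First I would check that $p:(T,\Trivial)\to(\Ag,i)$ is a covering. Fix $\tilde v\in VT$ and $e\in E\Ag$ with $\alpha(e)=p(\tilde v)$. Any two edges of $T$ starting at $\tilde v$ and projecting to $e$ lie in the same $G$-orbit (single fiber of $p$), and any $g$ realizing such an equivalence must fix $\tilde v$ since it sends initial vertex to initial vertex; hence the set of such edges forms a single $G_{\tilde v}$-orbit, in bijection with $G_{\tilde v}/G_{\tilde e}$ for any lift $\tilde e$ of $e$ starting at $\tilde v$. By definition of the edge-indexed quotient of $G\curvearrowright T$, the associated grouping $\Af$ has $\Af_{p(\tilde v)}=G_{\tilde v}$ and $\alpha_e(\Af_e)=G_{\tilde e}$, so $i(e)=[G_{\tilde v}:G_{\tilde e}]$ exactly matches the fiber count. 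Uniqueness of the universal cover (Remark~\ref{remark:recognition} in the combinatorial case, or Bass--Serre theory applied to $\Af$ in general) then yields an isomorphism $T\cong\widetilde{(\Ag,i)}$ over $\Ag$. Since each $g\in G$ commutes with $p$ by construction, $g$ acts on $T$ as a deck transformation, producing a natural homomorphism $\varphi:G\to\pi_1(\Ag,i)$ for which the above identification is $\varphi$-equivariant tautologically.

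To finish, I would verify the copci conditions for $\varphi$. Continuity is immediate, since $G\curvearrowright T$ is continuous and $\pi_1(\Ag,i)$ carries the topology of pointwise convergence on $T$. For properness, any compact $K\subset\pi_1(\Ag,i)$ has finite orbit on any fixed vertex $\tilde v$ (as $\pi_1(\Ag,i)$ acts properly on $T$), so $\varphi^{-1}(K)\cdot\tilde v\subset K\cdot\tilde v$ is finite, and $\varphi^{-1}(K)$ is contained in finitely many cosets of the compact stabilizer $G_{\tilde v}$; being also closed in $G$, it is compact. The image $\varphi(G)$ is closed because a continuous proper map between locally compact Hausdorff groups is closed; it is cocompact because $\varphi(G)$ and $\pi_1(\Ag,i)$ act transitively on the same fibers of $p$, hence $\pi_1(\Ag,i)=\varphi(G)\cdot\pi_1(\Ag,i)_{\tilde v}$ with $\pi_1(\Ag,i)_{\tilde v}$ compact. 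The whole argument is essentially bookkeeping; the only nontrivial input is the fiber/index matching in the covering check, which is precisely the equivalence between groupings and edge-indexed graphs recalled earlier in the paper.
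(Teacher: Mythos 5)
Your proof is correct and follows exactly the route the paper takes: it observes that $p:(T,\Trivial)\to(\Ag,i)$ is a covering, identifies $T$ with the universal cover $\widetilde{(\Ag,i)}$ by uniqueness, and reads off $\varphi$ from the deck-transformation action of $G$. The paper leaves all of this as a one-sentence remark, so your version simply supplies the fiber-counting and copci verifications that the paper omits.
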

	Indeed, the tree $T$ endowed with the trivial indexing covers the edge-indexed graph $(\Ag,i)$, and identifying $T$ with the universal cover $\widetilde{(\Ag,i)}$ yields the desired homomorphism $\varphi : G \to \pi_1(\Ag,i)$.

	\begin{lemma}[{\cite[Lemma 18]{MosherSageevWhyte02}}] \label{lemma:covering} Any covering map $p : (\Ag,i) \to (\Bg,j)$ of finite connected edge-indexed graphs induces a copci homomorphism $p^* : \pi_1(\Ag,i) \to \pi_1(\Bg,j)$.
	\end{lemma}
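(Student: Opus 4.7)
The plan is to identify both universal covering trees of $(\Ag,i)$ and $(\Bg,j)$ with a single tree $T$, and then exhibit $\pi_1(\Ag,i)$ as a closed cocompact subgroup of $\pi_1(\Bg,j)$ sitting inside $\Aut(T)$.

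First, I would check that the composition of two covers of edge-indexed graphs is itself a cover. Writing $\tilde p_A : T \to (\Ag,i)$ for the universal cover, fix a vertex $v \in VT$ and an edge $e' \in EB$ starting at $(p \circ \tilde p_A)(v)$. Edges in $T$ starting at $v$ and mapping to $e'$ are partitioned by their $\tilde p_A$-image $e \in EA$; because $\tilde p_A$ is a cover and $T$ carries the trivial indexing, the number of lifts above each such $e$ equals $i(e)$. Summing over edges $e \in EA$ at $\tilde p_A(v)$ with $p(e) = e'$ yields $\sum_e i(e) = j(e')$, as required. The essential uniqueness of the universal covering tree (visible from the inductive construction recalled earlier) then provides an isomorphism $T \cong \widetilde{(\Bg,j)}$ intertwining the covering maps, and I would use this isomorphism to identify the two trees.

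The homomorphism $p^*$ is now defined tautologically: any deck transformation $\alpha$ of $T \to (\Ag,i)$ preserves the fibers of $p \circ \tilde p_A$, hence is also a deck transformation of the identified cover $T \to (\Bg,j)$. The resulting map $p^* : \pi_1(\Ag,i) \to \pi_1(\Bg,j)$ is clearly injective, and since both groups carry the topology of pointwise convergence on $T$ it is a continuous embedding onto the subgroup $\{\alpha \in \pi_1(\Bg,j) : \tilde p_A \circ \alpha = \tilde p_A\}$, which is closed by continuity of evaluation.

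To conclude that $p^*$ is copci, I would invoke the characterization recalled in Section~\ref{sec:lc_groups}: a continuous homomorphism into a l.c.\ group acting geometrically on a proper geodesic space is copci if and only if the induced action is itself geometric. Here $\pi_1(\Bg,j)$ acts geometrically on the locally finite tree $T$ because $(\Bg,j)$ is finite and connected, and the induced action of $\pi_1(\Ag,i)$ is also geometric since its quotient is the finite graph $(\Ag,i)$. The only mildly delicate step in the whole argument is the double-counting together with the appeal to uniqueness of the universal cover; once the two deck-transformation groups are realized as subgroups of $\Aut(T)$, everything else is formal.
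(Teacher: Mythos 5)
Your argument is correct. The paper itself gives no proof of this lemma --- it is quoted from Mosher--Sageev--Whyte --- and your route (show $p\circ\tilde p_A : (T,\Trivial)\to(\Bg,j)$ is again a cover by the double count $\sum_{e} i(e)=j(e')$, identify $T$ with $\widetilde{(\Bg,j)}$ by uniqueness of the universal cover, observe that the deck group of $(\Ag,i)$ is then a closed subgroup of that of $(\Bg,j)$, and conclude copci from the fact that both groups act geometrically on $T$) is exactly the standard argument underlying the cited result.
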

	
	Remark that conversely, copci homomorphisms induce covering maps between edge-indexed graphs as follows. Let $G$ be a l.c.\ group acting geometrically without inversions on a locally finite tree $T$ and let $(\Ag,i)$ be the quotient edge-indexed graph. Then any copci homomorphism $H \to G$ induces a geometric action of $H$ on $T$, and the quotient edge-indexed graph $(\Bg,j)$ is finite and covers $(\Ag,i)$ (by mapping each $H$-orbit to the $G$-orbit that contains it).
	
	\subsection{The modular homomorphism}
	Let $(\Ag,i)$ be a finite connected edge-indexed graph. Given a combinatorial path $e_1 e_2 \ldots e_n$ define \[\Delta(e_1 e_2 \ldots e_n) = \frac{i(e_1) i(e_2) \ldots i(e_n)}{i(\overline{e_1}) i(\overline{e_2}) \ldots i(\overline{e_n})}.\]	Clearly $\Delta$ is invariant under homotopy of paths relative to the endpoints, so that picking a basepoint $v \in V\Ag$ yields a homomorphism $\Delta : \pi_1(\Ag,v) \to \QQ^\times$ called the \textbf{modular homomorphism}.
	
	Let $T$ be a geometric $G$-tree without inversions, let $\Af$ denote the quotient graph of groups and let $(\Ag,i)$ be the quotient edge-indexed graph. Then the modular homomorphism defined above agrees with the modular homomorphism $\Delta^G : G \to \RR^\times$ in the sense of Haar measure. Indeed, there is a quotient map $p : G = \pi_1(\Af) \to \pi_1(\Ag)$ and the modular functions satify $\Delta^G = \Delta \circ p$ as shown in \cite[(3.6)]{BassKulkarni90}.
	
	We call $G$ and $(\Ag,i)$ \textbf{unimodular} if the modular homomorphism is trivial. Besides Theorem~\ref{theorem:BassKulkarni} we shall only need the following simple observations. If $\Ag$ is a tree, then $(\Ag,i)$ is unimodular. In particular, the automorphism group of a (bi)regular tree is unimodular.  
		
	\section{Deformations of locally finite trees} \label{sec:deformations}
	
	\subsection{Collapsing an edge with trivial index} 
	Let $(\Ag,i)$ be an edge-indexed graph, and let $e$ be a non-loop edge (i.e. $\alpha(e) \neq \omega(e)$) with $i(e) = 1$. We say that $(\Bg,j)$ is obtained from $(\Ag,i)$ by \textbf{collapsing} $e$ if the following holds : 
	\begin{itemize}
		\item $V\Bg = V\Ag/(\alpha(e) \sim \omega(e))$
		\item $E\Bg = E\Ag \backslash \{e, \overline e\}$
		\item Edge inversion in $\Bg$ is the restriction of edge inversion in $\Ag$.
		\item Initial and terminal vertices in $\Bg$ are defined naturally via the corresponding maps in $\Ag$ composed with the natural quotient map $V\Ag \to V\Bg$.
		\item Let $f \in E\Ag \backslash \{e, \overline e\}$. If $\alpha(f) = \alpha(e)$ then define $j(f) = i(f) i(\overline e)$. Otherwise define $j(f) = i(f)$.
	\end{itemize}
	\[ \begin{xy}
	\xymatrix{ \ar@{-}[dr]_>>>{a} & & & & \ar@{-}[dl]^>>>{b} & & \ar@{-}[dr]_>>>{na} & & \ar@{-}[dl]^>>>{b} \\ 
	& \bullet \ar@{-}[rr]_<<{1}_>>{n}_{\{e,\overline e\}} & &  \bullet &  &  \Rightarrow & & \bullet \\
	\ar@{-}[ur]^>>>{c} & & & & \ar@{-}[ul]_>>>{d} & & \ar@{-}[ur]^>>>{nc} & & \ar@{-}[ul]_>>>{d} \\ 	}\end{xy}\]
	The inverse operation of collapsing is called \textbf{expanding}.
	\begin{lemma}[{\cite[Lemma 19]{MosherSageevWhyte02}}] \label{lemma:collapse} Suppose $(\Bg,j)$ is obtained from $(\Ag,i)$ by collapsing the edge $e$. Then there is a natural injective copci $\varphi : \pi_1(\Ag,i) \to \pi_1(\Bg,j)$. If moreover $i(\overline e) = 1$ then $\varphi$ is an isomorphism.
	\end{lemma}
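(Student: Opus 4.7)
The plan is to construct $\varphi$ via the universal covers. Since $i(e)=1$, every vertex $\tilde v$ of $T=\widetilde{(\Ag,i)}$ lying over $\alpha(e)$ admits a unique lift of $e$ emanating from it; I will collapse all these pairwise disjoint lifts simultaneously via a quotient map $q:T\to T_0$. The quotient $T_0$ is still a tree, and the covering $p:T\to\Ag$ descends to a map $p_0:T_0\to\Bg$. Using Remark~\ref{remark:recognition} I then check that $p_0:(T_0,\Trivial)\to(\Bg,j)$ is an edge-indexed cover: a fiber $q^{-1}(\tilde x)\subset T$ consists of one vertex over $\omega(e)$ together with its $i(\overline e)$ neighbors over $\alpha(e)$ via lifts of $e$, so the number of lifts of an edge $f\neq e,\overline e$ emanating from such a class is either $i(f)$ (if $\alpha(f)=\omega(e)$) or $i(f)\cdot i(\overline e)$ (if $\alpha(f)=\alpha(e)$), matching precisely the defining formula for $j(f)$ in the collapse. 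By uniqueness of the universal cover, $T_0\cong\widetilde{(\Bg,j)}$.

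Next, every $\alpha\in\pi_1(\Ag,i)$ satisfies $p\alpha=p$ and so permutes the set of lifts of $e$ in $T$; it therefore descends through $q$ to a deck transformation $\varphi(\alpha)\in\pi_1(\Bg,j)$. The map $\varphi$ is continuous because $q$ is bijective on edges outside the collapsed set, so pointwise convergence on finite subtrees of $T_0$ is controlled by pointwise convergence on finite subtrees of $T$. It is proper because the preimage of the pointwise stabilizer of a finite $K_0\subset T_0$ is the set-stabilizer in $\pi_1(\Ag,i)$ of the finite subtree $q^{-1}(K_0)\subset T$, which is a finite-index overgroup of a compact stabilizer, hence compact. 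The image is cocompact because it acts on $T_0$ with finite quotient $\Bg$.

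For injectivity, any $\alpha\in\ker\varphi$ fixes every edge of $T$ not over $e$ or $\overline e$ (since those edges are in bijection with edges of $T_0$), and hence fixes every vertex of $T$ incident to such an edge; after normalizing $(\Ag,i)$ to have no redundant leaf (i.e.\ no degree-one vertex whose unique incident edge has trivial index), this forces $\alpha=\mathrm{id}$. Finally, when $i(\overline e)=1$ each collapsed class has exactly two vertices $\{\tilde v,c(\tilde v)\}$, yielding a canonical bijection $p^{-1}(\alpha(e))\leftrightarrow p^{-1}(\omega(e))$; any deck transformation of $T_0$ lifts uniquely through this canonical pairing to one of $T$, which provides a two-sided inverse to $\varphi$. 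The main technical point in the whole argument is the index verification that identifies $T_0$ with $\widetilde{(\Bg,j)}$, since everything else follows once one knows the collapse creates no new deck transformations beyond those factored from $T$ through $q$.
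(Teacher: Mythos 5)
The paper does not actually prove this lemma: it cites Mosher--Sageev--Whyte and records only the one-sentence mechanism that the preimage of $e$ in $\widetilde{(\Ag,i)}$ is a $\pi_1(\Ag,i)$-invariant disjoint union of finite subtrees whose contraction yields $\widetilde{(\Bg,j)}$, and your argument is a correct and complete fleshing-out of exactly that mechanism (the index count identifying the quotient tree with $\widetilde{(\Bg,j)}$, the descent of deck transformations, and the copci verifications are all right). One substantive remark: the normalization you slip in before the injectivity step is not cosmetic. If $e$ is the unique edge issuing from $\alpha(e)$, then the vertices of $T=\widetilde{(\Ag,i)}$ over $\alpha(e)$ are leaves, and for $i(\overline e)\geq 2$ the deck group contains nontrivial permutations of the $i(\overline e)$ leaves attached to a fixed vertex over $\omega(e)$; these lie in $\ker\varphi$, so the descended map genuinely fails to be injective in that degenerate case (which is precisely the ``minimality'' reduction of Section~\ref{sec:defn_space_invariants} and never occurs in the paper's applications, but is not excluded by the statement as written). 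You were right to flag it; note also that when $i(\overline e)=1$ each star has a unique vertex over $\alpha(e)$, so the isomorphism claim is unaffected by this issue.
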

	If $(\Bg,j)$ is obtained from $(\Ag,i)$ by collapsing the edge $e$, then the preimage of $e$ in $\widetilde{(\Ag,i)}$ is a $\pi_1(\Ag,i)$-invariant disjoint union of finite subtrees, and  the universal cover $\widetilde{(\Bg,j)}$ is obtained from $\widetilde{(\Ag,i)}$ by contracting each such finite subtree to a point.

	\subsection{A key example}
		\label{section:key_example} For each $k \geq 0$ we define
			\[ (A,i_k) = \begin{xy}
			 \xymatrix{ 
			 \bullet \ar@{-}[r]^<{2}^>{2^k} & \bullet \ar@{-}@(ur,dr)^<<{1}^>>{2}
			 &} \end{xy} \text{ and } (B,j_k) = \begin{xy}
			 \xymatrix{ 
			 \bullet \ar@{-}[r]^<{2}^>{2^k} & \bullet \ar@{-}@/_/[r]_<{1}_>{1} \ar@{-}@/^/[r]^<{1}^>{2} & \bullet
			 &} \end{xy} \]
			 There is a sequence of collapses and expansions
			 \[ {\begin{xy}\xymatrix{ 
			 \bullet \ar@{-}@(ur,dr)^<{2}^>{4}
			 }\end{xy}} \phantom{aaa} \to (A,i_0) \to (B,i_0) \to (A,i_1) \to (B,i_1) \to \ldots \]
			 
			 In view of the isomorphism $\ZZ_2 \cong \ZZ_2 \ast_{2 \ZZ_2} 2 \ZZ_2$ there are isomorphisms of the corresponding procyclic groupings
		\[G_{2,4} = \pi_1(\ZZ_2 (\begin{xy}\xymatrix{ 
			 \bullet \ar@{-}@(ur,dr)^<{2}^>{4}
			 }
			 \end{xy}\ \ \ \ )) \cong \pi_1( \ZZ_2 (A,i_0)) \cong \pi_1( \ZZ_2 (B,i_0)) \cong \ldots \cong \pi_1( \ZZ_2 (A,i_k)) \cong \ldots\]

			 Observe that the universal covering tree $\widetilde{(A,i_k)}$ is a subdivision of the regular tree of degree $(2^k+2+1)$. Hence $G_{2,4}$ acts geometrically and minimally on regular trees of arbitrarily large degree.
	
	\subsection{Deformation spaces and their invariants} \label{sec:defn_space_invariants}
		We shall rely on the following special case of Forester's Deformation Theorem \cite{Forester02}, see also \cite{BassLubotzky94, MosherSageevWhyte02} for similar results.
		\begin{theorem} \label{theorem:deformation} If a l.c.\ group $G$ acts geometrically (without inversions) on two trees $T_1, T_2$ then the quotient edge-indexed graphs are related by a finite sequence of collapses and expansions.
		\end{theorem}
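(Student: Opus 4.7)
My plan is to reduce the statement to Forester's Deformation Theorem for graphs of groups by verifying that the two cocompact $G$-trees $T_1, T_2$ have the same family of elliptic subgroups, then translating the resulting chain of elementary graph-of-groups moves into collapses and expansions of edge-indexed graphs.

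First I would set up the correspondence. Each geometric action $G \curvearrowright T_i$ yields a quotient graph of groups $\Af^{(i)}$ with compact vertex and edge groups and open edge monomorphisms; its associated edge-indexed graph is precisely the quotient $(\Ag^{(i)}, i^{(i)})$ appearing in the statement. An elementary collapse of $\Af^{(i)}$ along a non-loop edge whose inclusion on one side is an isomorphism corresponds, on the level of indexed graphs, to a collapse of an edge $e$ with $i(e)=1$ in the sense of Section~\ref{sec:deformations}; expansions correspond similarly. Thus it suffices to produce a finite sequence of elementary deformations of graphs of groups connecting $\Af^{(1)}$ to $\Af^{(2)}$.

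Second, I would show that $T_1$ and $T_2$ share the same elliptic subgroups. For each $i$, a subgroup $H \leq G$ acts elliptically on $T_i$ (i.e.\ fixes a vertex) if and only if $H$ has a bounded orbit on $T_i$, by the fixed-point theorem for group actions on trees combined with the no-inversions assumption. Since the action is proper with compact open vertex stabilizers on a locally finite tree, a bounded $H$-orbit is forced to be finite, which in turn forces $\overline H$ to be compact; conversely if $\overline H$ is compact it acts continuously and without inversions on $T_i$ and therefore fixes a vertex. The condition that $\overline H$ is compact in $G$ is intrinsic to $G$, so the elliptic families on $T_1$ and $T_2$ coincide.

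With this in hand, Forester's Deformation Theorem \cite{Forester02} (applied after, if necessary, passing to the essentially unique minimal cocompact invariant subtree, which does not affect the quotient edge-indexed graph in any essential way) produces a finite sequence of elementary collapse and expansion moves of graphs of groups connecting $\Af^{(1)}$ to $\Af^{(2)}$. Translating via the correspondence above yields the required sequence of collapses and expansions on the edge-indexed level. The step requiring the most care is checking that Forester's theorem, originally formulated in the discrete setting, applies without modification here: the moves themselves are combinatorial operations on the quotient graph, and the only topological content to verify is that at each stage the vertex and edge groups remain compact and the edge monomorphisms remain open, which is a routine inspection of the definitions of collapse and expansion since these moves amalgamate or unfold compact groups along already-open inclusions.
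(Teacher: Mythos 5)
Your proposal is correct and follows essentially the same route as the paper: the paper also simply invokes Forester's Deformation Theorem and notes that its hypothesis is satisfied because, for a geometric action on a locally finite tree, a subgroup is elliptic if and only if it has compact closure in $G$, a condition intrinsic to $G$ and hence common to $T_1$ and $T_2$. Your additional verifications (bounded orbits are finite by local finiteness, properness forces compact closure, and the translation between graph-of-groups moves and edge-indexed collapses/expansions) are exactly the routine checks the paper leaves implicit.
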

		Forester's result is much more general: there is no continuity nor properness assumption, and trees need not be locally finite. Instead, it is required that both actions yield the same elliptic subgroups. In our case, a subgroup $H < G$ is elliptic (for its action on either tree) if and only if it it has compact closure in $G$.
		\begin{definition} A finite edge-indexed graph $(\Ag,i)$ is called \textbf{minimal} if it  has no vertex with a single outgoing edge of index $1$. A vertex of $(\Ag,i)$ is \textbf{inessential} if it has exactly two outgoing edges $e,e'$ of index $1$ such that $\overline e \neq e'$. We further call $(\Ag,i)$ \textbf{essential} if it is minimal and has no inessential vertex. Given a finite edge-indexed graph $(\Ag,i)$ we denote by $(\Ag,i)_{\text{min}}$ the minimal edge-indexed graph obtained from $(\Ag,i)$ by removing successively all vertices with a single outgoing  edge of degree $1$. We denote by $(\Ag,i)_{\text{ess}}$ the essential edge-indexed graph obtained from $(\Ag,i)_{\text{min}}$ by removing successively all inessential vertices.
		\end{definition}
		
		\begin{remark} \label{remark:essential_deformation} If two finite edge-indexed graphs $(\Ag,i)$ and $(\Bg,j)$ are related by a finite sequence of collapses and expansions, then $(\Ag,i)_{\text{min}}$ and $(\Bg,j)_{\text{min}}$ (resp. $(\Ag,i)_{\text{ess}}$ and $(\Bg,j)_{\text{ess}}$) are related by a finite sequence of collapses and expansions between minimal (resp. essential) edge-indexed graphs. \hfill \qed
		\end{remark}
		\begin{remark} Let $G \in \Tree$, let $T$ be a geometric $G$-tree without inversions and let $(\Ag,i)$ be the quotient (finite connected) edge-indexed graph. Then the action of $G$ on $T$ is minimal (in the sense that it admits no proper invariant subtree) if and only if $T$ has no vertex of degree $1$ if and only if $(\Ag,i)$ is minimal. In particular, if two l.c.\ groups act geometrically on a tree $T$, then either both actions are minimal or none is.
		
			It is incorrect to describe $(\Ag,i)$ being essential in terms of $T$ having no vertex of degree $2$. Indeed, removing vertices of degree $2$ from a given $G$-tree may introduce edge-inversions. Being essential cannot be expressed solely in terms of $T$: it also depends on the action of $G$.
		\end{remark}
		
		Given a minimal geometric $G$-tree $T$ without inversions with quotient edge-indexed graph $(\Ag,i)$, define the set of \textbf{invariant primes} $P_G$ to be the set of primes dividing $i(e)$ for some $e \in E\Ag$. The notation is justified by the following:
		\begin{corollary} \label{corollary:invariant_primes} Given $G \in \Tree$ the set of invariant primes $P_G$ is independent of the minimal geometric $G$-tree chosen above, and hence is an invariant of $G$.
		\end{corollary}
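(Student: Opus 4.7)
The strategy is to invoke Forester's Deformation Theorem to reduce the statement to checking that a single collapse between two minimal edge-indexed graphs preserves the set of primes appearing in the indexing. Given two minimal geometric $G$-trees without inversions $T_1, T_2$ with quotient edge-indexed graphs $(\Ag_1, i_1)$ and $(\Ag_2, i_2)$, Theorem~\ref{theorem:deformation} yields a finite sequence of collapses and expansions relating them, and Remark~\ref{remark:essential_deformation} lets us require every intermediate edge-indexed graph in the sequence to be minimal. Since an expansion is the inverse of a collapse, it suffices to prove: if $(\Bg, j)$ is obtained from a minimal $(\Ag, i)$ by collapsing a non-loop edge $e$ with $i(e) = 1$ (and $(\Bg, j)$ is again minimal), then $(\Ag, i)$ and $(\Bg, j)$ have the same set of primes dividing some index.

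Write $P(\Ag, i)$ for this set of primes, and similarly for $P(\Bg, j)$. The inclusion $P(\Bg, j) \subseteq P(\Ag, i)$ is immediate, since every index $j(f)$ equals $i(f)$ or $i(f) \cdot i(\overline e)$, a product of indices from $(\Ag, i)$. For the reverse inclusion, let $p \mid i(e')$ for some $e' \in E\Ag$. If $e' \notin \{e, \overline{e}\}$ then $e'$ survives to $(\Bg, j)$ with $i(e') \mid j(e')$, so $p \in P(\Bg, j)$; the case $e' = e$ is vacuous since $i(e) = 1$. The remaining case $e' = \overline{e}$ is where minimality enters: $e$ is an outgoing edge at $\alpha(e)$ with $i(e) = 1$, so minimality of $(\Ag, i)$ forces the existence of another outgoing edge $f \neq e$ at $\alpha(e)$. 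Because $e$ is a non-loop, $f \neq \overline e$ as well, so $j(f) = i(f) \cdot i(\overline{e})$ is divisible by $p$, placing $p$ in $P(\Bg, j)$.

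The main obstacle is precisely this last case: without restricting to minimal graphs (equivalently, if one allowed collapsing an edge $e$ at a vertex where $e$ is the unique outgoing edge), the prime content of $i(\overline{e})$ would silently disappear from the indexing and invariance would fail. Once this minimality argument is in place, iterating along the chain of minimal edge-indexed graphs supplied by Theorem~\ref{theorem:deformation} and Remark~\ref{remark:essential_deformation} yields $P(\Ag_1, i_1) = P(\Ag_2, i_2)$, which is the claimed invariance of $P_G$.
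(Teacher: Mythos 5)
Your proposal is correct and follows the same route as the paper: reduce via Forester's Deformation Theorem and Remark~\ref{remark:essential_deformation} to a single collapse between minimal edge-indexed graphs, and then observe that minimality supplies a second outgoing edge $f$ at $\alpha(e)$ which absorbs the factor $i(\overline e)$, so no prime is lost. Your write-up is in fact slightly more explicit than the paper's (which compresses both inclusions into ``the result now follows''), but the key idea is identical.
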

		\begin{proof} In view of Theorem~\ref{theorem:deformation} and Remark~\ref{remark:essential_deformation} it is enough to show that if $(\Ag,i)$ and $(\Bg,j)$ are minimal edge-indexed graphs and $(\Bg,j)$ is obtained from $(\Ag,i)$ by collapsing the edge $e \in E\Ag$ then both edge-indexed graphs yield the same set of invariant primes. But since $(\Bg,j)$ is minimal there exists an edge $f \neq e \in E\Ag$ such that $\alpha(e) = \alpha(f)$ so that $j(f) = i(f) i(\overline e)$. The result now follows.
		\end{proof}
		
		\subsection{Consequences for minimal actions}
		
		\begin{lemma} \label{lemma:bound_vertex_degree_1} Let $T$ be a minimal geometric $G$-tree, let $\Af$ be the quotient graph of groups and let $(\Ag,i)$ be the quotient edge-indexed graph. Then the degree of any vertex of $\Ag$ is bounded by a constant depending only on $G$.
		\end{lemma}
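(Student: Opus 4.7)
The plan is to bound the vertex degrees of $(\Ag,i)$ in two steps: first, reduce to bounding the degrees in the essential edge-indexed graph $(\Ag,i)_{\text{ess}}$; second, bound the number of edges of this essential graph via an accessibility-type theorem for $G$.

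For the first step, recall that $(\Ag,i)_{\text{ess}}$ is obtained from the (already minimal) graph $(\Ag,i)$ by iteratively removing inessential vertices. Each inessential vertex $v$ has exactly two outgoing edges $e, e'$ of index $1$ with $\bar e \neq e'$; removing $v$ amalgamates $e$ and $e'$ into a single new edge joining $\omega(e)$ and $\omega(e')$. A direct inspection shows that this operation preserves the degree of every other vertex: the outgoing edge $\bar e$ at $\omega(e)$ (pointing at $v$) is replaced by the corresponding half of the merged edge at $\omega(e)$, and similarly at $\omega(e')$ (and the argument is the same when $\omega(e) = \omega(e')$, where the merged edge becomes a loop contributing $2$ to the degree). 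Consequently $\max_v \deg_{(\Ag,i)}(v) = \max\bigl(2, \max_v \deg_{(\Ag,i)_{\text{ess}}}(v)\bigr)$, so it is enough to bound the maximum vertex degree of $(\Ag,i)_{\text{ess}}$, and since the sum of vertex degrees of a finite graph equals twice the number of edges, it suffices to bound the number of edges of $(\Ag,i)_{\text{ess}}$ in terms of $G$.

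For the second step, Forester's Deformation Theorem (Theorem~\ref{theorem:deformation}) together with Remark~\ref{remark:essential_deformation} ensures that all essential edge-indexed graphs arising from minimal geometric $G$-trees are related by a sequence of collapses and expansions between essential graphs. An accessibility-type result—namely Dunwoody's accessibility for finitely presented groups, together with its extension to the compactly generated locally compact setting via the work of Thomassen-Woess cited in Section~\ref{sec:commability}—then yields a uniform bound on the number of edges of any such $(\Ag,i)_{\text{ess}}$. Combined with the first step, this furnishes the required constant $N_G$.

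The main obstacle lies in making the accessibility step precise: the paper's notion of \emph{essential} edge-indexed graph is weaker than the classical notion of \emph{reduced} splitting used in Dunwoody-style accessibility, because essential graphs may carry non-loop edges $e$ with $i(e) = i(\bar e) = 1$ as long as no vertex is inessential. Handling these residual edges—either by performing further collapses which preserve $\pi_1$ by Lemma~\ref{lemma:collapse} in order to reduce to a reduced splitting, or by directly proving accessibility at the level of essential graphs—is the key technical ingredient.
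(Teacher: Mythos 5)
Your first step is a valid reduction: removing an inessential vertex merges it into a neighbour without changing the degree of any other vertex, so it would indeed suffice to control $(\Ag,i)_{\text{ess}}$. The second step, however, is a genuine gap, and you have in effect flagged it yourself: the uniform bound on the number of edges of essential quotient graphs is never established, and the results you invoke do not supply it. Dunwoody-type accessibility (and the Thomassen--Woess extension mentioned in Section~\ref{sec:commability}) produces \emph{one} terminal decomposition of $G$ over compact open subgroups; it does not bound the complexity of \emph{all} minimal decompositions lying in the same deformation space, which is what your strategy requires. Bounding the size of all reduced or essential graphs within a fixed deformation space is a different and delicate problem (this is the territory of Forester and Guirardel--Levitt), and in the non-unimodular case --- exactly the case of interest in this paper, cf.\ $G_{2,4}$ --- the deformation space is ascending, which is precisely where such uniformity statements are hardest. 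So the ``key technical ingredient'' you defer is the entire content of the lemma under your approach, and your proposed route aims at a strictly stronger statement (a bound on $|E\Ag_{\text{ess}}|$) than what is needed.

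The paper's proof is elementary and deliberately avoids bounding the total size of $\Ag$. It identifies two quantities of the finite connected graph $\Ag$ that depend only on $G$: the first Betti number $b_1$, because $\pi_1(\Ag)$ is the free quotient of $G$ by the (normal, closed) subgroup generated by all compact subgroups, hence its rank is a $G$-invariant; and the number $V_1$ of degree-one vertices, because minimality forces the vertex group at each leaf to be a maximal compact subgroup of $G$, distinct leaves give non-conjugate ones, and $G$ has only finitely many conjugacy classes of maximal compact subgroups. The Euler-characteristic identity $\sum_v (\deg v - 2) = 2b_1 - 2$ then yields $\deg v_0 \leq V_1 + 2b_1$ for every vertex $v_0$. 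Note that this bound is perfectly compatible with $\Ag$ having arbitrarily many degree-two vertices (e.g.\ subdivision vertices), which is why the paper only needs to control the number of leaves and the Betti number rather than the number of edges. If you want to salvage your approach, you would need to prove the uniform bound on essential graphs in a fixed deformation space directly; as written, that step is missing.
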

		\begin{proof} First we observe that the kernel of the quotient map $G = \pi_1(\Af) \to \pi_1(\Ag)$ is the subgroup generated by elliptic subgroups. A subgroup of $G$ is elliptic if and only if it is compact, so that $\pi_1(\Ag) \cong G / \langle \cup \{K < G \text{ compact} \} \rangle$. In particular, the first Betti number of the quotient graph  $\Ag$ is independent of $T$.
		
		Next, since the action is minimal the vertex group of each vertex of degree $1$ of $\Ag$ is a maximal compact subgroup of $G$, and moreover the vertex groups of any two such distinct vertices are non-conjugate in $G$. Thus the number of vertices of degree $1$ of the graph $\Ag$ is bounded by the number of conjugacy classes of maximal compact subgroups of $G$, which is finite.
		
		Finally the degree of any vertex of the graph of $\Ag$ is bounded above by $V_1 + 2b_1$ where $V_1$ is the number of vertices of degree $1$ in $\Ag$ and $b_1$ is the first Betti number of $\Ag$ which as seen above only depend on $G$.
		\end{proof}
		Although not directly useful for our purposes, we note that the above lemma imposes rather strong conditions on the possible degrees of vertices of any minimal geometric $G$-tree:
		\begin{prop} Given a group $G \in \Tree$, there is a number $K$ such that for any minimal geometric $G$-tree $T$, the degree of any vertex is of the form \[\sum_{k \in \{1,\ldots,K_0\}} \prod_{p \in P_G} p^{j_k} \]
		for some $K_0 \leq K$ and for integers $j_k \in \NN$. \qed
		\end{prop}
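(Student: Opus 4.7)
The plan is to read the degree of a vertex of $T$ directly off the quotient edge-indexed graph via the covering property, and then invoke the two ingredients already at hand: the bound on branching in the quotient graph (Lemma~\ref{lemma:bound_vertex_degree_1}) and the invariance of the set of primes appearing in the indices (Corollary~\ref{corollary:invariant_primes}).

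First, I would set up notation. Fix a minimal geometric $G$-tree $T$ (without inversions, which may be arranged by subdividing at cost of a constant). Let $\Af$ be the quotient graph of groups, $(\Ag,i)$ the associated edge-indexed graph, and $p : T \to \Ag$ the quotient map. For any vertex $\tilde v \in VT$, set $v = p(\tilde v)$. By the covering property of edge-indexed graphs (the defining identity $\sum_{f \in p^{-1}(e) \cap \alpha^{-1}(\tilde v)} 1 = i(e)$ applied to the trivially indexed cover $T \to (\Ag,i)$), the number of edges of $T$ with initial vertex $\tilde v$ lying above a given edge $e \in EA$ with $\alpha(e) = v$ is exactly $i(e)$. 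Summing over all outgoing edges at $v$ gives
\[
\deg_T(\tilde v) \;=\; \sum_{e \in EA,\ \alpha(e) = v} i(e).
\]

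Second, I would control the number of summands. The number $K_0 := \deg_\Ag(v)$ of outgoing edges at $v$ is at most the maximum vertex degree of $\Ag$, and by Lemma~\ref{lemma:bound_vertex_degree_1} this is bounded by a constant $K$ depending only on $G$ (and not on the chosen minimal geometric $G$-tree $T$).

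Third, I would identify the prime factors of each $i(e)$ with elements of $P_G$. By Corollary~\ref{corollary:invariant_primes}, the set $P_G$ of primes appearing in the indices of a minimal geometric $G$-tree is an invariant of $G$; applied to our specific tree $T$, every prime dividing any $i(e)$ lies in $P_G$. Consequently each $i(e)$ admits a factorization $i(e) = \prod_{p \in P_G} p^{j_e(p)}$ with $j_e(p) \in \NN$. Substituting into the degree formula yields
\[
\deg_T(\tilde v) \;=\; \sum_{k=1}^{K_0}\ \prod_{p \in P_G} p^{j_k},
\]
with $K_0 \leq K$, which is the stated form. There is essentially no obstacle here: the proposition is a direct synthesis of the covering formula for degrees, the uniform bound on branching in quotient graphs of minimal $G$-trees, and the invariance of $P_G$.
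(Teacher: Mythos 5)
Your argument is correct and is precisely the intended one: the paper states this proposition with no written proof (just a \qed) as an immediate consequence of Lemma~\ref{lemma:bound_vertex_degree_1} together with the definition of $P_G$ and Corollary~\ref{corollary:invariant_primes}, which is exactly the synthesis you carry out (degree of a lift equals the sum of the indices of the outgoing edges at its image, the number of summands is bounded by the lemma, and each index factors over $P_G$).
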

		Using the terminology from Section~\ref{sec:primes_not_in_s_2_N} the conclusion can be rephrased as follows: there are $K$ and $N = \max P_G$ such that the degree of any vertex is in $\mathcal S_{K,N}$. 
		
		We show that unimodular groups enjoy much stronger restrictions.
		\begin{prop}\label{prop:bound_on_degree} Let $U\in \Tree$ be a unimodular group. Then there is a natural number $N_U$ such that if $T$ is a geometric minimal $U$-tree then all vertices of $T$ have degree at most $N_U$.
		\end{prop}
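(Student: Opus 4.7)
The plan is to reduce bounding vertex degrees in $T$ to bounding edge-indices in the quotient, and then to invoke unimodularity through Forester's uniqueness theorem for reduced trees. Concretely, the degree in $T$ of a vertex above $v \in V\Ag$ equals $\sum_{e : \alpha(e) = v} i(e)$. Lemma~\ref{lemma:bound_vertex_degree_1} already bounds the valence of $v$ in $\Ag$ by a constant $D$ depending only on $U$, so it suffices to produce a uniform bound $N_0$ on $\max_{e \in E\Ag} i(e)$ as $T$ ranges over all minimal geometric $U$-trees.

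First, I would verify that unimodularity forces the deformation space of minimal geometric $U$-trees to be non-ascending in the sense of Forester. Indeed, for any compact open subgroup $K < U$ and any $t \in U$, the identity $\mu(tKt^{-1}) = \mu(K)$ forces $tKt^{-1} \subseteq K$ to be an equality. Translated to edge-indexed graphs, this precisely says that on every loop edge $e$ of any quotient $(\Ag, i)$ the modular ratio $i(e)/i(\overline e)$ equals $1$; combined with the fact that non-loop edges cannot support an ascending conjugation (since they connect vertices in distinct $U$-orbits), no ascending edge can appear in any quotient.

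Next, by Forester's uniqueness theorem applied to this non-ascending deformation space, there is a unique (up to $U$-equivariant isomorphism) reduced representative $(\Ag_0, i_0)$, that is, a minimal quotient in which no non-loop edge has either of its two indices equal to $1$. I then set $N_0 := \max_{e \in E\Ag_0} i_0(e)$, a number depending only on $U$. Given any minimal geometric $U$-tree $T$ with quotient $(\Ag, i)$, iteratively collapsing non-loop edges of index $1$ terminates at a reduced graph, which must be isomorphic to $(\Ag_0, i_0)$ by uniqueness. The collapse formula $j(f) = i(f) \cdot i(\overline e)$ for $f$ at $\alpha(e)$ (together with $j(f) = i(f)$ for all other edges) shows that indices are monotonically non-decreasing along the reduction, so $\max_{e \in E\Ag} i(e) \leq N_0$. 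Taking $N_U := D \cdot N_0$ completes the bound.

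The principal subtlety is the appeal to Forester's uniqueness theorem: what requires careful checking is that unimodularity of $U$'s Haar measure implies the ``non-ascending'' hypothesis in the precise form required by Forester, which is exactly the Haar-measure argument of the second paragraph. The remaining ingredients (Lemma~\ref{lemma:bound_vertex_degree_1}, Theorem~\ref{theorem:deformation}, and the collapse formula preceding Lemma~\ref{lemma:collapse}) are already recorded earlier in the paper, so once non-ascending is in hand the proof assembles directly.
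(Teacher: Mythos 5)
Your reduction of the problem to a uniform bound on the edge indices of quotient edge-indexed graphs is sound (via Lemma~\ref{lemma:bound_vertex_degree_1}), and your Haar-measure argument that unimodularity excludes ascending edges is correct. The gap is the pivotal appeal to ``Forester's uniqueness theorem'': there is no theorem asserting that a non-ascending deformation space contains a \emph{unique} reduced tree. Forester's rigidity theorem gives uniqueness only under the much stronger hypothesis that the reduced tree is \emph{strongly slide-free}; for a merely non-ascending deformation space, what Forester and Guirardel--Levitt actually prove is that any two reduced trees are related by a finite sequence of \emph{slide moves}. Since a slide replaces an index $k$ by $kn$, slide-equivalence does not by itself bound $\max_{e} i(e)$ over all reduced representatives, so your constant $N_0$ is not well defined as stated. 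The argument could probably be repaired --- in a reduced unimodular edge-indexed graph any edge of index $1$ must be a loop whose reverse also has index $1$ (its modular ratio is trivial), so slides between reduced representatives do not change indices --- but this verification is exactly what is missing, and it is where the real content of your approach lies. (Your monotonicity claim for the collapse down to a reduced graph also silently uses minimality to ensure that the possibly large index $i(\overline e)$ of a collapsed edge gets absorbed into a surviving edge; that step is correct but deserves a sentence.)

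For comparison, the paper's proof avoids deformation-space uniqueness altogether: by Bass--Kulkarni (Theorem~\ref{theorem:BassKulkarni}), $U$ contains a discrete cocompact free subgroup $\Gamma \cong F_n$, which acts freely and cocompactly on any minimal geometric $U$-tree $T$; the quotient $\Gamma \backslash T$ is then a finite graph with fundamental group $F_n$ and no valence-one vertices, and an Euler-characteristic count bounds every vertex degree by $N_U = 2n$. This is shorter and sidesteps the slide-move issue entirely.
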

		\begin{proof} Although it is possible to give a direct proof, we shall use Bass-Kulkarni's result that $U$ contains a free discrete cocompact subgroup $\Gamma\cong F_n$. Fix a geometric minimal $U$-tree $T$. Since the action is cocompact and minimal it follows that $T$ has no vertex of degree $1$. Moreover $\Gamma$ acts freely and cocompactly on $T$, and the quotient (finite) graph $\Gamma \backslash T$ has fundamental group $\Gamma \cong F_n$ and no vertex of degree $1$. It now follows from an Euler characteristic argument that the degree of any vertex of $\Gamma \backslash T$ (and hence of $T$) is bounded by $N_U = 2n$.
		\end{proof}
		
		\begin{examples} The above proof shows that if $U=F_2$ acts minimally and geometrically on a tree $T$, then $T$ is a subdivision of the $3$ or $4$-regular tree. A slightly more careful argument shows that for $U=\PSL_2(\ZZ)$ any geometric minimal $U$-tree is a subdivision of the $3$-regular tree. 
		
		The conclusion of Proposition~\ref{prop:bound_on_degree} also holds for some non-unimodular groups. For example, one can show that for $p$ prime and 
		\[G=\QQ_p \rtimes \langle p \rangle  = \pi_1(\ZZ_p(\begin{xy}\xymatrix{ 
			 \bullet \ar@{-}@(ur,dr)^<{1}^>{p}
			 }
			 \end{xy}\ \ \ \ ))\]
			 any minimal, proper cocompact $G$-tree is a subdivision of the $p+1$-regular tree.
		\end{examples}
		
	\subsection{More moves}
	We introduce further moves in order to ease our arguments.
	\subsubsection{Edge blow-up}
	Let $(\Ag,i)$ be an edge-indexed graph, and let $e$ be an edge with $i(e) \neq 1$ and $i(\overline e) \neq 1$. We say that $(\Bg,j)$ is obtained from $(\Ag,i)$ by a \textbf{standard blow up} of the edge $e$ if the following holds : 
	\begin{itemize}
		\item $V\Bg = V\Ag$
		\item $E\Bg = E\Ag \sqcup \{e', \overline e'\}$
		\item $\alpha(e')  = \alpha(e)$ and $\alpha(\overline e') = \alpha(\overline e)$.
		\item If $f \in E\Ag \backslash \{e, \overline e\}$, define $j(f) = i(f)$. Further define \[j(e') = j(\overline e') = 1; j(e) = i(e)-1 \text{ and } j(\overline e) = i(\overline e) -1.\]
	\end{itemize}
	\[ \begin{xy}
	\xymatrix{ \bullet \ar@{-}[rr]^<{m}^>{n}^{\{e,\overline e\}} & &  \bullet & \Rightarrow & \bullet \ar@{-}@/^/[rr]^<<{1}^>>{1} \ar@{-}@/_/[rr]_<<<{m-1}_>>>{n-1} & & \bullet 	}\end{xy}\]
	It is clear that the natural map $p : (\Bg,j) \to (\Ag,i)$ identifying $e$ with $e'$ and $\overline e$ with $\overline e'$ is a covering map. 
	
	More generally, we say that $(\Bg,j)$ is obtained from $(\Ag,i)$ by a \textbf{blow up} of the edge $e$ if there is a covering map $p : (\Bg,j) \to (\Ag,i)$ such that $p$ restricts to an isomorphism $p : \Bg \backslash p^{-1}(\{e,\overline e\}) \to \Ag\backslash p^{-1} (\{e,\overline e\})$. In other words, we allow to blow up $e$ to possibly more than $2$ edges, and the indices may be arbitrary.
	
	The following lemma is a direct consequence of Lemma~\ref{lemma:covering}.
	\begin{lemma} \label{lemma:blow-up} Let $(\Ag,i)$ be a finite connected edge-indexed graph. If $(\Bg,j)$ is obtained from $(\Ag,i)$ by blowing up an edge, there is a injective copci $p^* : \pi_1(\Bg,j) \to \pi_1(\Ag,i)$.
	\end{lemma}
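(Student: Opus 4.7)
The plan is to read off the desired map directly from the definition of a blow-up and then combine Lemma~\ref{lemma:covering} with a short argument for injectivity.

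First, by the very definition of a blow-up given just before the statement, $(\Bg,j)$ comes equipped with a covering map of edge-indexed graphs $p:(\Bg,j)\to(\Ag,i)$. Lemma~\ref{lemma:covering} then produces a copci homomorphism $p^*:\pi_1(\Bg,j)\to\pi_1(\Ag,i)$, so the only thing left is to verify that $p^*$ is injective.

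For injectivity I will use the model of the universal cover via the procedure of Remark~\ref{remark:recognition} (or, equivalently, the inductive construction in the preceding remark). Write $p_A:\widetilde{(\Ag,i)}\to(\Ag,i)$ and $p_B:\widetilde{(\Bg,j)}\to(\Bg,j)$ for the universal covers. The composition $p\circ p_B:\widetilde{(\Bg,j)}\to(\Ag,i)$ is itself a cover of $(\Ag,i)$ by a tree with trivial indexing: away from the preimage of $\{e,\overline e\}$ this is clear since $p$ restricts to an isomorphism there, and over a lift of $\alpha(e)$ the defining index-sum condition on the cover $p$ combined with that of $p_B$ gives exactly $i(e)$ outgoing edges lifting $e$. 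By uniqueness of the universal cover, there is a canonical isomorphism $\Phi:\widetilde{(\Bg,j)}\to\widetilde{(\Ag,i)}$ with $p_A\circ\Phi=p\circ p_B$. Under this identification, $\pi_1(\Bg,j)$ is exactly the group of tree automorphisms commuting with $p_B$, while $\pi_1(\Ag,i)$ is the larger group of tree automorphisms commuting with $p_A$; any automorphism preserving the finer partition of vertices given by $p_B$ a fortiori preserves the coarser partition given by $p\circ p_B=p_A\circ\Phi$, which yields an inclusion $\pi_1(\Bg,j)\hookrightarrow\pi_1(\Ag,i)$ coinciding with $p^*$.

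The only mildly nontrivial point is the verification that $p\circ p_B$ has the correct index-sum property at the blown-up edge, so that the universal cover is recognized and $\Phi$ exists; this is a single counting check using the definition of a cover applied to $p$ and $p_B$ in turn. Everything else is formal, so I do not anticipate any real obstacle.
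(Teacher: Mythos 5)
Your proposal is correct and follows the same route as the paper, which simply declares the lemma a direct consequence of Lemma~\ref{lemma:covering}; the deck-group construction behind that lemma is exactly the identification of $\widetilde{(\Bg,j)}$ with $\widetilde{(\Ag,i)}$ via the composite cover that you spell out. Your extra verification of injectivity (deck transformations of $p_B$ are, after conjugating by $\Phi$, deck transformations of $p_A$) is a welcome elaboration of a point the paper leaves implicit, and your counting check that the composite of covering maps is a covering map is the right one.
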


	\subsubsection{Sliding across an edge}
	Let $(\Ag,i)$ be an edge-indexed graph with two edges $e \neq f \in E\Ag$ such that $\alpha(e) = \alpha(f)$ and such that $i(e) = 1$. Then we define the edge-indexed graph obtained from $(\Ag,i)$ by \textbf{sliding the edge $f$ across $e$} by the following diagram:
	\[ \begin{xy}
	\xymatrix{ & \ar@{-}[dl]_>>{k}_f& & & & \ar@{-}[dr]^>>{kn}^f\\ 
	\bullet \ar@{-}[rr]_<<{1}_>>{n}_{\{e,\overline e\}} & &  \bullet & \Rightarrow & \bullet \ar@{-}[rr]_<<{1}_>>{n} & & \bullet 	}\end{xy}\]
	Remark that a sliding move is the composition of an expansion followed by a collapse.
	
	\subsubsection{Graph cover}
	
	Let $(\Ag,i)$ be a finite connected edge-indexed graph. We say that $(\Bg,j)$ is a \textbf{graph cover} of $(\Ag,i)$ if there is a map $p : (\Bg,j) \to (\Ag,i)$ which is both a covering map of edge-indexed graphs and a covering map of graphs $\Bg \to \Ag$. In other words, $p : \Bg \to \Ag$ is a surjective map such that
	\begin{itemize}
		\item for each vertex $v \in V\Bg$ the map $p$ induces a bijection $\alpha^{-1}(v) \to \alpha^{-1}(p(v))$
		\item for each  edge $e \in E\Bg$ one has $j(e) = i(p(e))$.
	\end{itemize}
	 
	\section{Connectedness and upper bounds} \label{sec:upper_bounds}
	
	This section is devoted to the proof of Theorem~\ref{theorem:upper_bounds}.
	
	\begin{prop} \label{prop:sequence_of_moves} Given any finite connected edge-indexed graph $(\Ag,i)$, then by doing a sequence of collapses, followed by a sequence of blow-ups, followed by a sequence of collapses, one can obtain an edge-indexed graph with a single vertex. 
	\end{prop}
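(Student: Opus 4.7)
The plan is to implement three phases, one per subsequence of moves.

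\textbf{Phase 1} (first collapses): Collapse any non-loop edge of index $1$, repeating as long as one exists. Each such move strictly decreases the vertex count, so the process halts at a connected edge-indexed graph $(\Ag', i')$ in which every non-loop edge $e$ satisfies $i'(e), i'(\overline e) \geq 2$ (otherwise it would have been collapsed).

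\textbf{Phase 2} (blow-ups): Pick a spanning tree $T \subset \Ag'$. For each edge $e \in ET$, perform a standard blow-up, which is legitimate thanks to Phase 1. This introduces, alongside each such $e$, a parallel edge $e''$ with $j(e'') = j(\overline{e''}) = 1$, while leaving the vertex set unchanged. Call the resulting edge-indexed graph $(\Ag'', j)$.

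\textbf{Phase 3} (second collapses): Collapse the new edges $\{e'' \mid e \in ET\}$ in an arbitrary order. Each such $e''$ has index $1$ on both sides, and collapsing one of them merges the endpoints of the corresponding $e \in ET$. Since $T$ is a spanning tree of $\Ag'$, contracting it merges all vertices into one, leaving an edge-indexed graph with a single vertex.

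The only step that requires justification, and the place where the argument is non-obvious, is that each $e''$ is still non-loop at the moment of collapse in Phase 3. Let $S \subset ET$ denote the subset of spanning-tree edges whose blow-up companions have already been collapsed, and suppose we are about to collapse $e''$ for some $e \in ET \setminus S$. Its endpoints $\alpha(e)$ and $\omega(e)$ have been identified in the current graph precisely when they lie in the same connected component of $(VT, S)$. But $T$ is a tree and $e \notin S$, so $S \cup \{e\}$ is acyclic, which forces $\alpha(e)$ and $\omega(e)$ to lie in distinct components of $(VT, S)$. Hence $e''$ is indeed non-loop, and the Phase 3 collapses are all valid. This tree-contraction remark is the sole technical subtlety of the proof.
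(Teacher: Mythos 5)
Your proof is correct and follows essentially the same route as the paper's: collapse until all non-loop edges have both indices at least $2$, blow up to create a trivially indexed spanning tree, then contract that tree. The only (immaterial) differences are that the paper blows up every non-loop edge rather than just the spanning-tree edges, and that you make explicit the tree-contraction observation (that uncollapsed tree edges remain non-loops) which the paper leaves implicit.
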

	\begin{proof} After a finite sequence of collapses, we can ensure that no non-loop edge is indexed by $1$. Next we do a standard blow up on all non-loop edges. Now there exists a maximal tree of the graph with trivial indexing. Collapsing any edge in this tree does not change the indexing of any other edge, so that it is possible to collapse all edges in this maximal tree one after the other. The result is thus an edge-indexed graph with a single vertex.
	\end{proof}
	 
	We are now in position to prove the following:
	\begin{theorem}\label{theorem:commation_to_regular} Given any $G \in \Tree$ there is some $n \geq 3$ such that there is a commation $G \nearrow \nwarrow \nearrow \Aut(T_n)$.
	\end{theorem}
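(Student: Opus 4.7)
The plan is to assemble the commation from the moves on edge-indexed graphs provided by Proposition~\ref{prop:sequence_of_moves}, using the dictionary between moves and copci homomorphisms given by Lemmas~\ref{lemma:collapse} and~\ref{lemma:blow-up}.

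First, I pick any geometric action of $G$ on a locally finite tree $T$ without inversions (possible up to subdividing each edge once) and let $(\Ag,i)$ be the quotient edge-indexed graph. Lemma~\ref{lemma:qfree_group_copci_elg} gives a copci homomorphism $G \to \pi_1(\Ag,i)$, which provides the initial $\nearrow$.

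Second, I apply Proposition~\ref{prop:sequence_of_moves} to $(\Ag,i)$, obtaining a sequence
\[
(\Ag,i) \;\xrightarrow{\text{collapses}}\; (\Ag',i') \;\xleftarrow{\text{blow-ups}}\; (\Ag'',i'') \;\xrightarrow{\text{collapses}}\; (\Cg,k),
\]
where $(\Cg,k)$ has a single vertex (I emphasise that the middle arrow points left only in the sense that $(\Ag'',i'')$ is obtained from $(\Ag',i')$ by blow-ups; the diagram above is meant as a record of the moves performed). By Lemma~\ref{lemma:collapse} each collapse yields a copci going in the direction of the move, and by Lemma~\ref{lemma:blow-up} each blow-up yields a copci from the blown-up graph back to the unblown one. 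Composing copci homomorphisms in the same direction (which preserves the copci property), the sequence of moves translates into a zig-zag of groups
\[
\pi_1(\Ag,i) \;\longrightarrow\; \pi_1(\Ag',i') \;\longleftarrow\; \pi_1(\Ag'',i'') \;\longrightarrow\; \pi_1(\Cg,k).
\]

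Third, I identify $\pi_1(\Cg,k)$. Since $\Cg$ has a single vertex $v_0$ with some loops $\{e_1,\overline{e_1}\},\dots,\{e_\ell,\overline{e_\ell}\}$, every vertex of the universal cover has degree $n := \sum_{j=1}^\ell \bigl(k(e_j) + k(\overline{e_j})\bigr)$, by the description of the cover recalled in Remark~\ref{remark:recognition}. Thus $\widetilde{(\Cg,k)} \cong T_n$ and $\pi_1(\Cg,k) \subset \Aut(T_n)$ is closed, acts properly (compact vertex stabilisers), and is cocompact since the quotient graph has a single vertex. Hence the inclusion $\pi_1(\Cg,k) \hookrightarrow \Aut(T_n)$ is copci. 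One checks $n \geq 3$: after the initial collapses we may assume $(\Ag',i')$ is minimal, so no loop has both indices equal to $1$, and a standard blow-up then separates the two sides so that any remaining loop at $v_0$ contributes at least $2$ to $n$, while if $\Cg$ has more than one loop one gets $n \geq 4$; either way $n \geq 3$ (indeed $n=2$ is ruled out because the universal cover would be a line, contradicting non-elementariness of $G$).

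Finally, composing the first two arrows $G \to \pi_1(\Ag,i) \to \pi_1(\Ag',i')$ into a single $\nearrow$ and composing the last arrow $\pi_1(\Ag'',i'') \to \pi_1(\Cg,k) \hookrightarrow \Aut(T_n)$ into a single $\nearrow$ yields precisely the desired commation $G \nearrow \nwarrow \nearrow \Aut(T_n)$. The only step requiring attention is verifying $n \geq 3$, i.e.\ that the resulting regular tree is bushy; everything else is a mechanical translation between the combinatorial moves and the corresponding copci homomorphisms.
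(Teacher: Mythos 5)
Your proposal is correct and follows essentially the same route as the paper: a copci into $\pi_1(\Ag,i)$ via Lemma~\ref{lemma:qfree_group_copci_elg}, then translating the collapse/blow-up/collapse sequence of Proposition~\ref{prop:sequence_of_moves} into a zig-zag of copcis using Lemmas~\ref{lemma:collapse} and~\ref{lemma:blow-up}, composing consecutive arrows of the same direction, and finally including $\pi_1$ of the single-vertex edge-indexed graph into $\Aut(T_n)$ with $n$ the sum of the loop indices. The only cosmetic deviation is that the paper observes the final maximal-tree collapses are isomorphisms (both indices $1$), whereas you simply treat them as copcis and then compose with the inclusion into $\Aut(T_n)$ — the resulting commation $G \nearrow\nwarrow\nearrow \Aut(T_n)$ is identical; your explicit check that $n\geq 3$ (via non-elementariness of $G$) is a small but welcome addition the paper leaves implicit.
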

	\begin{proof}
		By Lemma~\ref{lemma:qfree_group_copci_elg} there is a copci $G \to \pi_1(\Ag,i)$ for some finite connected edge-indexed graph $(\Ag,i)$. We fix a sequence of moves provided by Proposition~\ref{prop:sequence_of_moves} $(\Ag,i) \stackrel{\text{collapses}}{\longrightarrow} (\Bg,j) \stackrel{\text{edge blowup}}{\longrightarrow} (\Cg,k) \stackrel{\text{maximal forest collapse}}{\longrightarrow} (\Dg,l)$ where $\Dg$ has a single vertex. In view of Lemmas~\ref{lemma:covering} and~\ref{lemma:collapse} it follows that there is a commation \[G \nearrow \pi_1(\Ag,i) \nearrow \pi_1(\Bg,j) \nwarrow \pi_1(\Cg,k) = \pi_1(\Dg,l) \nearrow \Aut(T_n)\] with $n = \sum_{e\in E\Dg} l(e)$.
	\end{proof}
	
	Recall that the automorphism group of any regular tree is unimodular, and that for any two unimodular groups $U, U' \in \Tree$ there is a commation $U \nwarrow \nearrow U'$ by Bass-Kulkarni's Theorem, so that we obtain
	\begin{corollary}\label{corollary:commation_1_unimod} Given any group $G \in \Tree$ and any unimodular group $U \in \Tree$ there is a commation $G \nearrow \nwarrow \nearrow \Aut(T_n) \nwarrow \nearrow U$.\hfill {\qed}
	\end{corollary}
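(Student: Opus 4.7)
The plan is to concatenate the commation produced by Theorem~\ref{theorem:commation_to_regular} with the standard commation between unimodular groups in $\Tree$, so there is essentially no new work to do. Concretely, I would first apply Theorem~\ref{theorem:commation_to_regular} to the group $G$ to extract an integer $n\geq 3$ together with a commation $G \nearrow \nwarrow \nearrow \Aut(T_n)$. The only fact I then need is that the right endpoint $\Aut(T_n)$ is itself a unimodular member of $\Tree$, which was already recorded in the paper: the quotient edge-indexed graph for the $\Aut(T_n)$-action on $T_n$ is supported on a one-vertex graph (a single half-loop or loop), hence is automatically unimodular, so $\Aut(T_n)$ is a unimodular element of $\Tree$.

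With unimodularity of $\Aut(T_n)$ in hand, I would invoke Bass--Kulkarni (Theorem~\ref{theorem:BassKulkarni}) together with the immediate consequence spelled out just after it: any two unimodular groups in $\Tree$ admit a commation of the form $\nwarrow\nearrow$, via a common virtually-free cocompact lattice (two virtually free groups are commensurable up to finite kernels, so after passing to finite-index subgroups of lattices in $\Aut(T_n)$ and in $U$ one obtains the required bridging group). Applied with the pair $(\Aut(T_n), U)$, this yields a commation $\Aut(T_n) \nwarrow \Gamma \nearrow U$.

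Finally, I would simply paste the two commations together at $\Aut(T_n)$ to obtain $G \nearrow \nwarrow \nearrow \Aut(T_n) \nwarrow \nearrow U$, which is exactly the statement of the corollary. There is no substantive obstacle to overcome here; the only point worth flagging is the verification that the endpoint $\Aut(T_n)$ of Theorem~\ref{theorem:commation_to_regular} really is unimodular, so that Bass--Kulkarni is applicable — but this is ensured by the fact that a single-vertex edge-indexed graph carries only loops, whose contribution to $\Delta$ automatically cancels.
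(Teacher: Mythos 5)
Your overall structure is exactly the paper's: apply Theorem~\ref{theorem:commation_to_regular}, note that $\Aut(T_n)$ is unimodular, and glue on a Bass--Kulkarni bridge $\Aut(T_n) \nwarrow \Gamma \nearrow U$. However, the reason you offer for the unimodularity of $\Aut(T_n)$ is incorrect as stated. You assert that a single-vertex edge-indexed graph, carrying only loops, is ``automatically unimodular'' because the contribution of each loop to $\Delta$ cancels. This is false: for a loop $e$, the contribution of the path $e$ to the modular homomorphism is $i(e)/i(\overline e)$, and nothing forces $i(e)=i(\overline e)$ for a general grouping. Indeed the paper's own group $G_{2,4}$ is the fundamental group of a single-vertex edge-indexed graph with one loop indexed $2$ and $4$, and it is \emph{not} unimodular; the whole point of using $G_{2,4}$ as the center in Theorem~\ref{theorem:upper_bound_radius} is that, by Theorem~\ref{theorem:lower_bounds}\eqref{item:lower_bound_unimod}, the center cannot be chosen unimodular.

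The correct justification (and the one the paper uses, recorded at the end of the modular-homomorphism subsection) is: if the underlying graph $\Ag$ is a \emph{tree}, then $\pi_1(\Ag)$ is trivial and so $\Delta$ is automatically trivial, whence $(\Ag,i)$ is unimodular; since $\Aut(T_n)$ acts on $T_n$ with edge inversions, one passes to the barycentric subdivision, whose quotient edge-indexed graph is a single segment (a tree), hence unimodular. Alternatively, one could argue directly that because $\Aut(T_n)$ contains an automorphism swapping the two orientations of an edge, the corresponding edge stabilizer has the same index in the two adjacent vertex stabilizers, so the loop necessarily has $i(e)=i(\overline e)=n$. Either repair closes the gap; with it, the rest of your argument is sound and matches the paper's proof.
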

	
	\begin{remark}\label{remark:regular_degree} Suppose the edge-indexed graph $(\Bg,j)$ contains a trivially indexed maximal forest. Then collapsing this maximal forest results in an edge-indexed graph whose universal cover is the $n$-regular tree (and hence there is a copci $\pi_1(\Bg,j) \nearrow \Aut(T_n)$) where
	\[n = \sum_{e \in E\Bg} j(e) - |V\Bg| + 1.\]
	\end{remark}
	
	\begin{theorem}\label{theorem:upper_bound_diam} Given any $G, G' \in \Tree$ there is some $n \geq 3$ such that there is a commation $G \nearrow \nwarrow \nearrow \Aut(T_{n}) \nwarrow \nearrow \nwarrow G'$.
	\end{theorem}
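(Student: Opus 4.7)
The plan is to apply Theorem~\ref{theorem:commation_to_regular} twice, once to $G$ and once to $G'$, producing commations $G\nearrow\nwarrow\nearrow\Aut(T_{n_G})$ and $G'\nearrow\nwarrow\nearrow\Aut(T_{n_{G'}})$, and then to adjust the intermediate steps so that both commations terminate at the same regular tree $T_m$. If this can be done, the concatenation of the $G$-side and the reversal of the $G'$-side produces the desired six-arrow commation.

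Running Proposition~\ref{prop:sequence_of_moves} on an edge-indexed graph $(\Ag_G,i_G)$ obtained from Lemma~\ref{lemma:qfree_group_copci_elg} yields intermediate graphs $(\Bg_G,j_G)$ and $(\Cg_G,k_G)$, where $(\Cg_G,k_G)$ contains a trivially-indexed spanning tree, and a commation $G\nearrow\pi_1(\Bg_G,j_G)\nwarrow\pi_1(\Cg_G,k_G)\nearrow\Aut(T_{n_G})$ with $n_G$ given by Remark~\ref{remark:regular_degree}; analogously for $G'$. The key flexibility comes from graph covers: given an integer $d\geq 1$, a $d$-fold graph cover $p:(\Cg_G^{(d)},k_G^{(d)})\to(\Cg_G,k_G)$ gives, by Lemma~\ref{lemma:covering}, a copci injection $\pi_1(\Cg_G^{(d)},k_G^{(d)})\hookrightarrow\pi_1(\Cg_G,k_G)$, which composed with the inclusion into $\pi_1(\Bg_G,j_G)$ is still copci. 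Because a graph cover multiplies both $\sum k(e)$ and $|V\Cg_G|$ by $d$, applying the collapse-blow-up-collapse procedure to $(\Cg_G^{(d)},k_G^{(d)})$ produces a single-vertex graph whose universal cover is $T_{m(d)}$ with $m(d)=d(n_G-2)+2$, provided the cover still contains a trivially-indexed spanning tree.

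Setting $d=n_{G'}-2$ on the $G$-side and $d'=n_G-2$ on the $G'$-side, both sides target the common degree $m=(n_G-2)(n_{G'}-2)+2$; concatenating the two length-$3$ commations yields $G\nearrow\nwarrow\nearrow\Aut(T_m)\nwarrow\nearrow\nwarrow G'$, as required.

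The principal technical obstacle is ensuring that the graph cover $(\Cg_G^{(d)},k_G^{(d)})$ retains a trivially-indexed spanning tree, because the pre-image of the spanning tree of $(\Cg_G,k_G)$ is a disjoint union of $d$ copies and hence disconnected. One approach is to exploit the extra trivial parallel edges produced by the standard blow-ups in $(\Cg_G,k_G)$, choosing the homomorphism $\pi_1(\Cg_G)\to\mathbb{Z}/d$ defining the cover so that it restricts to a surjection on the fundamental group of the trivial subgraph; this forces the lifted trivial subgraph to be connected. When this is not immediately available (for instance if $(\Bg_G,j_G)$ is already a single vertex, so $G$ embeds copci directly into $\Aut(T_{n_G})$), one handles the case separately by producing the intermediate copci through a common cocompact free subgroup of $\Aut(T_{n_G})$ and $\Aut(T_m)$ via Theorem~\ref{theorem:BassKulkarni}, still yielding three arrows.
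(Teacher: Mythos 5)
Your overall strategy is the paper's: run the commation to a regular tree on both sides and symmetrize the target degree by inserting finite covers of the intermediate edge-indexed graphs, so that both sides land on the same $\Aut(T_m)$. However, two points in your execution do not work as written.

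First, the arithmetic. By Remark~\ref{remark:regular_degree} the degree attached to a graph with trivially indexed spanning tree is $n=\sum k(e)-|V|+1$; a $d$-fold graph cover multiplies $\sum k(e)$ and $|V|$ by $d$, so the new degree is $d(n-1)+1$, not $d(n-2)+2$. With your choices $d=n_{G'}-2$ and $d'=n_G-2$ the two sides give $(n_{G'}-2)(n_G-1)+1$ and $(n_G-2)(n_{G'}-1)+1$, which are unequal in general, so the commations do not meet. The repair is to take $d=n_{G'}-1$ and $d'=n_G-1$, yielding $(n_G-1)(n_{G'}-1)+1$ on both sides; this is exactly the paper's $mm'+1$ with $m=\sum j(e)-|V\Bg|$.

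Second, and more seriously, the order of operations. You cover the already blown-up graph $(\Cg_G,k_G)$ and then must restore a trivially indexed spanning tree; your fix (choose the cover so that $\pi_1$ of the trivially indexed subgraph surjects onto $\ZZ/d$) is unavailable precisely when that subgraph is simply connected, which happens whenever the non-loop edges of $\Bg_G$ form a forest --- not only in the single-vertex case you set aside. The paper avoids the obstacle entirely by reversing the order: first pass to a \emph{connected} $d$-sheeted cover $\tilde\Bg$ of $\Bg$ (possible since $\Bg$ may be assumed not to be a tree, the tree case being unimodular and handled by Corollary~\ref{corollary:commation_1_unimod} and Bass--Kulkarni), and only then perform the standard blow-ups on all non-loop edges of $\tilde\Bg$. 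Since every non-loop edge of $\tilde\Bg$ acquires a trivially indexed parallel companion and $\tilde\Bg$ minus its loops is still connected and spanning, the blown-up cover automatically contains a trivially indexed maximal tree. You should adopt this ordering (or prove your surjectivity condition can always be arranged, which it cannot); note also that your unimodular/degenerate fallback via Theorem~\ref{theorem:BassKulkarni} does produce the correct arrow pattern, so that part is fine.
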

	\begin{proof} We start as in the proof of Theorem~\ref{theorem:commation_to_regular} by finding copcis
	\[G \nearrow \pi_1(\Bg,j)  \text{ and } \pi_1(\Bg',j') \nwarrow G'  \]
	where all non-loop edges of $(\Bg,j)$ and $(\Bg',j')$ have index at least $2$. If either $\pi_1(\Bg,j)$ or $\pi_1(\Bg',j')$ is unimodular the result follows from Corollary~\ref{corollary:commation_1_unimod}, so we assume from now on that both $\pi_1(\Bg,j)$ and $\pi_1(\Bg',j')$ are non-unimodular. In particular, neither $\Bg$ nor $\Bg'$ are trees.
	
	We define \[m = \sum_{e \in E\Bg} j(e) - |V\Bg| \text{ and } m' = \sum_{e \in E\Bg'} j'(e) - |V\Bg'|.\] Since the graphs $\Bg$ and $\Bg'$ are not trees we can find connected covers $\tilde \Bg$ and $\tilde \Bg'$ of $\Bg$ and $\Bg'$ respectively, which are $m'$ and $m$-sheeted respectively. Let $\tilde j$ (resp. $\tilde j'$) be the indexing of $\tilde \Bg$ (resp. $\tilde \Bg'$) obtained by lifting the indexing $j$ of $\Bg$ (resp. $j'$ of $\Bg'$).
	Let $(\Cg,k)$ and $(\Cg',k')$ be obtained from $(\tilde \Bg,\tilde j)$ and $(\tilde \Bg',\tilde j')$ respectively by doing a standard blow up on all non-loop edges. We have
	\[ \sum_{e \in E\Cg} k(e) - |V\Cg| + 1 = \sum_{e \in E\tilde \Bg} \tilde j(e) - |V \tilde \Bg| + 1  = m' \left(\sum_{e \in E\Bg} j(e) - |V\Bg|\right) + 1 = m' m + 1\]
	and similarly 
	\[ \sum_{e \in E\Cg'} k'(e) - |V\Cg'| + 1 = \sum_{e \in E\tilde \Bg'} \tilde j'(e) - |V \tilde \Bg'| + 1  = m \left(\sum_{e \in E\Bg'} j'(e) - |V\Bg'|\right) + 1 = m m' + 1.\]
	In view of Remark~\ref{remark:regular_degree} collapsing trivially indexed maximal forests in $(\Cg,k)$ and $(\Cg',k')$ results in copcis $(\Cg,k) \nearrow \Aut(T_{mm' + 1}) \nwarrow (\Cg',k')$. Thus letting $n = mm'+1$ we have obtained the desired commation
	\[ G \nearrow \pi_1(\Bg,j)  \nwarrow \pi_1(\Cg,k) \nearrow \Aut(T_n) \nwarrow \pi_1(\Cg',k') \nearrow \pi_1(\Bg',j') \nwarrow G'.\qedhere \]
	\end{proof}
	
	The following upper bound is slightly more delicate to obtain. 
	\begin{theorem} \label{theorem:upper_bound_radius}
		Given any $G \in \Tree$ there is a commation $G \nearrow \nwarrow \nearrow \nwarrow G_{2,4}$.
	\end{theorem}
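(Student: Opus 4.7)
The plan is to strengthen Theorem~\ref{theorem:commation_to_regular} by reaching, in three arrows from $G$, not just some $\Aut(T_n)$ but rather a group $\pi_1(\Dg,l)$ for an edge-indexed graph $(\Dg,l)$ lying in the Forester deformation class $\mathcal{D}_{G_{2,4}}$ of the base edge-indexed graph of $G_{2,4}$. The key example of Section~\ref{section:key_example} supplies an infinite family of such graphs: the graphs $(A,i_k)$ and $(B,i_k)$ for $k\geq 0$ are all in this class, and by Lemma~\ref{lemma:qfree_group_copci_elg} applied to their procyclic groupings we obtain copci maps $G_{2,4}\to\pi_1(A,i_k)$ for every $k\geq 0$. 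Appending such a copci as a final $\nwarrow$-arrow produces the desired 4-arrow commation.

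Concretely, following the pattern of Theorem~\ref{theorem:upper_bound_diam}, I would proceed as follows. First, use Lemma~\ref{lemma:qfree_group_copci_elg} to obtain $G\to\pi_1(\Ag,i)$ and perform initial collapses to reach $(\Bg,j)$ with no trivially-indexed non-loop edge. Next, take a graph cover $(\tilde\Bg,\tilde j)\to(\Bg,j)$ of a carefully chosen degree $m'\geq 1$, perform blow-ups (possibly non-standard via Lemma~\ref{lemma:blow-up}) on the non-loop edges to produce $(\Cg,k)$, and collapse a trivially-indexed spanning subgraph to reach $(\Dg,l)$. Lemmas~\ref{lemma:covering},~\ref{lemma:collapse} and~\ref{lemma:blow-up} assemble these moves into
\[G \nearrow \pi_1(\Bg,j)\nwarrow \pi_1(\Cg,k)\nearrow \pi_1(\Dg,l) \nwarrow G_{2,4}.\]

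The main obstacle is ensuring that $(\Dg,l)$ lands in $\mathcal{D}_{G_{2,4}}$. Writing $\mu=\sum_e j(e)-2|V\Bg|$, the na\"ive Theorem~\ref{theorem:commation_to_regular}-style construction produces a single-vertex $(\Dg,l)$ with universal cover $T_n$ for $n=m'\mu+2$, so requiring $n=2^k+3$ yields the arithmetic condition $m'\mu=2^k+1$, which does not always admit a solution (for instance when $\mu=7$, whose multiplicative order of $2$ is odd). Overcoming this is where the key example becomes essential: instead of aiming for a single-vertex target and $\Aut(T_n)$, I would aim for $(\Dg,l)=(A,i_k)$ or $(B,i_k)$, whose own $\mu$-invariants $2^k+1$ and richer multi-vertex structure allow a matching for any $(\Bg,j)$. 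The delicate technical step is to verify that the graph cover $(\tilde\Bg,\tilde j)$, combined with suitable non-standard blow-ups, can indeed be collapsed onto such a target, and to confirm that the resulting $(\Dg,l)$ indeed lies in $\mathcal{D}_{G_{2,4}}$.
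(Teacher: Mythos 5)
Your skeleton is right, and you have correctly located the crux: after $G \nearrow \pi_1(\Bg,j) \nwarrow \pi_1(\Cg,k) \nearrow \pi_1(\Dg,l)$ one needs the final $\nwarrow$ from $G_{2,4}$, and the na\"ive count $i_1 m + 1$ (with $m=\sum_e j(e)-|V\Bg|$) need not be expressible as $2^{i_0}+3$ for any choice of covering degree $i_1$. But you do not resolve this obstruction; you only gesture at a resolution (``aim for $(A,i_k)$ or $(B,i_k)$ instead of a single-vertex graph'') and then explicitly defer ``the delicate technical step'' of verifying it. That deferred step is the entire content of the proof, and your proposed route is, if anything, harder than the original problem: hitting a specific two-vertex edge-indexed graph exactly by collapses from a cover of $(\Bg,j)$ is more rigid than hitting a one-vertex graph with prescribed total index, and you would additionally have to check that $G_{2,4}$ itself (not merely some grouping) acts geometrically on $\widetilde{(\Dg,l)}$.

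The paper keeps the single-vertex target $\Aut(T_{2^{i_0}+3})$ (to which $G_{2,4}$ maps copci, since $\widetilde{(A,i_{i_0})}$ is a subdivision of $T_{2^{i_0}+3}$) and kills the arithmetic obstruction with the \emph{sliding} move: writing $i_1 m + i_2 = 2^{i_0}+2$ with $i_1\geq m$ and $0\leq i_2\leq m$, one arranges the blow-ups of an $i_1$-sheeted cover so that there is a trivially indexed maximal tree, an edge $f_0$ outside it with indices $1$ and $2$, and $i_2$ further trivially indexed edges $f_1,\dots,f_{i_2}$ outside it; sliding each $f_i$ across $f_0$ multiplies its index by $2$, raising the total index sum by exactly $i_2$. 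Since a slide is an expansion followed by a collapse, it costs no extra arrows, and the corrected count $i_1 m + i_2 + 1 = 2^{i_0}+3$ gives the required degree. Without this additive correction mechanism (or an equivalent one), your argument does not close; as written it is a correct reduction of the theorem to an unproved claim.
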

	\begin{proof} Recall from Section~\ref{section:key_example} that $G_{2,4}$ is a locally compact group which acts geometrically on the regular tree of degree $2^k + 2 + 1$ for any integer $k \geq 0$. 
	
	Again we start by taking a copci
	\[G \nearrow \pi_1(\Bg,j) \]
	where all non-loop edges of $(\Bg,j)$ have index at least $2$. After subdividing loop edges and collapsing bigons in a subcover, we can make sure that $(\Bg,j)$ has no loop edge, so that all edges have index at least $2$.
	If $\pi_1(\Bg,j)$ is unimodular then the conclusion follows from Corollary~\ref{corollary:commation_1_unimod}, so we assume from now on that $\pi_1(\Bg,j)$ is non-unimodular. In particular, $\Bg$ is not a tree, has at least $2$ geometric edges, and not all non-separating edges have index $2$.
	
	We let $m = \sum_{e \in E\Bg} j(e) - |V\Bg|$. Let $i_0$ be an integer large enough so that $2^{i_0} + 2 \geq m^2$ and let $i_1 \geq m$ and $0 \leq i_2 \leq m$ be integers such that \[i_1 m + i_2 = 2^{i_0} + 2.\]
	Consider an $i_1$-sheeted cover $\tilde \Bg$ of $\Bg$ and let $\tilde j$ be the lift of the indexing $j$. Let $e_0$ be a non-separating edge of $\tilde \Bg$ such that $\tilde j(e_0) \geq 3$. Let also $e_1, \ldots, e_{i_2}$ be edges such that $\{e_i, \overline{e_i}\}\neq \{e_{i'}, \overline{e_{i'}}\}$ for any $0 \leq i \neq i' \leq i_2$. Note that such edges exist since $\Bg$ has at least $2$ geometric edges and $\tilde \Bg$ is an $i_1$-sheeted cover of $\Bg$ with $i_1 \geq m \geq i_2$. Let $(\Cg,k)$ be obtained from $(\tilde \Bg,\tilde j)$ by the following moves:
	\begin{enumerate}
		\item blow up the edge $\{e_0,\overline{e_0}\}$ to two geometric edges indexed $2 - 1$ and $(\tilde j(e_0)-2) - (\tilde j(\overline{e_0})-1)$ respectively.
		\item for each $1 \leq i \leq i_2$, blow up the edge $\{e_i,\overline{e_i}\}$ as follows:
		\begin{enumerate}
			\item if $\tilde j(e_i) = 2$ or $\tilde j(\overline{e_i}) = 2$ then blow up the edge to a $1 - 1$ edge and a $1 - n$ edge.
			\item otherwise, blow-up the edge to three edges, two of them with trivial indexing.
		\end{enumerate}
		\item blow up all other geometric edges $\{e, \overline e\}$ to two edges, one of which has trivial indexing.
	\end{enumerate}
	This ensures that the following holds for $(\Cg,k)$: 
	\begin{itemize}
		\item There exists a maximal subtree $T$ of $\Cg$ with trivial indexing. (This uses the fact that $e_0$ is non-separating)
		\item There is an edge $f_0$ outside $T$ such with $k(f_0) = 1$ and $k(\overline{f_0}) = 2$.
		\item There are pairwise distinct edges $f_i$ outside $T \cup \{f_0, \overline{f_0}\}$ for $i \in \{1, \ldots, i_2\}$ such that $k(f_i) = 1$.
	\end{itemize}
		
	Now for each $i \in \{1, \ldots, i_2\}$ slide the edge $f_i$ along the maximal tree up to $\alpha(f_0)$ and then across $f_0$, producing a graph $(\Dg,l)$. Thus the only change in the indices is that the resulting edges $f_i'$ have index $l(f_i') = 2$ whereas $k(f_i) = 1$. Thus we have 
	\begin{align*} \sum_{e \in E\Dg} l(e) - |V\Dg| + 1 & = \sum_{e \in E\Cg} k(e) + i_2 - |V\Cg| + 1 \\
	& = \sum_{e \in E\tilde \Bg} \tilde j(e) - |V \tilde \Bg| + i_2 + 1 \\
	& = i_1 \left(\sum_{e \in E\Bg} j(e) - |V\Bg|\right) + i_2 + 1 \\
	& = i_1 m + i_2 + 1 = 2^{i_0}+2+1.
	\end{align*}
	Moreover $(\Dg,l)$ still has a trivially indexed maximal tree so that by Remark~\ref{remark:regular_degree} we have a copci $\pi_1(\Dg,l) \nearrow \Aut(T_{2^{i_0}+2+1})$. Thus we have the desired commation
	\[ G \nearrow \pi_1(\Bg,j) \nwarrow \pi_1(\tilde \Bg,\tilde j) \nwarrow \pi_1(\Cg,k) \nearrow \pi_1(\Dg,l) \nearrow \Aut(T_{2^{i_0}+2+1}) \nwarrow G_{2,4}.\qedhere \]
	\end{proof}
	\section{Triangles of procyclic groups} \label{sec:key_examples}
		Fix pairwise distinct primes $q,r,s$. We fix the following edge-indexed graph $(A,i)$ with three vertices $v_1, v_2, v_3$ and $6$ edges $e_1, e_2, e_3, \overline{e_1}, \overline{e_2}, \overline{e_3}$
			\[(\Ag,i) =\begin{xy}
			 \xymatrix{ v_2 \ar@{-}|@{<}[rr]_{e_1}^<<<{2}^>>>{q} & & v_1 \ar@{-}|@{<}[dl]^<<<{2}^>>>{s}_{e_3} \\ 
			  & {v_3} \ar@{-}|@{<}[lu]^<<<{2}^>>>{r}_{e_2} &
			 }\end{xy}.\]
			 
		Let $n = 2 \times q \times r \times s$ and $\ZZ_n = \ZZ_2 \times \ZZ_q \times \ZZ_r \times \ZZ_s$. Let $\Af$ be the graph of groups with underlying graph $\Ag$ as above, with all edge and vertex groups isomorphic to $\ZZ_n$ and such that the edge monomorphism $\alpha_e : \Af_e \to \Af_{\alpha(e)}$ corresponds to multiplication by $i(e)$. We define $H^{2,2,2}_{q,r,s} = \pi_1(\Af)$.
		
		\begin{definition}\label{definition:S_2,N} Given $N\geq 1$ we define the sets of natural numbers \[\Pi_N = \left\{\prod_{n = 1}^N n^{k_n} \mid k_n \in \NN \text{ for each } n\right\}\] and \[\mathcal S_{2,N} = \Pi_N \cup (\Pi_N + \Pi_N).\]
		\end{definition}
		This section is devoted to the proof of the following result.
		\begin{theorem} \label{theorem:key_example}Let ${q,r,s} \geq 11$ be pairwise distinct primes.
		\begin{enumerate} 
			\item \label{theorem:key_example_minimal} Suppose there is a commation $H^{2,2,2}_{q,r,s} \nwarrow \nearrow \nwarrow G$ then there is a commation $H^{2,2,2}_{q,r,s} \nearrow \Aut(T^{2,2,2}_{q,r,s}) \nwarrow G$. 
			\item \label{theorem:key_example_rigidity} Suppose moreover that $q,r,s$ are not in $\mathcal S_{2,N}$. Then $G$ cannot act geometrically on a tree with degree bounded above by $N$.
		\end{enumerate}
	\end{theorem}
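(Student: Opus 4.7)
My plan is to prove part (1) first and then deduce (2). Throughout, let $H := H^{2,2,2}_{q,r,s}$ and $T := T^{2,2,2}_{q,r,s}$. For (1), given a commation $H \nwarrow K \nearrow L \nwarrow G$, Proposition~\ref{prop:commability_spaces} produces geometric actions of $K$ on both $T$ (via $K \to H$) and the Bass-Serre tree $T_L$ of $L$ (via $K \to L$), and of $G$ on $T_L$ (via $G \to L$). Applying Forester's Deformation Theorem~\ref{theorem:deformation} to the two $K$-trees $T$ and $T_L$, their quotient edge-indexed graphs $K \backslash T$ and $K \backslash T_L$ are related by a finite sequence of collapses and expansions; by Remark~\ref{remark:essential_deformation}, the essential forms are also related within essential edge-indexed graphs.

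The heart of part (1) is a rigidity argument showing that the $L$-action on $T_L$ descends to an $L$-action on $T$, so that restricting to $G$ yields the desired copci $G \to \Aut(T)$. I would first observe that $K \backslash T$ is itself essential: it is a cover of the triangle $(\Ag, i) = H \backslash T$ whose indices all lie in $\{2, q, r, s\}$ with $q, r, s \geq 11$ pairwise distinct primes, and the sum-of-indices rule at each vertex above $v_j$ partitions the outgoing edges into two groups with sums $2$ and the appropriate prime, which combined with the primality forbids any inessential configuration. I would then perform a case analysis on admissible essential-preserving collapses and expansions, exploiting both the modular homomorphism (whose image involves exactly the primes $\{2, q, r, s\}$) and the distinctness/primality of $q, r, s$ to show that no move can genuinely rearrange the triangular prime-index pattern. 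The conclusion is that the essential deformation class of (covers of) $(\Ag, i)$ is rigid enough to force the $L$-action to factor through $\Aut(T)$.

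For (2), assume $q, r, s \notin \mathcal S_{2, N}$ and suppose for contradiction that $G$ acts geometrically on a tree $T''$ with all vertex degrees at most $N$. By (1), $G$ acts geometrically on $T$, so Forester applies to the two $G$-trees $T$ and $T''$. Every edge index in $G \backslash T''$ is at most $N$ and hence lies in $\Pi_N$ (the $N$-smooth numbers); since $\Pi_N$ is closed under the multiplications induced by collapses, every edge index of $G \backslash T$ along the Forester sequence is also $N$-smooth. Since $G$ is cocompact in $\Aut(T)$ and $\Aut(T) \backslash T = (\Ag, i)$, the quotient $G \backslash T$ is a cover of the triangle $(\Ag, i)$: at each vertex above $v_1$, the $\overline{e_1}$-lifts have indices in $\Pi_N$ summing to $q$. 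Using the essential structure inherited from (1) to bound the number of these lifts to at most two, $q$ is either in $\Pi_N$ (one lift) or a sum of two elements of $\Pi_N$ (two lifts); either way $q \in \mathcal S_{2, N}$, contradicting the hypothesis. The same argument yields the contradiction for $r$ and $s$.

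The main obstacle will be the rigidity argument in (1). While the overall strategy (analyze essential forms via Forester and exploit the prime-index structure of $(\Ag, i)$) is clear, the detailed case analysis of which essential-preserving moves can actually occur is technical: nontrivial collapses do exist on covers of the triangle (at vertices where a prime-indexed edge splits into multiple index-$1$ lifts), and one must show that any such move corresponds to an equivariant subdivision rather than a real deformation into an inequivalent $K$-tree. The finer rigidity needed in (2) — that the number of $\overline{e_1}$-lifts at the relevant vertex is at most two — is a consequence of this same essential structure, which constrains covers of the triangle to remain essentially triangular.
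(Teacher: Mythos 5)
Your overall strategy (Forester plus the prime structure of the triangle) points in the right direction, but the two steps you defer as ``the main obstacle'' are precisely the content of the theorem, and as sketched they do not go through. For part (1), the paper does not run a case analysis on essential-preserving moves applied to an arbitrary cover of $(\Ag,i)$. It first proves a much stronger structural fact (Proposition~\ref{prop:cocpt_subgp_fi}): every closed cocompact subgroup of $H^{2,2,2}_{q,r,s}$ contains the kernel of $H^{2,2,2}_{q,r,s}\twoheadrightarrow\pi_1(\Ag)\cong\ZZ$, so its quotient edge-indexed graph is a genuine \emph{graph} cover of the triangle, i.e.\ a $3m$-cycle indexed $2,2,\ldots,2$ against $q,r,s,q,r,s,\ldots$. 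This is proved not by Forester but by a direct analysis of the lifting combinatorics (Lemma~\ref{lemma:combinatorics}) capped by a growth estimate $|V_{3l+3}|\geq\tfrac{11}{8}|V_{3l}|$ --- this is the only place the hypothesis $q,r,s\geq 11$ is used, and your sketch never uses it, which is a sign the argument is incomplete. Only after this reduction does Forester enter, applied to the $3m$-cycle, where the deformations are visibly just expansions of vertices into $1$--$1$ edges. The second missing idea is how to pass from a statement about the $K$-trees to a copci out of the \emph{full} group: you need the collapse $T_L\to T^{2,2,2}_{q,r,s}$ to be $\Aut(T_L)$-equivariant (or at least $G$-equivariant), not merely $K$-equivariant. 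The paper achieves this canonically (Proposition~\ref{prop:rigidity_cocpt_subgroup}) by coloring vertices of $T_L$ by their degrees $q+1,r+1,s+1,q+2,r+2,s+2,3$, which are pairwise distinct because $q,r,s$ are distinct odd primes; collapsing monochromatic subtrees gives an $\Aut(T_L)$-equivariant map onto $T^{2,2,2}_{q,r,s}$. You name this equivariance problem but offer no mechanism for it.

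For part (2), your reduction to $N$-smooth indices is correct and matches the paper (Corollary~\ref{corollary:invariant_primes}), but the claim that ``the essential structure'' bounds the number of lifts of the $q$-indexed edge at a vertex of $G\backslash T^{2,2,2}_{q,r,s}$ by two is false: a vertex above $v_1$ could a priori have, say, $q$ lifts of that edge each of index $1$, and such a vertex is not inessential (it has many outgoing edges), so essentiality rules nothing out. The paper argues in the opposite direction: since $q\notin\mathcal S_{2,N}$ and all indices lie in $\Pi_N$, there must be \emph{at least three} lifts of each prime-indexed edge at every vertex, while the index-$2$ edges admit at most two lifts; counting fibers around the triangle then yields $|\pi^{-1}(v_1)|\leq(2/3)^3|\pi^{-1}(v_1)|$, a contradiction. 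So the pointwise bound you want is unavailable, and a global counting argument is needed in its place.
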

		
		We postpone until the end of this section the (technical) proof of the first assertion, which is based on a precise description of all cocompact subgroups of $H^{2,2,2}_{q,r,s}$. Instead, we start by proving the second assertion assuming the first one.
		\begin{proof}[Proof of \eqref{theorem:key_example_rigidity} assuming \eqref{theorem:key_example_minimal}] Observe that it is enough to show that the set of invariant primes $P_G$ of $G$ is not contained in the interval $[1, \ldots, N]$. This indeed prohibits $G$ from acting geometrically on any tree with degree bounded above by $N$ in view of Corollary~\ref{corollary:invariant_primes}.
		
		Seeking a contradiction, assume that all invariant primes of $G$ are at most $N$. 
			 Recall that by definition $T^{2,2,2}_{q,r,s}$ is the universal covering tree $\widetilde{(\Ag,i)}$. Consider the action of $G$ on $T^{2,2,2}_{q,r,s}$ given by Theorem~\ref{theorem:key_example}\eqref{theorem:key_example_minimal} and let $(\Bg,j)$ be the quotient edge-indexed graph. Thus there is a covering of edge-indexed graphs $\pi : (\Bg,j) \to (\Ag,i)$. Since $q$ is not in $\mathcal S_{2,N}$ it follows that for any vertex $v \in \pi^{-1}(v_1)$ there are at least three lifts of $e_1$ starting at $v$ (and similary for $r$ and $s$) so that we have \[ |\pi^{-1}(e_i)| \geq 3 |\pi^{-1}(v_i)| \] for each $i \in \{1,2,3\}$. Moreover, since $i(\overline{e_i})=2$ for each $i \in \{1,2,3\}$ there can be at most $2$ edges above $\overline{e_i}$ starting from a given vertex of $\Bg$. This in turn shows that \[ |\pi^{-1}(\overline{e_i})| \leq 2 |\pi^{-1}(v_{i+1})|\] where indices are considered modulo $3$.
			 Combining the two relations and proceeding cyclically we obtain
			 \[ |\pi^{-1}(v_1)| \leq \frac{1}{3} |\pi^{-1}(e_1)| \leq \frac{2}{3} |\pi^{-1}(v_2)| \leq \frac{2}{3^2} |\pi^{-1}(e_2)| \leq \frac{2^2}{3^2} |\pi^{-1}(v_3)| \leq \frac{2^2}{3^3} |\pi^{-1}(e_3)| \leq \frac{2^3}{3^3} |\pi^{-1}(v_1)| \]
			 But $\pi^{-1}(v_1)$ is non-empty so we must have $(2/3)^3 \geq 1$ which is a contradiction.
		\end{proof}
		
		\subsection{Closed cocompact subgroups of $H^{2,2,2}_{q,r,s}$}
		
		\begin{notation} Throughout the rest of this section, we fix pairwise distinct primes $q,r,s \geq 11$, fix $n = 2\times q \times r \times s$ and rename $H^* = H^{2,2,2}_{q,r,s}$ for better readability.
		\end{notation}

		\begin{prop} \label{prop:cocpt_subgp_fi} Any closed cocompact subgroup $H \leq H^* = H^{2,2,2}_{q,r,s}$ contains the kernel of the map $H^* \twoheadrightarrow \pi_1(\Ag) \cong \ZZ$. Thus $H$ is normal of finite index, and $H \cong \pi_1(\ZZ_n(\tilde \Ag,\tilde i))$ for some finite (graph) cover $\tilde \Ag$ of $\Ag$ (and $\tilde i$ is the lift of the indexing $i$).
		\end{prop}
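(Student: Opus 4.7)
The plan is to analyze the $H$-action on the Bass-Serre tree $T = T^{2,2,2}_{q,r,s} = \widetilde{(\Ag, i)}$, which is geometric since $H$ is closed and cocompact in $H^*$. Let $\pi : (\tilde \Ag, \tilde i) \to (\Ag, i)$ be the induced covering of edge-indexed graphs, and for each $\tilde v \in V\tilde \Ag$ set $H_{\tilde v} = H \cap H^*_{\tilde v}$, a closed finite-index subgroup of $H^*_{\tilde v} \cong \ZZ_n = \ZZ_2 \times \ZZ_q \times \ZZ_r \times \ZZ_s$. The strategy is to prove that every $H_{\tilde v}$ equals the full stabilizer $\ZZ_n$. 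This forces $H$ to contain every elliptic element of $H^*$, hence the whole kernel $N$ of $H^* \twoheadrightarrow \pi_1(\Ag) \cong \ZZ$; then $H$ is normal in $H^*$ and $H/N$ is a cocompact (hence finite-index) subgroup of $\ZZ$, which simultaneously realizes $H$ as the fundamental group of a grouping of some graph cover of $\Ag$.

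I would parametrize $H_{\tilde v} = 2^{a_{\tilde v}} \ZZ_2 \times q^{b_{\tilde v}} \ZZ_q \times r^{c_{\tilde v}} \ZZ_r \times s^{d_{\tilde v}} \ZZ_s$ by the quadruple $(a_{\tilde v}, b_{\tilde v}, c_{\tilde v}, d_{\tilde v}) \in \NN^4$. For each edge $\tilde e$ above $e_1$, the edge stabilizer $H_{\tilde e}$ admits two descriptions: via $\alpha_{e_1}$ (multiplication by $i(e_1) = 2$, affecting only the $\ZZ_2$-component) as a subgroup of $H_{\alpha(\tilde e)}$, and via $\omega_{e_1}$ (multiplication by $i(\overline{e_1}) = q$, affecting only the $\ZZ_q$-component) as a subgroup of $H_{\omega(\tilde e)}$. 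Equating the two yields the compatibility relations
\[
a_{\omega(\tilde e)} = \max(a_{\alpha(\tilde e)} - 1, 0), \quad b_{\alpha(\tilde e)} = \max(b_{\omega(\tilde e)} - 1, 0), \quad c_{\alpha(\tilde e)} = c_{\omega(\tilde e)}, \quad d_{\alpha(\tilde e)} = d_{\omega(\tilde e)},
\]
with analogous relations across edges above $e_2$ (with $r$ replacing $q$) and $e_3$ (with $s$).

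The heart of the proof is a maximum-principle argument. For the $2$-exponent, suppose $M := \max_{\tilde v} a_{\tilde v} \geq 1$ and pick a maximizer $\tilde v$ with $\pi(\tilde v) = v_\ell$. Since $v_\ell = \omega(e_\ell)$ for a unique $\ell$ and $i(e_\ell) = 2$ guarantees at least one lift ending at $\tilde v$, there is an incoming edge $\tilde e$ above $e_\ell$ with $\omega(\tilde e) = \tilde v$; the compatibility $a_{\tilde v} = \max(a_{\alpha(\tilde e)} - 1, 0)$ forces $a_{\alpha(\tilde e)} = M + 1$, contradicting maximality. For the $q$-exponent, set $M := \max_{\tilde v} b_{\tilde v}$; since edges above $e_2$ and $e_3$ preserve $b$, the connectedness of $\tilde \Ag$ lets us transport any maximizer to some $\tilde v_2 \in \pi^{-1}(v_2)$, and then an outgoing edge above $e_1$ (which exists since $i(e_1) = 2$) lands at $\tilde w \in \pi^{-1}(v_1)$ with $b_{\tilde w} = M + 1$, again a contradiction. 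Symmetric cyclic arguments give $c \equiv d \equiv 0$. With all exponents vanishing, $H_{\tilde v} = \ZZ_n$ everywhere, so $H \supseteq N$; the indexing becomes $\tilde i(\tilde e) = i(\pi(\tilde e))$ with exactly one lift of each edge at each preimage vertex, making $\pi$ a graph cover and identifying $H$ with $\pi_1(\ZZ_n(\tilde \Ag, \tilde i))$. The main technical obstacle is the careful bookkeeping of compatibility relations and verifying the existence of lifts at each step of the transport argument.
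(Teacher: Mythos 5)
There is a genuine gap, and it sits exactly at the hardest point of the statement. You parametrize $H_{\tilde v}=H\cap H^*_{\tilde v}$ by a quadruple in $\NN^4$, i.e.\ you assert at the outset that $H_{\tilde v}$ is a \emph{finite-index} (equivalently, open) closed subgroup of $H^*_{\tilde v}\cong \ZZ_n$. This is not justified and is not a formal consequence of $H$ being closed and cocompact: a closed subgroup of $\ZZ_p$ may also be the trivial group (exponent $k_p=\infty$, as in Lemma~\ref{lemma:procyclic} of the paper), and closed cocompact subgroups of groups acting on trees can perfectly well meet compact open vertex stabilizers in infinite-index subgroups (think of a cocompact lattice in $\Aut(T_n)$, whose intersection with any vertex stabilizer is finite). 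Ruling out the infinite exponents is precisely the content of the bulk of the paper's proof: it shows that if some $k_p^v>0$ then some $k_p^v=\infty$ for $p\in\{q,r,s\}$ (via a monotonicity-around-cycles lemma), that this propagates to all vertices, and then derives a contradiction from a counting/growth estimate on the quotient graph ($11|V_{3l}|\le |E_{3l}|\le 2|V_{3l+1}|$, etc., forcing $|V_{3l}|\to\infty$). That last step is where the standing hypothesis $q,r,s\ge 11$ enters; your argument never uses it, which is a reliable sign that the essential difficulty has been assumed away. Note also that your maximum principle genuinely breaks in the infinite case, since $\max(\infty-1,0)=\infty$ yields no contradiction at a "maximizer."

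The part of your argument that treats finite exponents is correct and closely parallels the paper's Step 1 and Lemma~\ref{lemma:cycle}: the compatibility relations across lifted edges are right (the index-$2$ end decrements the $2$-exponent, the index-$p$ end decrements the $p$-exponent, the other coordinates are transported unchanged), existence of the needed incoming/outgoing lifts follows from the covering condition, and the max-principle correctly forces all finite exponents to vanish. The passage from "all local stabilizers are full" to "$H$ contains the kernel of $H^*\twoheadrightarrow\ZZ$, hence is normal of finite index and is the fundamental group of a procyclic grouping of a cyclic graph cover" is also fine. To repair the proof you must add the analysis of the case $k_p^v=\infty$, which cannot be avoided and requires the quantitative hypothesis on $q,r,s$.
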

		The proof requires some setup. In order to stay as self-contained as possible, we avoid using the (rather technical) language of coverings of graphs of groups, see Remark~\ref{rmk:coverings_gog}. Instead, we extract the needed observations which are then turned into combinatorics through the use of Lemma~\ref{lemma:combinatorics}. 
		
		We start by recalling elementary facts about procyclic groups
		\begin{lemma} \label{lemma:procyclic} Any closed subgroup $H_0 < \ZZ_n = \prod_{p \in \{2,q,r,s\}} \ZZ_p$ can be written in a unique fashion as \[ H_0 = \prod_{p \in \{2,q,r,s\}} p^{k_p} \ZZ_p \] where $k_p \in \NN \cup \{\infty\}$, with the convention $p^\infty = 0$.
		\end{lemma}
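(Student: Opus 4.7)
The strategy is to project $H_0$ onto each prime factor and show that $H_0$ decomposes as a direct product of its projections. The classification of closed subgroups in each single factor $\ZZ_p$ is a standard fact about procyclic groups: the closed subgroups form a chain $\ZZ_p \supset p\ZZ_p \supset p^2 \ZZ_p \supset \ldots \supset \{0\}$, indexed by $\NN \cup \{\infty\}$, because the quotients $\ZZ_p/p^k\ZZ_p \cong \ZZ/p^k\ZZ$ have no subgroups other than $p^j\ZZ/p^k\ZZ$ and $H_0$ is the inverse limit of its images in these quotients.

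For each $p\in\{2,q,r,s\}$ let $\pi_p : \ZZ_n \to \ZZ_p$ be the projection and put $H_p = \pi_p(H_0)$. Since $\ZZ_n$ is compact, $H_0$ is compact, so $H_p$ is a closed subgroup of $\ZZ_p$ and hence $H_p = p^{k_p}\ZZ_p$ for a unique $k_p \in \NN \cup \{\infty\}$. The inclusion $H_0 \subseteq \prod_p H_p$ is immediate from the definition of $H_p$.

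The heart of the argument is the reverse inclusion. Fix any prime $p_0 \in \{2,q,r,s\}$ and any $a_{p_0} \in H_{p_0}$. Pick some $b = (b_2,b_q,b_r,b_s) \in H_0$ with $\pi_{p_0}(b) = a_{p_0}$. By the Chinese remainder theorem, for each $k \geq 1$ there is an integer $m_k$ with $m_k \equiv 1 \pmod{p_0^k}$ and $m_k \equiv 0 \pmod{(p')^k}$ for every prime $p' \in \{2,q,r,s\} \setminus \{p_0\}$. The sequence $m_k \cdot b \in H_0$ converges componentwise: in the $p_0$-coordinate it converges to $a_{p_0}$ (because $m_k \to 1$ in $\ZZ_{p_0}$), while in every other coordinate $m_k b_{p'} \to 0$ in $\ZZ_{p'}$. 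Since $H_0$ is closed, the limit $e_{p_0}(a_{p_0})$ lies in $H_0$ and has $a_{p_0}$ in the $p_0$-coordinate and $0$ elsewhere. Given an arbitrary element $(a_2,a_q,a_r,a_s) \in \prod_p H_p$, we therefore have
\[ (a_2,a_q,a_r,a_s) = e_2(a_2) + e_q(a_q) + e_r(a_r) + e_s(a_s) \in H_0, \]
which proves $H_0 = \prod_p H_p = \prod_p p^{k_p}\ZZ_p$.

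Uniqueness of the exponents $k_p$ follows because $p^{k_p}\ZZ_p = \pi_p(H_0)$ is intrinsically determined by $H_0$, and the exponent is uniquely determined by the closed subgroup of $\ZZ_p$ as noted at the outset. The only subtle step is the CRT-plus-closedness construction of the ``orthogonal projectors'' $e_{p_0}$ inside $H_0$; once one notices that $\ZZ_n$ is a product of procyclic groups attached to distinct primes, this is essentially forced.
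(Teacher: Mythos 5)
Your proof is correct. The paper itself gives no proof of this lemma, treating it as a standard fact about procyclic groups, so there is no argument to compare against. Your route is the expected one: classify closed subgroups of a single factor $\ZZ_p$ via the chain $\ZZ_p \supset p\ZZ_p \supset \cdots \supset \{0\}$, project $H_0$ to each factor, and then recover $H_0$ as the product of its projections. The only point that really needs an argument is the reverse inclusion $\prod_p \pi_p(H_0) \subseteq H_0$, and your CRT construction of the coordinate idempotents inside $H_0$ (using closedness of $H_0$ under integer multiplication and under limits) handles it cleanly; it amounts to a hands-on proof of the fact that a closed subgroup of a product of pro-$p$ groups over distinct primes splits as the product of its intersections with the factors. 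The uniqueness part is correctly reduced to the uniqueness of $k_p$ for a closed subgroup of $\ZZ_p$.
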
 
		
		Let $T$ be the (locally finite) Bass-Serre tree of $\Af$ (= the universal covering tree of $(\Ag,i)$). Consider the action of $H$ on $T$. Since $H$ is cocompact in $H^*$ the action of $H$ on $T$ is cocompact, i.e. the quotient graph $\Bg = H \backslash T$ is finite. Let $\pi : \Bg \to \Ag$ denote the map between quotients induced by the inclusion $H < H^*$. For every vertex or edge $x \in VT \cup ET$, the stabilizer $H^*_x$ is isomorphic to $\ZZ_n$. For each $p \in \{2,q,r,s\}$ we let $k_p^x \in \NN \cup \{\infty\}$ be such that the the expression for the closed subgroup $H_x < H^*_x$ according to Lemma~\ref{lemma:procyclic} is \[H_x = \prod_{p \in \{2,q,r,s\}} p^{k_p^x} \ZZ_p < \ZZ_n \cong H^*_x.\] Moreover, the number $k_p^x$ is constant on each $H$-orbit, so that we can unambiguously define $k_p^x$ for $x \in V\Bg \cup E\Bg$. 
		
		\begin{remark} \label{rmk:coverings_gog} The situation translates into the language of coverings of graphs of group (in the sense of Bass \cite{Bass93}) as follows : we have a covering of graphs of groups $\pi:\Bf \to \Af$ 
		and for every vertex or edge $x \in V\Bg \cup E\Bg$, the tuple $(k_p^x)_{p\in \{2,q,r,s\}}$ is so that $\pi(\Bf_x) = \prod_{p \in \{2,q,r,s\}} p^{k_p^x} \ZZ_p < \ZZ_n \cong \Af_{\pi(x)}$. 
		\end{remark}		
		
		The following lemma provides necessary conditions the graph $V\Bg$ and the numbers $k_p^x$ must satisfy if they come from the above construction.
		\begin{lemma} \label{lemma:combinatorics} Let $\pi : \Bg \to \Ag$ and $k_p^x$ be as above.
		
		Fix a vertex $v \in V\Bg$ and an edge $e \in EA$ originating at $\pi(v)$ with index $p = i(e) \in \{2,q,r,s\}$. Then
		\begin{enumerate} 
			\item If $k_{p}^v = 0$ then there is one lift $\tilde e$ of $e$ starting at $v$. We have $k_p^{\tilde e} = 0$ and $k_{p'}^{\tilde e} = k_{p'}^v$ for any prime $p' \in \{2,q,r,s\}$ with $p \neq p'$. \label{item:1lift}
			\item If $k_{p}^v > 0$ then there are $p$ lifts of $e$ starting at $v$. For any such lift $\tilde e$ we have $k_p^{\tilde e} = k_p^v - 1$ and $k_{p'}^{\tilde e} = k_{p'}^v$ for any prime $p' \in \{2,q,r,s\}$ with $p \neq p'$. (with the convention that $\infty - 1 = \infty$) \label{item:plifts}
		\end{enumerate}
		\end{lemma}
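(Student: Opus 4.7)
The plan is to fix a lift $\hat v \in VT$ of $v$ under the quotient map $T\to\Bg$ and a lift $\hat e_0 \in ET$ of $e$ starting at $\hat v$, and then count $H_{\hat v}$-orbits of edges of $T$ at $\hat v$ projecting to $e$ (these correspond bijectively to the lifts $\tilde e$ of $e$ at $v$ in $\Bg$) together with their $H$-stabilizers. All the real work reduces to a coordinatewise computation in $\ZZ_n$ via the description in Lemma~\ref{lemma:procyclic}.

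For the count of lifts, I would apply Bass--Serre theory to the $H^*$-action on $T$: the edges of $T$ at $\hat v$ projecting to $e$ are in bijection with $H^*_{\hat v}/H^*_{\hat e_0}$, a set of cardinality $i(e)=p$. The lifts $\tilde e$ in $\Bg$ therefore correspond to $H_{\hat v}$-orbits on this coset space, and since $H^*_{\hat v} \cong \ZZ_n$ is abelian, such orbits are cosets of the product subgroup $H_{\hat v}\cdot H^*_{\hat e_0}$ in $H^*_{\hat v}$. Under the identification $H^*_{\hat v} \cong \Af_{\pi(v)} = \ZZ_n$ coming from Bass--Serre theory, the inclusion $H^*_{\hat e_0} \hookrightarrow H^*_{\hat v}$ is the edge monomorphism $\alpha_e$, which is multiplication by $p$ on the $\ZZ_p$-factor and the identity on the other factors; hence $H^*_{\hat e_0} = \prod_{p'\neq p}\ZZ_{p'}\times p\ZZ_p$ while $H_{\hat v} = \prod_{p'} p'^{k_{p'}^v}\ZZ_{p'}$. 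A coordinatewise product gives $H_{\hat v}\cdot H^*_{\hat e_0} = \prod_{p'\neq p}\ZZ_{p'} \times p^{\min(k_p^v,1)}\ZZ_p$, whose index in $H^*_{\hat v}$ is $p^{\min(k_p^v,1)}$: exactly one lift when $k_p^v=0$ and exactly $p$ lifts when $k_p^v > 0$.

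For the exponents $k_p^{\tilde e}$, note that abelianness of $H^*_{\hat v}$ forces all $p$ edges at $\hat v$ above $e$ to share the same $H^*$-stabilizer, namely $H^*_{\hat e_0}$; consequently for any such lift $\hat e$ we have $H_{\hat e} = H_{\hat v}\cap H^*_{\hat e_0}$, computed coordinatewise in $\ZZ_n$ as $\prod_{p'\neq p} p'^{k_{p'}^v}\ZZ_{p'} \times p^{\max(k_p^v,1)}\ZZ_p$. This expresses $H_{\hat e}$ as a subgroup of $H^*_{\hat v}$; to read off the exponents $k_p^{\tilde e}$ and $k_{p'}^{\tilde e}$ as defined in the setup, one must pull the above subgroup back through $\alpha_e$ to view it inside $H^*_{\hat e}\cong\Af_e=\ZZ_n$, which subtracts $1$ from the exponent on the $\ZZ_p$-factor. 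The outcome is $k_p^{\tilde e} = \max(k_p^v,1)-1$ and $k_{p'}^{\tilde e} = k_{p'}^v$ for $p'\neq p$, matching case~\eqref{item:1lift} when $k_p^v = 0$ and case~\eqref{item:plifts} when $k_p^v > 0$. I expect the main (really the only) subtlety in the argument to be this final pull-back through $\alpha_e$, which is precisely what produces the $-1$ appearing in case~\eqref{item:plifts}; everything else is routine coordinatewise arithmetic in $\ZZ_n$.
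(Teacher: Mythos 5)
Your proof is correct and follows essentially the same route as the paper: lift to the Bass--Serre tree, identify all relevant stabilizers coordinatewise in $\ZZ_n$ via Lemma~\ref{lemma:procyclic}, and compute indices and intersections factor by factor. The only cosmetic differences are that you count lifts uniformly as orbits on the coset space $H^*_{\hat v}/H^*_{\hat e_0}$ (giving the index of $H_{\hat v}\cdot H^*_{\hat e_0}$) rather than splitting into cases first, and you make explicit the pull-back through $\alpha_e$ that the paper performs implicitly in its two ``so that'' steps.
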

		\begin{proof} Let $w$ be a lift of $v$ in $T$ and let $f$ be a lift of $e$ in $T$ with $\alpha(f) = w$. In particular for any $p_0 \in \{2,q,r,s\}$ we have $k_{p_0}^v = k_{p_0}^w$ and letting $\tilde e$ denote the projection of $f$ in $\Bg$ we have $k_{p_0}^{\tilde e} = k_{p_0}^f$. Recall from the definition of $H^*$ that we can identify 
		\[H^*_w \cong \ZZ_n =\ZZ_p \times \prod_{p \neq p' \in \{2,q,r,s\}} \ZZ_{p'}.\] 
		Under this identification we have 
		\[H^*_f = p \ZZ_n = p\ZZ_p \times \prod_{p \neq p' \in \{2,q,r,s\}} \ZZ_{p'}\] 
		and 
		\[H_w = p^{k_p^w} \ZZ_p \times  \prod_{p \neq p' \in \{2,q,r,s\}} {p'}^{k_{p'}^w} \ZZ_{p'}.\]
		In particular we have $[H^*_w:H^*_f]=p$ and there are $p$ edges in the $H^*_w$-orbit of $f$. Moreover we have 
		\[H_f = H_w \cap H^*_f = p^{\max\{1,k_p^w\}} \ZZ_p \times  \prod_{p \neq p' \in \{2,q,r,s\}} {p'}^{k_{p'}^w} \ZZ_{p'}.\]
		
		The two cases in Lemma~\ref{lemma:combinatorics} shall appear by splitting into two cases, according to whether $H_w \subset H^*_f$ or not. 
		
		\eqref{item:1lift} Suppose that $H_w \not \subset H^*_f$. 
		It follows that $k_p^w = 0$ and $H_f = H_w \cap H^*_f$ has index $p$ in $H_w$. Thus there are $p$ edges in the $H_w$-orbit of $f$ so that $H_w f = H^*_w f$ and there is exactly one lift $\tilde e$ of $e$ starting at $v$ in $\Bg$. Finally, comparing the expressions for $H^*_f$ and $H_f$, we have $k_p^f = 0$ and $k_{p'}^f = k_{p'}^w$ for each $p' \in \{2,q,r,s\}$ with $p' \neq p$.
		
		\eqref{item:plifts} Suppose that $H_w \subset H^*_f = p\ZZ_n$. This means that $k_p^w >0$. We also have $H_f = H_w \cap H^*_f = H_w$ so that $H_w$ fixes the $p$ edges of the form $H^*_w f$ and hence there are exactly $p$ lifts of $e$ starting at $v$ in $\Bg$. Furthermore, we have a chain $H_f = H_w \subset H^*_f = p \ZZ_n \subset \ZZ_n = H^*_w$ so that $k_p^f = k_p^w - 1$ and for each prime $p' \neq p$ we have $k_{p'}^f = k_{p'}^w$. 
		\end{proof}
		
		Since the graph $\Ag$ is a cycle, it has two natural orientations (clockwise and counterclockwise). We say a path $\gamma$ in $\Bg$ runs \textbf{clockwise} or \textbf{counterclockwise} if its projection $\pi(\gamma)$ does. We record the following consequence of Lemma~\ref{lemma:combinatorics}:
		\begin{lemma}\label{lemma:cycle} Given any clockwise (or counterclockwise) cycle $\gamma$ in $\Bg$ and any prime $p\in \{2,q,r,s\}$ such that all $k_p^v$ are finite, then $k_p^v=0$ for all $v \in V\gamma$.
		\end{lemma}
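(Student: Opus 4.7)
The plan is to trace the invariant $k_p^v$ along the cycle $\gamma$, applying Lemma~\ref{lemma:combinatorics} at each edge crossing. The crucial structural fact about $(\Ag,i)$ is that for every geometric edge of the triangle one has $\{i(e),i(\overline e)\} = \{2,p_i\}$ with $p_i \in \{q,r,s\}$, so the two indices are always distinct; hence for a given prime $p$ at most one of the two non-trivial cases of Lemma~\ref{lemma:combinatorics} applies at each edge. Applying the lemma at both endpoints of a clockwise edge $\tilde e_j$ from $v_{j-1}$ to $v_j$ projecting to $e_i$ yields the following explicit recursion: if $p = p_i$ then $k_p^{v_j} = \max(k_p^{v_{j-1}}-1,0)$; if $p = 2$ then either $k_p^{v_{j-1}} = k_p^{v_j} = 0$ or else $k_p^{v_j} = k_p^{v_{j-1}} + 1$; and in all remaining cases $k_p^{v_j} = k_p^{v_{j-1}}$.

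I then split into two cases. Suppose first $p \in \{q,r,s\}$, say $p = p_{i_0}$. Along $\gamma$ the value of $k_p$ is unchanged except when crossing a lift of $e_{i_0}$, in which case it drops by $1$ or remains $0$. Since $\gamma$ is a clockwise cycle its projection wraps around the triangle a positive number of times and in particular crosses at least one lift of $e_{i_0}$; iterating the rule, closing up the cycle forces $k_p^{v_0} = \max(k_p^{v_0}-N',0)$ for some integer $N' \geq 1$. Combined with the finiteness of $k_p^{v_0}$, this forces $k_p^{v_0} = 0$, and the recursion then propagates this value to give $k_p^{v_j} = 0$ at every vertex of $\gamma$.

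The case $p = 2$ is dual: every clockwise edge of $\gamma$ has reverse index $2$, so the recursion above says that once $k_2$ becomes positive it strictly increases by $1$ at each subsequent step. Starting from any finite $k_2^{v_0} \geq 1$ one would obtain $k_2^{v_n} = k_2^{v_0} + n \neq k_2^{v_0}$, contradicting the cycle condition $k_2^{v_n} = k_2^{v_0}$; hence $k_2^{v_0} = 0$, and the same recursion forces $k_2^{v_j} = 0$ for all $j$. The main subtlety in carrying out this plan is translating Lemma~\ref{lemma:combinatorics}---which compares a vertex invariant to an outgoing-edge invariant---into the symmetric vertex-to-vertex relations used above, being careful about which endpoint of $\tilde e_j$ the lemma is applied at and about the asymmetry between the forward and reverse indices of each edge.
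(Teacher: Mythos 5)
Your proof is correct and follows essentially the same route as the paper: both trace the invariant $k_p$ around the cycle via Lemma~\ref{lemma:combinatorics} and use the closing-up condition to force the value to zero. The paper phrases this as a chain of (in)equalities $k_p^{v_0} \geq k_p^{f_1} = k_p^{v_1} \geq \cdots = k_p^{v_0}$ (and the dual chain for $p=2$), whereas you make the vertex-to-vertex step explicit with the $\max(\cdot-1,0)$ recursion; the underlying argument is the same.
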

		\begin{proof}
		Suppose without loss of generality that $\gamma$ is oriented clockwise. Write $\gamma = f_1, \ldots, f_m$ and for each $1 \leq l \leq m$ write $v_l = \alpha(f_{l+1}) = \omega(f_l)$ (where indices are considered modulo $m$). In view of Lemma~\ref{lemma:combinatorics} we have the following inequalities
		\[ k_2^{v_0} \leq k_2^{f_1} = k_2^{v_1} \leq \ldots \leq k_2^{f_m} = k_2^{v_m} = k_2^{v_0}\]
		and either all are infinite, or they are all $0$.
		For $p \in \{q,r,s\}$ we have similarly
		\[ k_p^{v_0} = k_p^{f_1} \geq k_p^{v_1} = \ldots = k_p^{f_m} \geq k_p^{v_m} = k_p^{v_0}\]
		and either all are infinite, or they are all $0$.
		\end{proof}		
		
		\begin{proof}[Proof of Proposition~\ref{prop:cocpt_subgp_fi}] Our goal is to show that $(B,j)$ is a (finite) graph cover of $(A,i)$. In view of Lemma~\ref{lemma:combinatorics} it is equivalent to show that $k_p^v = 0$ for each $v \in VB$ and each $p \in \{2,q,r,s\}$. Seeking a contradiction, we assume from now on that there exists some $v \in V\Bg$ and some $p \in \{2,q,r,s\}$ such that $k_p^v > 0$.
		
		\begin{step} There is some vertex $v$ and some $p \in \{q,r,s\}$ with $k_p^v > 0$.
		\end{step}
		Suppose on the contrary that $k_p^v = 0$ for each vertex $v \in V\Bg$ and each $p \in \{q,r,s\}$. Thus there is some vertex $v \in V\Bg$ with $k_2^v > 0$. Rephrasing using Lemma~\ref{lemma:combinatorics}, from each vertex there is a single edge in the counterclockwise direction, and there is a vertex with two edges in the clockwise direction. But this means that \[|\pi^{-1}(v_1)| = |\pi^{-1}(e_1)| \geq |\pi^{-1}(v_2)| = |\pi^{-1}(e_2)| \geq |\pi^{-1}(v_3)| = |\pi^{-1}(e_3) |\geq |\pi^{-1}(v_1)|\] with at least one of the inequalities being strict, which is impossible.
		
		\begin{step} There is some vertex $v$ and some $p \in \{q,r,s\}$ with $k_p^v = \infty$.
		\end{step}
		Let $v$ and $p \in \{q,r,s\}$ be such that $k_p^v > 0$ (which exist by the previous step). Start a clockwise path from $v$ until it eventually runs into itself at a vertex $v'$ (which it must since the graph is finite). Two situations may arise. Either $v = v'$ in which case $k_p^{v'} > 0$ by assumption. Otherwise observe that $v'$ is the starting point of two counterclockwise edges, from which it follows that $k_{p'}^{v'} > 0$ for some $p' \in \{q,r,s\}$. In either case $v'$ is on a clockwise cycle and has $k_{p''}^{v'} > 0$ for some $p'' \in \{q,r,s\}$. Thus $k_{p''}^{v'} = \infty$ by Lemma~\ref{lemma:cycle}.
		
		\begin{step} There is some $p \in \{q,r,s\}$ such that $k_p^v = \infty$ for all $v \in V\Bg$. 
		\end{step}
		It follows from from Lemma~\ref{lemma:combinatorics} that for any edge $e$ one has \[k_p^e = \infty \Leftrightarrow k_p^{\alpha(e)} = \infty.\] Thus since the graph $\Bg$ is connected, if there is a vertex $v$ and a prime $p \in \{2,q,r,s\}$ such that $k_p^v = \infty$, then $k_{p}^{v'} = \infty$ for all $v' \in V\Bg$. This step now follows from the previous one. 		
				
		\begin{step} Final contradiction
		\end{step}
		The conclusion of the previous step implies that there is some $m \in \{1,2,3\}$ such that from each vertex in $\pi^{-1}(v_m)$ there are exactly $p$ outgoing counterclockwise edges. Let $v \in \pi^{-1}(v_m)$.
	For each $l \in \NN$ let $V_l$ denote the set of vertices $w$ such that there is a counterclockwise path of lenth (exactly) $l$ from $v$ to $w$, and let $E_l$ be the set of edges from $V_l$ to $V_{l+1}$. Thus $V_0 = \{v\}$ and for each $l \in \NN$ we have $V_{3l} \subset \pi^{-1}(v_m)$. Thus it follows from the previous step that for each $l \in \NN$ there is some $p \in \{q,r,s\}$ such that $|E_{3l}| = p|V_{3l}|$. Since all $q,r,s$ are at least $11$, it follows that 
	\begin{equation}\label{eqn:X+}  11|V_{3l}| \leq |E_{3l}|\text{ for all } l \in \NN.
	\end{equation}
	Moreover, from every vertex starts at least one counterclockwise edge, hence
	\begin{equation}\label{eqn:YZ}  |V_l| \leq |E_l| \text{ for all } l \in \NN.
	\end{equation}
	Now recall that from any vertex originate two edges in the clockwise direction, so that 
	\begin{equation}\label{eqn:edges} |E_l| \leq 2|V_{l+1}| \text{ for all } l \in \NN.
	\end{equation}
	Combining \eqref{eqn:X+},~\eqref{eqn:YZ} and~\eqref{eqn:edges} we have for each $l \in \NN$
	\[ 11 |V_{3l}| \leq |E_{3l}| \leq 2 |V_{3l+1}| \leq 2|E_{3l+1}| \leq 4 |V_{3l+2}| \leq 4 |E_{3l+2}| \leq 8 |V_{3l+3}|.\]
	Thus we have $|V_0| = 1$ and $|V_{3l+3}| \geq \frac{11}{8} |V_{3l}|$ so that $\lim_{l \to \infty} |V_{3l}| = \infty$ in contradiction with the finiteness of $\Bg$. 
		\end{proof}
		\begin{prop} \label{prop:rigidity_cocpt_subgroup} Let $H < H^{2,2,2}_{q,r,s}$ be a closed cocompact subgroup. Let $T$ be a geometric $H$-tree. Then there is a copci $\Aut(T) \to \Aut(T^{2,2,2}_{q,r,s})$.
		\end{prop}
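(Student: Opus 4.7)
The plan is to exhibit $T^* = T^{2,2,2}_{q,r,s}$ as an $\Aut(T)$-equivariant collapse of $T$, so that the desired copci arises as the induced action of $\Aut(T)$ on $T^*$. As a first step, Proposition~\ref{prop:cocpt_subgp_fi} identifies $H$ with $\pi_1(\ZZ_n(\tilde \Ag, \tilde i))$ for some finite graph cover $\tilde \Ag \to \Ag$, and in particular $H$ acts geometrically on $T^*$ (which is also the universal cover of $(\tilde \Ag, \tilde i)$) with quotient edge-indexed graph $(\tilde \Ag, \tilde i)$. Letting $(\Bg, j)$ be the quotient edge-indexed graph of $T$ by $H$, every index of $(\tilde \Ag, \tilde i)$ lies in $\{2, q, r, s\}$, so this graph is essential; Theorem~\ref{theorem:deformation} together with Remark~\ref{remark:essential_deformation} then give $(\Bg, j)_{\mathrm{ess}} = (\tilde \Ag, \tilde i)$, and $(\Bg, j)$ reduces to $(\tilde \Ag, \tilde i)$ through a finite sequence of trivially-indexed edge collapses.

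Lifting this sequence to $T$ produces an $H$-equivariant surjection $\phi : T \to T^*$ whose fibers are finite subtrees (each step contracts a disjoint $H$-invariant family of such subtrees). The heart of the proof will be promoting $\phi$ to an $\Aut(T)$-equivariant map, which amounts to showing that the fiber partition of $VT$ is $\Aut(T)$-invariant. The plan is to characterize the fibers intrinsically, using only the combinatorics of $T$ itself. The essential skeleton $(\tilde \Ag, \tilde i)$ is a cycle with all indices in $\{2, q, r, s\}$, so every vertex of $T^*$ carries one of three valences $2+q, 2+r, 2+s$, each at least $13$ and all distinct (for pairwise distinct primes $q, r, s \geq 11$), with a rigidly prescribed multiset of neighbor valences dictated by the cycle. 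The backbone vertices of $T$ (those projecting to $\tilde \Ag \subset \Bg$) inherit these valences and adjacency patterns, while vertices internal to a fiber of $\phi$ display different local patterns; an inductive refinement over nested neighborhoods should allow $\Aut(T)$ to recognize the backbone (and hence the fibers).

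Granting $\Aut(T)$-equivariance, $\phi$ induces a continuous homomorphism $\Phi : \Aut(T) \to \Aut(T^*)$. The kernel consists of automorphisms of $T$ fixing $T^*$ pointwise, which permute each finite fiber and thus embed as a closed subgroup of the compact product $\prod_{v \in VT^*} \mathrm{Sym}(\phi^{-1}(v))$, so $\Phi$ is proper. The image contains $H$, which is cocompact in $\Aut(T^*)$ through the cocompact inclusions $H < H^{2,2,2}_{q,r,s} < \Aut(T^*)$, hence $\Phi(\Aut(T))$ is cocompact; closedness then follows from standard properness-and-cocompactness arguments for actions on locally finite trees.

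The main obstacle will be the $\Aut(T)$-equivariance. Expansions of $(\tilde \Ag, \tilde i)$ can in principle create non-backbone vertices whose valence coincides with one of $\{2+q, 2+r, 2+s\}$, in which case valence alone does not separate them from backbone vertices. The hypothesis $q, r, s \geq 11$ together with the cyclic rigidity of $(\Ag, i)$ should allow such ambiguities to be resolved by analyzing deeper local patterns, but making this precise is the principal technical point.
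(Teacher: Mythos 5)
Your overall strategy is the same as the paper's: constrain the quotient $(\Bg,j)$ of $T$ by $H$ using Forester's theorem, exhibit $T^{2,2,2}_{q,r,s}$ as obtained from $T$ by collapsing finite fibers, show the fiber partition is $\Aut(T)$-invariant by a purely combinatorial characterization, and deduce the copci. However, the central step --- the intrinsic recognition of the fibers by $\Aut(T)$ --- is exactly where the content of the proposition lies, and you leave it as ``an inductive refinement over nested neighborhoods should allow $\Aut(T)$ to recognize the backbone,'' explicitly flagging it as unresolved. That is a genuine gap: without it nothing is proved. The paper closes it with a short explicit computation. Since all indices of $(\tilde\Ag,\tilde i)$ are the primes $2,q,r,s$, the only moves available between essential graphs are expansions of a vertex into a $1$--$1$ edge (Figure~\ref{fig:blowup_vertex}), so every vertex of $T$ has degree $3$, $p_l+1$ or $p_l+2$ with $p_l\in\{q,r,s\}$; these seven values are pairwise distinct because $q,r,s\geq 11$ are distinct odd primes. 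Hence the ambiguity you worry about (an expansion vertex whose valence coincides with $2+q$, $2+r$ or $2+s$) simply does not occur; the only vertices not determined by their degree are those of degree $3$, and these are handled by looking at which neighbor has degree $p_l+1$. This yields the paper's three-coloring of $VT$, whose monochromatic components (of size $1$ or $2$) are the fibers, manifestly $\Aut(T)$-invariant; Remark~\ref{remark:recognition} then identifies the collapsed tree with $T^{2,2,2}_{q,r,s}$.

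Two smaller points. First, your assertion that $(\Bg,j)_{\mathrm{ess}}=(\tilde\Ag,\tilde i)$ does not follow from Remark~\ref{remark:essential_deformation}: the $1$--$1$ vertex expansions above produce edge-indexed graphs that are still essential (the new vertex has outgoing indices $1$ and $p_l$, the old one $1$ and $2$, so neither is inessential), so the essential deformation space is not a single point. What is true, and what you need, is that every essential graph reachable from $(\tilde\Ag,\tilde i)$ is such a vertex expansion --- this uses primality of the indices and requires an argument. Second, your closing paragraph on properness and cocompactness of the induced map $\Aut(T)\to\Aut(T^{2,2,2}_{q,r,s})$ is fine and matches what the paper leaves implicit.
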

		\begin{proof} It will be convenient to rename $p_0 = q, p_1 = r, p_2 = s$ and consider indices modulo 3. Since $H$ is a cocompact subgroup of $H^{2,2,2}_{q,r,s}$ then by Proposition~\ref{prop:cocpt_subgp_fi} the edge indexed graph $(\tilde \Ag,\tilde i)$ corresponding to the action of $H$ on $T^{2,2,2}_{q,r,s}$ is cycle of length $3m$ for some natural number $m \geq 1$ and with edges indexed by $2,2,\ldots,2$ in one direction and by $p_0,p_1,\ldots, p_{3m-1}$ in the other. Without loss of generality, we can assume that $T$ has no vertex of degree $1$. Let $(\Bg,j)$ be the edge-indexed graph corresponding to the quotient $H \backslash T$. By Theorem~\ref{theorem:deformation} and Remark~\ref{remark:essential_deformation} the edge-indexed graph $(\Bg,j)$ is obtained from $(\tilde \Ag, \tilde i)$ by a sequence of collapses and expansions between essential edge-indexed graphs. Since $2,q,r,s$ are prime, one observes that any graph in the sequence is obtained from $(\tilde \Ag, \tilde i)$ by expanding a subset of the vertices to $1-1$ edges like in Figure~\ref{fig:blowup_vertex}.
		\begin{figure}[ht]
			\begin{center} \[ \begin{xy} \xymatrix{  & &  \ar@{-}[ld]^>{p_l}^<{2} \\ & \bullet \ar@{-}[lu]^<{2}^>{p_{l-1}} }\end{xy} 
		 \Rightarrow 
		 \begin{xy}  \xymatrix{  & & &  \ar@{-}[ld]^>{p_l}^<{2} \\ & \bullet \ar@{-}[lu]^<{2}^>{p_{l-1}}  & \bullet \ar@{-}[l]^<{1}^>{1}  }\end{xy} \]
		 	\end{center}
			\caption{Expanding a vertex of $(\tilde \Ag, \tilde i)$}
			\label{fig:blowup_vertex}
		\end{figure}
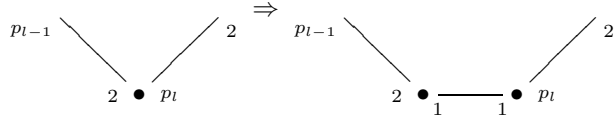
		
		 The graph $(\Bg,j)$ locally looks like the left or the right side of Figure~\ref{fig:blowup_vertex}. Thus any vertex of $T$ has degree $q + 1, r+1, s+1, q+2, r+2, s+2$ or $3$. Remark that these integers are all pairwise unequal since $q,r,s$ are pairwise distinct odd primes. We color the vertices of $T$ with the colors $\{0,1,2\}$ according to the following rule: the vertex $v$ is of color $l \in \{0,1,2\}$ if one of the following happens:
		\begin{enumerate}
		 	\item $v$ has degree $p_l +1$ or  $p_l +2$
			\item $v$ has degree $3$ and is adjacent to exactly one vertex of degree $p_l + 1$.
		\end{enumerate}
		Remark that the above rule detemines $l$ uniquely; any monochromatic subtree of $T$ either has $1$ or $2$ vertices; colors are $\Aut(T)$-invariant. Let $T'$ be the tree obtained by collapsing monochromic subtrees to (colored) vertices. Observe finally that any vertex of color $l \in \{0,1,2\}$ is adjacent to $p_l$ vertices of color $l+1$, to $2$ vertices of color $l-1$ and to no other vertex of color $l$. Thus in view of Remark~\ref{remark:recognition} the tree $T'$ is isomorphic to $T^{2,2,2}_{q,r,s}$ and the collapse defines a copci $\Aut(T) \to \Aut(T') = \Aut(T^{2,2,2}_{q,r,s})$.
		\end{proof}
		
		\begin{proof}[Proof of Theorem~\ref{theorem:key_example}\eqref{theorem:key_example_minimal}] Let $H^{2,2,2}_{q,r,s} \nwarrow H \nearrow G' \nwarrow G$ be a commation. Let $T$ be a geometric $G'$-tree. Then $H$ also acts geometrically on $T$, and we have a commation $H^{2,2,2}_{q,r,s} \nwarrow H \nearrow \Aut(T) \nwarrow G$.  Recall that the kernel of the action of $H$ on $T$ equals the unique maximal compact normal subgroup $\mathsf{W}(H)$ of $H$. 		Hence up to replacing $H$ by $H / \mathsf{W}(H)$ we can assume $H$ embeds as a cocompact subgroup of $H^{2,2,2}_{q,r,s}$. Applying Proposition~\ref{prop:rigidity_cocpt_subgroup} we see there is a commation \[H^{2,2,2}_{q,r,s} \nearrow \Aut(T^{2,2,2}_{q,r,s}) \nwarrow \Aut(T) \nwarrow G\] as desired.
		\end{proof}
		
		\subsection{Existence of primes not in $\mathcal S_{2,N}$} \label{sec:primes_not_in_s_2_N}
	In order to make sure that for every $N$ the statement of Theorem~\ref{theorem:key_example}\eqref{theorem:key_example_rigidity} is not empty, we establish the existence of infinitely many primes not in $\mathcal S_{2,N}$.
	
	Given natural numbers $N\geq 1$ and $d\geq 1$ we define \[\Pi_N = \left\{\prod_{n = 1}^N n^{k_n} \mid k_n \in \NN \text{ for each } n\right\};\]\[ \Sigma_{d,N} = \left\{ m_1 + \ldots + m_d \mid m_i \in \Pi_N\right\}\] and \[ \mathcal S_{d,N} = \bigcup_{i=1}^d \Sigma_{i,N}.\] 	\begin{prop} \label{prop:finite_series} For any natural numbers $N,d \geq 1$ 
	\[ \sum_{m \in \Sigma_{d,N}} \frac{1}{m} < + \infty.\]
	\end{prop}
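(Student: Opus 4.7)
The plan is to estimate the sum over the set $\Sigma_{d,N}$ by the easier sum over the $d$-fold Cartesian product $\Pi_N^d$ with multiplicities, to break the denominator $m_1+\cdots+m_d$ into a product via the AM--GM inequality, and finally to recognize the resulting one-variable sum as a finite Euler product.

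First I would note that $\Pi_N$ coincides with the monoid of $N$-smooth positive integers: by unique factorization, every element of $\Pi_N$ admits a unique representation as $\prod_{p \leq N} p^{\ell_p}$ where the product is over primes $p \leq N$ and $\ell_p \in \NN$. This is because each $n \in \{1,\ldots,N\}$ factors into primes at most $N$, and conversely every product of such primes belongs to $\Pi_N$.

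Next, since $\Sigma_{d,N}$ is a set while the $d$-tuples of elements of $\Pi_N$ give each element of $\Sigma_{d,N}$ at least once, one has the crude bound
\[ \sum_{m \in \Sigma_{d,N}} \frac{1}{m} \;\leq\; \sum_{(m_1,\ldots,m_d) \in \Pi_N^d} \frac{1}{m_1 + \cdots + m_d}. \]
Applying the AM--GM inequality $m_1 + \cdots + m_d \geq d\,(m_1 \cdots m_d)^{1/d}$ decouples the variables:
\[ \sum_{(m_1,\ldots,m_d) \in \Pi_N^d} \frac{1}{m_1 + \cdots + m_d} \;\leq\; \frac{1}{d}\sum_{(m_1,\ldots,m_d) \in \Pi_N^d} \frac{1}{(m_1 \cdots m_d)^{1/d}} \;=\; \frac{1}{d}\left(\sum_{m \in \Pi_N} \frac{1}{m^{1/d}}\right)^{\!d}. \]

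Finally, using the unique-factorization description of $\Pi_N$ and Tonelli's theorem for nonnegative series, the one-variable sum factors as an Euler product
\[ \sum_{m \in \Pi_N} \frac{1}{m^{1/d}} \;=\; \prod_{\substack{p \leq N \\ p\text{ prime}}} \sum_{k=0}^{\infty} p^{-k/d} \;=\; \prod_{\substack{p \leq N \\ p\text{ prime}}} \frac{1}{1 - p^{-1/d}}, \]
which is finite since $p \geq 2$ ensures $p^{-1/d} < 1$ for each factor and there are only finitely many primes $\leq N$. This establishes the convergence. The only delicate point is the application of AM--GM to decouple the summation; once the variables are separated, the rest amounts to a standard Euler product calculation and presents no real obstacle.
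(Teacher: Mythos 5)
Your proof is correct and follows essentially the same route as the paper's: bound the sum over $\Sigma_{d,N}$ by the sum over $d$-tuples from $\Pi_N$, apply AM--GM to decouple the variables, and evaluate the resulting single sum as a finite Euler product over primes $\leq N$. The only cosmetic difference is that you phrase the final rearrangement via Tonelli, whereas the paper simply iterates the geometric series; this changes nothing of substance.
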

	\begin{proof}
		We first note that by the Prime Factorization Theorem,
		\[ \Pi_N = \left\{\prod_{n = 1}^s p_s^{k_s} \mid k_s \in \NN \text{ for each } s\right\}\] where $\{p_1, \ldots, p_s\}$ is the set of primes in the interval $[1, N]$.
		
		By the Arithmetic Mean-Geometric Mean Inequality,
		\begin{align*} 
			\sum_{m \in \Sigma_{d,N}} \frac{1}{m} 
				&\leq \sum_{m_1 \in \Pi_N} \cdots \sum_{m_d \in \Pi_N} \frac{1}{m_1 + \cdots + m_d}\\
				& \leq \frac{1}{d} \sum_{m_1 \in \Pi_N} \cdots \sum_{m_d \in \Pi_N} \frac{1}{(m_1 \cdots m_d)^{1/d}} \\
				& \leq \frac{1}{d}\left( \sum_{m_1 \in \Pi_N} \frac{1}{m_1^{1/d}} \right)^d.
		\end{align*}
		Finally, since $p_i^{1/d} > 1$ it follows that
		\begin{align*}  \sum_{m_1 \in \Pi_N} \frac{1}{m_1^{1/d}} &= \sum_{k_1=0}^\infty \cdots \sum_{k_s=0}^\infty \frac{1}{\prod_{i=1}^s(p_i^{1/d})^{k_i}}\\ &= \prod_{i=1}^s \sum_{k_i=0}^\infty \frac{1}{(p_i^{1/d})^{k_i}}\\ &= \prod_{i=1}^s \frac{1}{1-p_i^{-1/d}} < + \infty. \qedhere
		\end{align*}
	\end{proof}
	Using Proposition~\ref{prop:finite_series} and the fact that
	\[\sum_{p \text{ prime}} \frac{1}{p} = +\infty\] we have the following
	\begin{corollary} \label{corollary:primes_not_in_S2N} For any natural numbers $N,d \geq 1$ there are infinitely many primes not in $\mathcal S_{d,N}$. \hfill \qed
	\end{corollary}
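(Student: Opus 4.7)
The plan is to combine Proposition~\ref{prop:finite_series} with the classical divergence of $\sum_{p \text{ prime}} 1/p$ via a simple contradiction argument. First I would note that since $\mathcal S_{d,N} = \bigcup_{i=1}^d \Sigma_{i,N}$ is a \emph{finite} union, we obtain
\[ \sum_{m \in \mathcal S_{d,N}} \frac{1}{m} \leq \sum_{i=1}^{d} \sum_{m \in \Sigma_{i,N}} \frac{1}{m} < + \infty, \]
where each summand on the right is finite by Proposition~\ref{prop:finite_series}.

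Next, assume for contradiction that only finitely many primes lie outside $\mathcal S_{d,N}$. Call this finite set of exceptional primes $E$. Then every prime not in $E$ belongs to $\mathcal S_{d,N}$, so
\[ \sum_{p \text{ prime}, \, p \notin E} \frac{1}{p} \leq \sum_{m \in \mathcal S_{d,N}} \frac{1}{m} < + \infty. \]
Since $E$ is finite, removing finitely many terms from a divergent series leaves a divergent series, so the left-hand side equals $\sum_{p \text{ prime}} 1/p - \sum_{p \in E} 1/p = +\infty$, a contradiction. Hence infinitely many primes avoid $\mathcal S_{d,N}$.

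There is no real obstacle here: the only slightly nontrivial input is Proposition~\ref{prop:finite_series}, which has already been proved, and Euler's divergence of $\sum 1/p$, which is classical. The argument is therefore a one-line application of both.
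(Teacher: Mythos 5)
Your argument is exactly the one the paper intends: the corollary is stated immediately after Proposition~\ref{prop:finite_series} with the remark that it follows from that proposition together with the divergence of $\sum_{p\text{ prime}} 1/p$, which is precisely what you spell out. Your write-up is correct and simply fills in the routine details of the comparison argument.
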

	
	\section{Attaining upper bounds} \label{sec:lower_bounds}
		
		In this section, we collect consequences of Theorem~\ref{theorem:key_example}, which in turn imply Theorem~\ref{theorem:lower_bounds}.
				
	\begin{corollary} \label{corollary:cor_key_example} Let $q,r,s \geq 11$ be pairwise distinct primes. 
		\begin{enumerate}
			\item \label{corollary:cor_key_minimal} If $G' \in \Tree$ is at commability distance at least $3$ from $H^{2,2,2}_{q,r,s}$ then any commation of shortest length is of the form $H^{2,2,2}_{q,r,s} \nearrow \nwarrow \ldots G'$.
			\item \label{corollary:cor_key_minimal_tree} Moreover, there is a commation of the same length that starts by $H^{2,2,2}_{q,r,s} \nearrow \Aut(T^{2,2,2}_{q,r,s}) \nwarrow \ldots G'$.
			\item \label{corollary:cor_key_distance} If $p,q,r$ are not in $\mathcal S_{2,N}$ and $G' \in \Tree$ acts geometrically on a tree with degree bounded above by $N$, then $G'$ is at distance at least $3$ from $H^{2,2,2}_{q,r,s}$.
		\end{enumerate}
	\end{corollary}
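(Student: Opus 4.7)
The plan is to deduce all three parts from Theorem~\ref{theorem:key_example}, combined with two elementary observations about commations: two consecutive copci arrows pointing in the same direction may be composed into a single arrow (so that in a shortest commation the directions must alternate), and the identity map $G \to G$ is a copci that can be regarded as $\nearrow$ or $\nwarrow$, allowing one to pad a commation with trivial arrows at its endpoints.

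For part~\eqref{corollary:cor_key_minimal}, I would take any shortest commation $H^{2,2,2}_{q,r,s} = G_0 - G_1 - \cdots - G_d = G'$ of length $d \geq 3$. Alternation of arrows is forced by the composition observation. If the first arrow pointed inward, alternation would make the first three arrows take the shape $\nwarrow \nearrow \nwarrow$, and Theorem~\ref{theorem:key_example}\eqref{theorem:key_example_minimal} (applied with $G = G_3$) would replace this initial segment by the length-$2$ segment $H^{2,2,2}_{q,r,s} \nearrow \Aut(T^{2,2,2}_{q,r,s}) \nwarrow G_3$, producing a commation to $G'$ of length $d-1$ and contradicting minimality. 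Thus the first arrow must point outward, and alternation forces the second arrow to point inward, giving the required shape $H^{2,2,2}_{q,r,s} \nearrow \nwarrow \cdots G'$.

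For part~\eqref{corollary:cor_key_minimal_tree}, starting from a shortest commation $H^{2,2,2}_{q,r,s} \nearrow G_1 \nwarrow G_2 - \cdots - G'$ provided by part~\eqref{corollary:cor_key_minimal}, I would rewrite its first two arrows as $H^{2,2,2}_{q,r,s} \nearrow \Aut(T^{2,2,2}_{q,r,s}) \nwarrow G_2$ without changing the total length. Pick a geometric $G_1$-tree $T$; both $H^{2,2,2}_{q,r,s}$ and $G_2$ then act geometrically on $T$ via the copcis into $G_1$. Applying Proposition~\ref{prop:rigidity_cocpt_subgroup} to the cocompact subgroup $H = H^{2,2,2}_{q,r,s}$ of itself yields a copci $\Aut(T) \to \Aut(T^{2,2,2}_{q,r,s})$, and composing with the copci $G_2 \to \Aut(T)$ arising from the geometric action of $G_2$ on $T$ produces the required copci $G_2 \to \Aut(T^{2,2,2}_{q,r,s})$.

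For part~\eqref{corollary:cor_key_distance}, I would argue by contradiction: assume the commability distance from $H^{2,2,2}_{q,r,s}$ to $G'$ is at most $2$. By padding with identity arrows, every commation of length at most $2$ between them can be extended to one of the form $H^{2,2,2}_{q,r,s} \nwarrow \nearrow \nwarrow G'$; for example, $H^{2,2,2}_{q,r,s} \nearrow G_1 \nwarrow G'$ is extended to $H^{2,2,2}_{q,r,s} \nwarrow H^{2,2,2}_{q,r,s} \nearrow G_1 \nwarrow G'$, and the few other direction patterns are handled analogously. Then Theorem~\ref{theorem:key_example}\eqref{theorem:key_example_rigidity} forces $G'$ not to act geometrically on any tree of degree at most $N$, contradicting the hypothesis. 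The most delicate step is the invocation of Proposition~\ref{prop:rigidity_cocpt_subgroup} in part~\eqref{corollary:cor_key_minimal_tree}; the remaining arguments are purely combinatorial bookkeeping on commations.
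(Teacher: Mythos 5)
Your proof is correct and follows essentially the same route as the paper. The only cosmetic difference is in part~\eqref{corollary:cor_key_minimal_tree}: the paper prepends an identity arrow to the shortest commation and then applies Theorem~\ref{theorem:key_example}\eqref{theorem:key_example_minimal} verbatim, whereas you unwind that theorem's proof and invoke Proposition~\ref{prop:rigidity_cocpt_subgroup} directly with $H = H^{2,2,2}_{q,r,s}$ (which is valid, since $H^{2,2,2}_{q,r,s}$ has trivial maximal compact normal subgroup and hence embeds as a cocompact subgroup via the first copci) — these are the same argument.
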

	\begin{proof}
		\begin{enumerate}
			\item Clearly directions of arrows must alternate in a commation of shortest length. Suppose a commation of shortest length is of the form $H^{2,2,2}_{q,r,s} \nwarrow \nearrow \nwarrow G \ldots G'$. Then by Theorem~\ref{theorem:key_example}\eqref{theorem:key_example_minimal} there is a commation of the form $H^{2,2,2}_{q,r,s}\nearrow \nwarrow G \ldots G'$, contradicting the minimality of the commation.
			\item Start with a commation of shortest length $H^{2,2,2}_{q,r,s}\nearrow \nwarrow G \ldots G'$.  Adding an isomorphism as the first arrow $H^{2,2,2}_{q,r,s} \nwarrow H^{2,2,2}_{q,r,s}\nearrow \nwarrow G \ldots G'$ allows to apply Theorem~\ref{theorem:key_example}\eqref{theorem:key_example_minimal} to the first three arrows, so that there is a commation $H^{2,2,2}_{q,r,s} \nearrow \Aut(T^{2,2,2}_{q,r,s}) \nwarrow G \ldots G'$.
			\item If $G'$ is at distance at most $2$ from $H^{2,2,2}_{q,r,s}$, then adding one or more isomorphisms either at the beginning or at the end of a shortest commation produces a commation $H^{2,2,2}_{q,r,s} \nwarrow \nearrow \nwarrow G'$. But by Theorem~\ref{theorem:key_example}\eqref{theorem:key_example_rigidity} the group $G'$ cannot act geometrically on a tree of degree bounded above by $N$, contrary to the hypothesis.\qedhere
		\end{enumerate}
	\end{proof}
	\begin{corollary}\label{corollary:lower_bound_diameter} Let $q,r,s \geq 11$ be pairwise distince primes, let $N = \max\{q,r,s\}+2$ and let $q',r',s' (\geq 11)$ be pairwise distinct primes not in $\mathcal S_{2,N}$. Then there is no commation of length less than $6$ from $H^{2,2,2}_{q,r,s}$ to $H^{2,2,2}_{q',r',s'}$.
	\end{corollary}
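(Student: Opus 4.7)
The plan is to show that the commability distance $k$ from $H^{2,2,2}_{q,r,s}$ to $H^{2,2,2}_{q',r',s'}$ is at least $6$, via (i) a base case $k \geq 3$, (ii) a parity argument ruling out odd $k$, and (iii) a rigidity contradiction ruling out $k = 4$.

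For (i), I would apply Corollary~\ref{corollary:cor_key_example}\eqref{corollary:cor_key_distance} with the roles of $(q,r,s)$ and $G'$ there played by $(q',r',s')$ and $H^{2,2,2}_{q,r,s}$ respectively: since $H^{2,2,2}_{q,r,s}$ acts geometrically on $T^{2,2,2}_{q,r,s}$, whose vertex degrees are all at most $N = \max\{q,r,s\}+2$, and since $q',r',s' \notin \mathcal{S}_{2,N}$, we obtain $k \geq 3$. For (ii), Corollary~\ref{corollary:cor_key_example}\eqref{corollary:cor_key_minimal} then applies at both endpoints, forcing any shortest commation to begin with a $\nearrow$ at either end. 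Combined with the standard fact that a shortest commation alternates in direction (otherwise two consecutive copcis in the same direction compose to a single copci), this forces $k$ to be even, so $k \in \{4, 6, 8, \ldots\}$.

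The main case to rule out is $k = 4$. Assuming $k = 4$, I would invoke Corollary~\ref{corollary:cor_key_example}\eqref{corollary:cor_key_minimal_tree} successively at the two endpoints, using that the construction of that corollary only rewrites the first two arrows of the commation and leaves the remaining tail unchanged. Applying it first at the $H^{2,2,2}_{q,r,s}$ endpoint places $\Aut(T^{2,2,2}_{q,r,s})$ in position one; reversing the commation and applying it again at the $H^{2,2,2}_{q',r',s'}$ endpoint places $\Aut(T^{2,2,2}_{q',r',s'})$ in the symmetric position without disturbing the previously installed $\Aut(T^{2,2,2}_{q,r,s})$. The outcome is a length-$4$ commation
\[ H^{2,2,2}_{q,r,s} \nearrow \Aut(T^{2,2,2}_{q,r,s}) \nwarrow G \nearrow \Aut(T^{2,2,2}_{q',r',s'}) \nwarrow H^{2,2,2}_{q',r',s'} \]
for some locally compact group $G$. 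In particular, $G$ admits a copci into $\Aut(T^{2,2,2}_{q,r,s})$, so $G$ acts geometrically on $T^{2,2,2}_{q,r,s}$, a tree all of whose vertex degrees are bounded by $N$.

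The contradiction comes from applying the rigidity half, Theorem~\ref{theorem:key_example}\eqref{theorem:key_example_rigidity}, but at the $H^{2,2,2}_{q',r',s'}$ endpoint rather than the $H^{2,2,2}_{q,r,s}$ one. Prepending the identity isomorphism of $H^{2,2,2}_{q',r',s'}$ to the right half of the above commation produces a length-$3$ commation $H^{2,2,2}_{q',r',s'} \nwarrow \nearrow \nwarrow G$ of exactly the form to which Theorem~\ref{theorem:key_example}\eqref{theorem:key_example_rigidity} applies, since $q',r',s'\geq 11$ are pairwise distinct primes not in $\mathcal{S}_{2,N}$. That theorem then forbids $G$ from acting geometrically on any tree of degree $\leq N$, contradicting the previous paragraph. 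The point I expect to be the main obstacle is noticing that one cannot invoke the rigidity symmetrically at the $H^{2,2,2}_{q,r,s}$ side, because $q,r,s$ themselves do lie in $\mathcal{S}_{2,N}$; it is essential to exploit the asymmetry in hypotheses between the two endpoints.
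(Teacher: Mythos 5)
Your proposal is correct and follows essentially the same route as the paper's proof: establish distance $\geq 3$ via Corollary~\ref{corollary:cor_key_example}\eqref{corollary:cor_key_distance}, force the commation to be even via \eqref{corollary:cor_key_minimal} and alternation, and rule out distance $4$ by using \eqref{corollary:cor_key_minimal_tree} to make the middle group $G$ act geometrically on $T^{2,2,2}_{q,r,s}$ (degree bounded by $N$) and then invoking rigidity at the $(q',r',s')$ endpoint. Your second application of \eqref{corollary:cor_key_minimal_tree} to install $\Aut(T^{2,2,2}_{q',r',s'})$ on the right is harmless but unnecessary: once $G$ is known to act on $T^{2,2,2}_{q,r,s}$, the paper simply applies \eqref{corollary:cor_key_distance} to the remaining tail $G \nearrow \nwarrow H^{2,2,2}_{q',r',s'}$, which is exactly the contradiction you extract from Theorem~\ref{theorem:key_example}\eqref{theorem:key_example_rigidity}.
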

	\begin{proof} Since $H^{2,2,2}_{p,q,r}$ acts on a tree with degree bounded by $N$, it follows from Corollary~\ref{corollary:cor_key_example}\eqref{corollary:cor_key_distance} that $H^{2,2,2}_{q,r,s}$ and $H^{2,2,2}_{q',r',s'}$ are at distance at least $3$. By Corollary~\ref{corollary:cor_key_example}\eqref{corollary:cor_key_minimal} any shortest commation is of the form $\nearrow \nwarrow \ldots \nearrow \nwarrow$ and in particular is of even length. It therefore remains to rule out the possibility that $H^{2,2,2}_{q,r,s}$ and $H^{2,2,2}_{q',r',s'}$ are at distance $4$. If this is the case, then by Corollary~\ref{corollary:cor_key_example}\eqref{corollary:cor_key_minimal_tree} there is a commation \[H^{2,2,2}_{q,r,s} \nearrow \Aut(T^{2,2,2}_{q,r,s}) \nwarrow G \nearrow \nwarrow H^{2,2,2}_{q',r',s'}.\] But this means that $G$ acts geometrically on the tree $T^{2,2,2}_{q,r,s}$ in contradiction with Corollary~\ref{corollary:cor_key_example}\eqref{corollary:cor_key_distance}.
	\end{proof}

		\begin{corollary} \label{corollary:lower_bound_radius}Let $G \in \Tree$ and let $T$ be a geometric $G$-tree. Let $N$ be the maximal degree of a vertex of $T$, and let $q,r,s$ be primes not in $\mathcal S_{2,N}$. Then $G$ is at distance at least $4$ from $H^{2,2,2}_{q,r,s}$.
		\end{corollary}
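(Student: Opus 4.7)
The plan is to argue by contradiction, using each of the three parts of Corollary~\ref{corollary:cor_key_example} in turn. Suppose $G$ is at commability distance at most $3$ from $H^{2,2,2}_{q,r,s}$. Since $G$ acts geometrically on $T$ and every vertex of $T$ has degree at most $N$, while the primes $q,r,s$ are chosen outside $\mathcal S_{2,N}$, Corollary~\ref{corollary:cor_key_example}\eqref{corollary:cor_key_distance} forces the distance to be at least $3$. So the distance is exactly $3$.

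By Corollary~\ref{corollary:cor_key_example}\eqref{corollary:cor_key_minimal_tree} there is then a commation of length $3$ of the form
\[ H^{2,2,2}_{q,r,s} \nearrow \Aut(T^{2,2,2}_{q,r,s}) \nwarrow G' \nearrow G \]
for some l.c.\ group $G'$ (the alternation of arrows being dictated by minimality via Corollary~\ref{corollary:cor_key_example}\eqref{corollary:cor_key_minimal}). The last arrow is a copci homomorphism $G' \to G$, so composing with the geometric action of $G$ on $T$ yields a geometric action of $G'$ on $T$; in particular $G'$ acts geometrically on a tree whose vertex degrees are bounded above by $N$.

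On the other hand, the initial two arrows exhibit $G'$ at commability distance at most $2$ from $H^{2,2,2}_{q,r,s}$. Applying the contrapositive of Corollary~\ref{corollary:cor_key_example}\eqref{corollary:cor_key_distance} to $G'$ (again using $q,r,s \notin \mathcal S_{2,N}$) says precisely that $G'$ cannot act geometrically on any tree with degree bounded above by $N$, contradicting the previous paragraph. Hence $G$ must be at commability distance at least $4$ from $H^{2,2,2}_{q,r,s}$.

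No step should be a serious obstacle: the whole argument is a short bookkeeping exercise with arrow directions, and all of the real work is absorbed into Theorem~\ref{theorem:key_example} and the three parts of Corollary~\ref{corollary:cor_key_example}. The only thing to be careful about is to check that the copci $G' \to G$ really does transfer the geometric action on $T$ to $G'$, which is standard since being copci is the group-theoretic counterpart to acting geometrically on the same space.
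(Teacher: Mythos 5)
Your argument is correct and essentially identical to the paper's: establish distance $\geq 3$ via Corollary~\ref{corollary:cor_key_example}\eqref{corollary:cor_key_distance}, assume distance exactly $3$, extract the shape $\nearrow\nwarrow\nearrow$ of a minimal commation, note that the final copci $G' \nearrow G$ transports the geometric action on $T$ to $G'$, and derive a contradiction from Corollary~\ref{corollary:cor_key_example}\eqref{corollary:cor_key_distance} applied to $G'$. The invocation of Corollary~\ref{corollary:cor_key_example}\eqref{corollary:cor_key_minimal_tree} to pin down the middle group as $\Aut(T^{2,2,2}_{q,r,s})$ is harmless but unnecessary---part~\eqref{corollary:cor_key_minimal} alone already gives the arrow pattern you use.
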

		\begin{proof} Observe first that $G$ is at distance at least $3$ from $H^{2,2,2}_{q,r,s}$ by Corollary~\ref{corollary:cor_key_example}\eqref{corollary:cor_key_distance}. Thus we assume by contradiction that $G$ is at distance exactly $3$ from $H^{2,2,2}_{q,r,s}$. By Corollary~\ref{corollary:cor_key_example}\eqref{corollary:cor_key_minimal} there is a commation $H^{2,2,2}_{q,r,s} \nearrow \nwarrow G' \nearrow G$. But since $G$ acts geometrically on a tree of degree bounded above by $N$, so does $G'$. This contradicts Corollary~\ref{corollary:cor_key_example}\eqref{corollary:cor_key_distance}.
		\end{proof}
		
		\begin{corollary}\label{corollary:lower_bound_unimod} Let $U \in \Tree$ be a unimodular group and $N_U \in \NN$ be as in Proposition~\ref{prop:bound_on_degree}. For pairwise distinct primes $q,r,s \geq 11$ not in $\mathcal S_{2,N_U}$ the group $H^{2,2,2}_{q,r,s}$ is at distance at least $5$ from $U$.
	\end{corollary}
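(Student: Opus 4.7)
The plan is to argue by contradiction, building on Corollary~\ref{corollary:lower_bound_radius}. That result already gives $d(H^{2,2,2}_{q,r,s}, U) \geq 4$, since $U$ acts geometrically on a tree of degree at most $N_U$ by Proposition~\ref{prop:bound_on_degree}, and $q,r,s \notin \mathcal S_{2,N_U}$. I would assume for contradiction that the distance equals $4$. By Corollary~\ref{corollary:cor_key_example}\eqref{corollary:cor_key_minimal} the arrows of a shortest commation must alternate, and by \eqref{corollary:cor_key_minimal_tree} it may be chosen to start with $H^{2,2,2}_{q,r,s} \nearrow \Aut(T^{2,2,2}_{q,r,s}) \nwarrow$. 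Hence we may write the commation in the form
\[ H^{2,2,2}_{q,r,s} \nearrow \Aut(T^{2,2,2}_{q,r,s}) \nwarrow G \nearrow G' \nwarrow U. \]

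The crucial new step is to show that $G'$ is unimodular. Since $U \hookrightarrow G'$ is copci, the modular function of $G'$ satisfies $\Delta^{G'}|_U = \Delta^U$, which is trivial by unimodularity of $U$. Thus the closed normal subgroup $\ker\Delta^{G'}$ contains the cocompact subgroup $U$ and is itself cocompact in $G'$, so the quotient $G'/\ker\Delta^{G'}$ is compact. The map induced by $\Delta^{G'}$ embeds this quotient continuously and injectively into $\RR^\times$, and a continuous injection from a compact group into a Hausdorff group is a homeomorphism onto its image, so the image is a compact subgroup of $\RR^\times$. But the image lies in $\RR^\times_+$, whose only compact subgroup is trivial, so $\Delta^{G'} \equiv 1$ and $G'$ is unimodular.

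Granting unimodularity, I would re-run the proof of Proposition~\ref{prop:bound_on_degree} for $G'$ using the specific discrete free cocompact subgroup $\Gamma \cong F_n$ of $U$ provided by Bass--Kulkarni (so that $N_U = 2n$). Via the composition $\Gamma \hookrightarrow U \hookrightarrow G'$ the subgroup $\Gamma$ is also discrete, free and cocompact in $G'$, and the Euler-characteristic argument then bounds the vertex degrees of any geometric minimal $G'$-tree by $2n = N_U$. Restricting such a tree action to $G$ via $G \hookrightarrow G'$ yields a geometric action of $G$ on a tree of degree at most $N_U$, so Corollary~\ref{corollary:cor_key_example}\eqref{corollary:cor_key_distance} forces $d(H^{2,2,2}_{q,r,s}, G) \geq 3$. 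This contradicts the sub-commation $H^{2,2,2}_{q,r,s} \nearrow \Aut(T^{2,2,2}_{q,r,s}) \nwarrow G$ of length $2$, completing the argument.

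The main obstacle is the unimodularity argument for $G'$: passing a unimodularity hypothesis along a copci inclusion is not formal, and the Haar-measure computation above is the one genuinely new ingredient. Once it is in place, the rest is an assembly of results already established in the paper.
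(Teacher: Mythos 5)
Your proposal is correct and follows essentially the same route as the paper: produce the alternating commation $H^{2,2,2}_{q,r,s} \nearrow \cdot \nwarrow G \nearrow G' \nwarrow U$, bound the vertex degrees of a minimal geometric $G'$-tree by $N_U$ using the structure of $U$, and contradict Corollary~\ref{corollary:cor_key_example}\eqref{corollary:cor_key_distance} applied to $G$, which sits at distance $2$ from $H^{2,2,2}_{q,r,s}$. The only difference is local and makes your argument longer than necessary: the paper bypasses both the unimodularity computation for $G'$ and the transplanting of the lattice $\Gamma$ by simply observing that a minimal geometric $G'$-tree is automatically a minimal geometric $U$-tree (via the copci $U \to G'$), so Proposition~\ref{prop:bound_on_degree} applied to $U$ already gives the degree bound $N_U$.
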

		\begin{proof} Recall from Proposition~\ref{prop:bound_on_degree} that there is a bound $N$ on the degree of any vertex of any minimal geometric $U$-tree. 
		By Corollary~\ref{corollary:cor_key_example}\eqref{corollary:cor_key_distance} the groups $U$ and $H^{2,2,2}_{q,r,s}$ are at distance at least $3$. Seeking a contradiction, we suppose that they are at distance at most $4$. Using Corollary~\ref{corollary:cor_key_example}\eqref{corollary:cor_key_minimal}
		we find a commation \[ H^{2,2,2}_{q,r,s} \nearrow \nwarrow G \nearrow G' \nwarrow U.\]
		
		Let $T$ be a minimal geometric $G'$-tree. In particular $T$ is a geometric minimal $U$-tree. Hence every vertex of $T$ has degree bounded by $N$ by definition of $N$. But Corollary~\ref{corollary:cor_key_example}\eqref{corollary:cor_key_distance} asserts that $G$ cannot act geometrically on $T$, which is a contradiction.
		\end{proof}

	\bibliographystyle{amsalpha}
	\bibliography{biblio}
	
\end{document}